\newtheorem{lemma}{Lemma}[section]
\newtheorem{prop}{Proposition}[section]
\newtheorem{teo}{Theorem}[section]
\newtheorem{remark}{Remark}[section]
\def\zz{\mathbb{Z}}
\def\rr{\mathbb{R}}
\def\cn{\mathbb{C}}
\def\rr{\mathbb{R}}
\def\what{\widehat}
\def\eps{\varepsilon}
\def\tq{\tilde q}
\def\tr{\tilde r}
\def\th{{\bf T}_h}
\def\tph{{\bf T}_{4h}}
\def\eh{{\bf E}_h}
\def\sg{\sigma}
\def\tuh{\tilde{u}_h}
\def\bpi{\boldsymbol{\Pi}}
\def\err{{\rm err}_h}
\def\errlin{{\rm err}_h^{lin}}
\def\errnon{{\rm err}_h^{non}}
\def\hat{\what}
\def\tvarphi{\tilde \varphi}
\def\tu{\tilde u}
\title[Conv. rates for dispersive approximations]{Convergence rates for dispersive approximation schemes to nonlinear Schr\"odinger
equations}
\author[L. I. Ignat and E.  Zuazua]
{Liviu I. Ignat \and Enrique Zuazua}
\address{L. I. Ignat
\hfill\break\indent Institute of Mathematics ``Simion Stoilow'' of the Romanian Academy\\21 Calea Grivitei Street \\010702 Bucharest \\ Romania 
\hfill\break\indent \and
\hfill\break\indent 
 BCAM - Basque Center for Applied Mathematics,\\
Bizkaia Technology Park, Building 500 Derio, Basque Country, Spain.}
 \email{{\tt
liviu.ignat@gmail.com}\hfill\break\indent  {\it Web page: }{\tt
http://www.imar.ro/\~\,lignat}}
\address{E. Zuazua
\hfill\break\indent  Ikerbasque, Basque Foundation for Science,\\
 Alameda Urquijo 36-5, Plaza Bizkaia, 48011, Bilbao, Basque Country, Spain
\hfill\break\indent \and
\hfill\break\indent 
 BCAM - Basque Center for Applied Mathematics,\\
Bizkaia Technology Park, Building 500 Derio, Basque Country, Spain.}
  \email{{\tt zuazua@bcamath.org }
\hfill\break\indent {\it Web page: }{\tt http://www.bcamath.org/zuazua/}}
\begin{document}

\begin{abstract}{This article is devoted to the analysis of the convergence rates of several numerical approximation
schemes for  linear and nonlinear Schr\"odinger equations on the real line. Recently, the authors have introduced
viscous and two-grid numerical approximation schemes that mimic at the discrete level the so-called Strichartz dispersive estimates
of the continuous Schr\"odinger equation. This allows to guarantee the convergence of numerical approximations
for initial data in $L^2(\rr)$, a fact that
can not be proved in the nonlinear setting for standard conservative schemes unless more regularity of the initial data is assumed.
In the present article we obtain explicit  convergence rates and prove that dispersive schemes fulfilling the Strichartz estimates
are better behaved  for $H^s(\rr)$ data if $0<s<1/2$. Indeed, while dispersive schemes ensure a polynomial convergence rate, non-dispersive ones only yield logarithmic decay rates.}

\end{abstract}

\maketitle


\section{Introduction}
Let us consider the linear (LSE) and the nonlinear (NSE) Schr\"{o}dinger equations:
\begin{equation}\label{sch1}
\left\{\begin{array}{l}
  iu_t+\partial_x^2 u = 0,  \,x\in \rr,\,t\neq 0,\\
  u(0,x)  =\varphi(x), \,x\in \rr
    \end{array}\right.
\end{equation}
and
\begin{equation}\label{scha869}
\left\{
\begin{array}{l}
  iu_t+\partial_x^2 u = f(u),  \,x\in \rr,\,t\neq 0,\\
  u(0,x)  =\varphi(x),\, x\in \rr,
  \end{array}\right.
\end{equation}
respectively.

The linear equation \eqref{sch1} is solved by $u(x,t)=S(t)\varphi$, where
$S(t)=e^{it\Delta}$  is the free Schr\"{o}dinger operator and has two
important properties. First, the conservation of the $L^2$-norm
\begin{equation}\label{energy}
\|u(t)\|_{L^2(\rr)}=\|\varphi \|_{L^2(\rr)}
\end{equation}
which shows that it is in fact a group of isometries in $L^2(\rr)$,
 and a dispersive estimate of the form:
\begin{equation}\label{linfty}
|S(t)\varphi(x)|=|u(t,x)|\leq \frac 1{(4\pi |t|)^{1/2}}\|\varphi \|_{L^1(\rr)}, \ x\in
\rr,\ t\neq 0.
\end{equation}


The space-time estimate
\begin{equation}\label{l6l6}
    \|S(\cdot)\varphi\|_{L^{6}(\rr,\,L^{6}(\rr))}\leq C\|\varphi\|_{L^2(\rr)},
\end{equation}
 due to  Strichartz \cite{0372.35001}, guarantees  that the
solutions decay  as $t$ becomes large and that they gain some
spatial integrability. 

Inequality (\ref{l6l6}) was generalized by Ginibre and Velo \cite{MR801582}. They proved:
\begin{equation}\label{dsch0001}
    \|S(\cdot)\varphi\|_{L^q(\rr,\,L^r(\rr))}\leq C(q)\|\varphi\|_{L^2(\rr)}
\end{equation}
 for  the so-called $1/2$-admissible pairs $(q,r)$. We recall that the exponent
pair $(q,r)$ is  $\alpha$-admissible (cf. \cite{0922.35028}) if  $2\leq q,r\leq \infty$,
$(q,r,\alpha)\neq (2,\infty,1)$ and
\begin{equation}\label{adm}
\frac 1 {q}=\alpha\left(\frac 12 -\frac 1r\right).
\end{equation}
We see that \eqref{l6l6} is a particular instance of \eqref{dsch0001} in which $\alpha=1/2$ and
$q=r=6$.


The extension of these estimates to the inhomogeneous linear Schr\"{o}dinger equation is due to Yajima
\cite{MR891945} and Cazenave and Weissler \cite{MR952091}. These estimates can also be
extended to a larger class of equations for which the Laplacian is replaced by any
self-adjoint operator such that the $L^\infty$-norm of the fundamental solution behaves
like $t^{-1/2}$, \cite{0922.35028}.

The Strichartz estimates  play an important role in the proof of the well-posedness of
the nonlinear Schr\"odinger equation. Typically they are used for nonlinearities for which the energy methods
fail to provide well-posedness results. In this way, Tsutsumi \cite{0638.35021} proved the existence and uniqueness
for $L^2(\rr)$-initial data for power-like nonlinearities $F(u)=|u|^{p}u$, in the range of exponents
$0\leq p\leq 4$. More precisely it was proved that the NSE is globally well posed in $L^\infty(\rr,\,L^2(\rr))\cap
L^q_{loc}(\rr,\,L^r(\rr))$, where $(q,r)$ is a $1/2$-admissible pair depending  on the exponent
$p$. This result was complemented by Cazenave and Weissler \cite{MR1021011} who
proved the local existence
in the critical case $p=4 $. The case of $H^1$-solutions was analyzed by Baillon,
Cazenave and Figueira \cite{MR0433025}, Lin and Strauss \cite{MR515228}, Ginibre and Velo
\cite{MR498408, MR533218}, Cazenave \cite{MR559676}, and, in a more general context, by
Kato \cite{MR877998, MR1037322}.

This analysis has been extended to semi-discrete numerical schemes
for Schr\"odinger equations by Ignat and Zuazua in \cite{1063.35016}, \cite{liv3}, \cite{liv-siam}.
In these articles  it was first pointed out that conservative numerical schemes
 often fail to be dispersive, in the sense that numerical solutions do not fulfill the integrability properties above. This is due to the pathological behavior of high frequency spurious numerical solutions.
 Then  several numerical schemes were developed fulfilling the dispersive properties,
uniformly in the mesh-parameter. In the sequel these schemes will be referred to as being \emph{dispersive}. As proved in those articles these
schemes may be used in the nonlinear context  to prove convergence towards the solutions of the NSE,                 for the range of exponents $p$ and the functional setting
 above.
 The analysis of  fully discrete
schemes  was later developed  in \cite{liv-m3as} where
 necessary and sufficient conditions were given guaranteeing that the dispersive
properties of the continuous model are maintained uniformly with respect to the mesh-size parameters
 at the discrete level. The present paper is devoted to further analyze the convergence of these numerical schemes, the main goal being the obtention of  convergence rates.

Despite of the fact that non-dispersive schemes  (in the sense that they do not satisfy the discrete analogue of \eqref{l6l6})  can not be applied directly in the $L^2$-setting for nonlinear equations
one could still use them
 by first approximating the $L^2$-initial data by smooth ones. 
This paper is devoted to prove that, even if this is done, 
 dispersive schemes are better behaved than the non-dispersive
 ones 
 in what concerns the order of convergence for rough initial data.
 
The main results of the paper are as follows.
In Theorem \ref{est.lin} we prove that the error
 committed when the LSE is approximated by a dispersive numerical scheme in the  $L^q(0,T;\,l^r(h\zz))$-norms is of the same order as the one classical consistency+stability analysis yields.
Using the ideas of \cite{MR0461121}, Ch. ~6 we can also estimate the error in the $L^q(0,T;\,l^r(h\zz))$-norms, $r>2$, for  non-dispersive schemes; for
example for the classical three-point second order approximation  of the laplace operator. In this case,
in contrast with the good properties of dispersive schemes, for  $H^s(\rr)$-initial data with small $s$, $1/2-1/r\leq s\leq 4+1/2-1/r$, the  error losses a factor of order $h^{3/2(1/2-1/r)}$ with respect to the case $L^\infty (0,T;\,l^2(h\zz))$ which can be handled by classical energy methods  (see Example 1 in Section \ref{examples}).
Summarizing, we see that dispersive property of numerical schemes is needed to guarantee that the convergence rate of numerical solution is kept in the spaces $L^q(0,T;\,l^r(h\zz))$.

%

In the the context of the  NSE we prove that the dispersive methods introduced in this paper converge to the solutions of NSE with the same order as in  the linear problem.  
 To be more precise,  in Theorem \ref{main} we prove
 a  polynomial order of convergence, $h^{s/2}$, in the case of a dispersive
approximation scheme of order two for the laplace operator for initial data $H^s(\rr^d)$ when $0<s<4$.
In the case of the classical non-dispersive schemes this convergence rate can only be guaranteed for smooth enough initial data, $H^s(\rr)$, $1/2<s<4$ (see Theorem \ref{regularizare}).

In Section \ref{nondisipative} we show that non-dispersive numerical schemes with rough data behaves badly. Indeed, 
 when using non-dispersive numerical schemes, combined with a  $H^1(\rr)$-approximation of the initial data $\varphi\in H^s(\rr)\backslash H^1(\rr)$, one gets an order of convergence $|\log h|^{-s/(1-s)}$
 which is much weeker than the $h^{s/2}$-one that dispersive schemes ensure.

The paper is organized as follows. In Section \ref{lin} we first obtain a quite general result
which allows us to estimate the difference of two families of operators that admit Strichartz estimates.
We then particularize it to operators acting on  discrete spaces $l^p(h\zz)$, obtaining results which
will be used in the following sections to get the order of convergence for approximations of the NSE.
In Section \ref{dis.lse}  and Section \ref{two.grid.section} we revisit the dispersive schemes for LSE introduced in \cite{liv2,1063.35016,liv3,liv-siam}
which are based, respectively, on the use of artificial numerical viscosity and a two-grid preconditioning 
technique of the initial data.

Section \ref{non} is devoted to analyze
approximations of the NSE based on the dispersive schemes analyzed in previous sections.  Section \ref{nondisipative} contains classical
material on conservative schemes that we include here in order to emphasize the advantages of the dispersive methods.  Finally, Section
\ref{putos.lemas} contains some technical results  used along the paper.

The analysis in this paper can be extended to fully discrete dispersive schemes introduced and analyzed in
\cite{liv-m3as} and to the multidimensional case. 
However, several technical aspects need to be dealt with carefully. In particular,  one has to take care of the well-posedness of the NSE (see \cite{1055.35003}). Furthermore,  suitable versions of the technical  harmonic analysis results employed in the paper (see, for instance, Section \ref{putos.lemas})  would also be needed (see \cite{grafakos}). This will be the object of future work.

Our methods use Fourier analysis techniques in an essential manner. Adapting this theory to numerical approximation schemes in non-regular meshes is by now a completely open subject.

%
%

\section{Estimates on  linear semigroups} \label{lin}
\setcounter{equation}{0}
In this section we will obtain $L^q_tL^r_x$ estimates for the difference of two semigroups $S_A(t)$ and $S_B(t)$  which admit Strichartz estimates. Once
this result is obtained in an abstract setting we particularize it to the discrete spaces $l^p(h\zz)$.

\subsection{An abstract result}
First we state  a well-known result by Keel and Tao \cite{0922.35028}.
\begin{prop}(\cite{0922.35028}, Theorem 1.2)\label{strfil}
Let $H$ be a Hilbert space, $(X,dx)$ be a measure space and
$U(t):H\rightarrow L^2(X)$ be a one parameter family of mappings with $t\in \rr$,
which obey the energy estimate
\begin{equation}\label{l2-stable}
    \|U(t)f\|_{L^2(X)}\leq C\|f\|_{H}
\end{equation}
and the decay estimate
\begin{equation}\label{linfty.dis}
    \|U(t)U(s)^*g\|_{L^\infty(X)}\leq C|t-s|^{-\alpha}\|g\|_{L^1(X)}
\end{equation}
for some $\alpha>0$. Then
\begin{equation}\label{dis.stric}
   \|U(t)f\|_{L^q(\rr,\,L^r(X))}\leq C\|f\|_{H},
\end{equation}
\begin{equation}\label{filsch401}
  \left\|\int_\rr (U(s))^*F(s,\cdot))ds\right\|_{H}\leq
C\|F\|_{L^{q'}(\rr,\,L^{r'}(X))},
\end{equation}
\begin{equation}\label{dis.stric.inhomg}
    \left\|\int _0^t
U(t-s)F(s)ds\right\|_{L^q(\rr,\,L^{r}(X))}\leq
C\|F\|_{L^{\tq'}(\rr,\,L^{\tr '}(X))}
\end{equation}
for all $(q,r)$ and $(\tq,\tr)$, $\alpha$-admissible pairs.
\end{prop}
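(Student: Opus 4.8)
The plan is to prove Proposition~\ref{strfil} by the $TT^*$ argument of Keel and Tao. First I would introduce the operator $T\colon H\to L^q(\rr,L^r(X))$, $(Tf)(t)=U(t)f$, and record that, by duality, the bound \eqref{dis.stric} for $T$, the bound \eqref{filsch401} for $T^*$, and the boundedness of $TT^*\colon L^{q'}(\rr,L^{r'}(X))\to L^q(\rr,L^r(X))$ are mutually equivalent, where
\[
(TT^*F)(t)=\int_\rr U(t)U(s)^*F(s)\,ds .
\]
So it suffices to prove the bilinear inequality
\[
\left|\int_\rr\!\!\int_\rr \bigl\langle U(t)U(s)^*F(s),\,G(t)\bigr\rangle_{L^2(X)}\,ds\,dt\right|
\le C\,\|F\|_{L^{q'}(\rr,L^{r'}(X))}\,\|G\|_{L^{q'}(\rr,L^{r'}(X))} .
\]

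The next step is to distill two kernel estimates from the hypotheses: taking adjoints in \eqref{l2-stable} gives $\|U(t)U(s)^*\|_{L^2(X)\to L^2(X)}\le C^2$, while \eqref{linfty.dis} reads $\|U(t)U(s)^*\|_{L^1(X)\to L^\infty(X)}\le C|t-s|^{-\alpha}$. Riesz--Thorin interpolation between the two then yields
\[
\|U(t)U(s)^*g\|_{L^r(X)}\le C\,|t-s|^{-\alpha(1-2/r)}\,\|g\|_{L^{r'}(X)}=C\,|t-s|^{-2/q}\,\|g\|_{L^{r'}(X)},
\]
where the last equality is exactly the admissibility relation \eqref{adm}. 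Inserting this bound into the bilinear form and using H\"older's inequality on $X$ reduces matters to the scalar inequality
\[
\int_\rr\!\!\int_\rr |t-s|^{-2/q}\,a(s)\,b(t)\,ds\,dt\le C\,\|a\|_{L^{q'}(\rr)}\,\|b\|_{L^{q'}(\rr)} .
\]
When $q>2$ (so $0<2/q<1$ and $1<q'<\infty$) this is precisely the one-dimensional Hardy--Littlewood--Sobolev inequality, the exponents satisfying the sharp balance $\tfrac1{q'}+\tfrac1{q'}+\tfrac2q=2$; and the degenerate case $q=\infty$, $r=2$ is just \eqref{l2-stable}. This settles every non-endpoint admissible pair.

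I expect the endpoint $q=2$ --- which can occur only when $\alpha>1$, in which case $r=2\alpha/(\alpha-1)<\infty$, the pair $(2,\infty)$ never being admissible --- to be the real obstacle, since Hardy--Littlewood--Sobolev breaks down there. To handle it I would follow Keel--Tao closely: decompose $TT^*=\sum_{j\in\zz}T_j$ according to the dyadic scale $|t-s|\sim 2^j$, interpolate the off-diagonal $L^2\to L^2$ and $L^1\to L^\infty$ bounds to obtain, for exponent pairs slightly off $(r',r)$, decay or growth $\|T_j\|\lesssim 2^{\mp j\beta}$ with $\beta>0$, and then sum these geometric series against the atomic decomposition of the Lorentz space $L^{2,1}$ in the time variable, via a Schur-type orthogonality argument; this is precisely where the full strength of their method is required. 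Finally, \eqref{filsch401} is the dual of \eqref{dis.stric}, and the retarded estimate \eqref{dis.stric.inhomg} for admissible pairs $(q,r)$ and $(\tq,\tr)$ is deduced from the homogeneous bounds by inserting the cutoff $\{s<t\}$: for non-endpoint exponents this is the Christ--Kiselev lemma, and at the endpoint, where that lemma fails, one again argues directly on the dyadic pieces $T_j$.
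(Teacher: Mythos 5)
The paper offers no proof of this proposition: it is quoted verbatim from Keel--Tao (\cite{0922.35028}, Theorem 1.2), and your outline is precisely the standard proof of that theorem --- the $TT^*$ reduction, interpolation of the $L^2\to L^2$ and $L^1\to L^\infty$ kernel bounds to get $|t-s|^{-2/q}$ decay, Hardy--Littlewood--Sobolev for non-endpoint pairs, the dyadic bilinear argument at the endpoint, duality for \eqref{filsch401}, and Christ--Kiselev (with the endpoint handled on the dyadic pieces) for \eqref{dis.stric.inhomg} --- so it is correct and follows essentially the same route as the cited source. The only caveat is that the endpoint and retarded-endpoint steps are sketched rather than carried out, but that is exactly the content delegated to the reference the paper invokes.
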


%

The following theorem provides the key estimate in obtaining the order of convergence when the LSE is approximated by a dispersive scheme.

\begin{teo}\label{diff}
Let $(X,dx)$ be a measure space,  $A:D(A)\rightarrow L^2(X)$, $B:D(B)\rightarrow L^2(X)$
two linear m-dissipative operators with $D(A)\hookrightarrow D(B)$ continuously and satisfying $AB=BA$.
Assume that $(S_A(t))_{t\geq 0}$ and $(S_B(t))_{t\geq 0}$  the semigroups
generated by  $A$ and $B$ satisfy  assumptions  \eqref{l2-stable} and \eqref{linfty.dis} with $H=L^2(X)$.
 Then  for any   two $\alpha$-admissible pairs $(q,r)$, $(\tq,\tr)$ the following hold:\\
 i)
There exists a positive constant $ C(q)$ such that
\begin{align}\label{dis.stric.2}
   \|S_A(t)\varphi-S_B(t)\varphi\|_{L^q(I,\,L^r(X))}\leq 
    C(q)\min\Big\{\|\varphi\|_{L^2(X)},|I|\|(A-B)\varphi\|_{L^2(X)}\Big\}
\end{align}
 for all bounded  intervals $I$ and $\varphi\in D(A)\cap D(B)$. \\
 ii) There exists a positive constant
$C(q,\tq)$ such that
\begin{align}\label{dis.stric.inhomg.2}
    \Big\|\int
    _{0}^t&S_A(t-s)f(s)ds-\int
    _{0}^tS_B(t-s)f(s)ds\Big\|_{L^q(I,\,L^r(X))}
    \\[10pt]
   \nonumber &  \leq C(q,\tq)\min\Big\{\|f\|_{L^{\tilde{q}'}(I,\,L^{\tilde{r}'}(X))},
   |I|\|(A-B)f\|_{L^{\tilde{q}'}(I,\,L^{\tilde{r}'}(X)) }\Big\}
\end{align}
  for all bounded  intervals $I$ and  $f\in L^{\tilde{q}'}(I,\,L^{\tilde{r}'}(X)) $ such that $(A-B)f\in L^{\tilde{q}'}(I,\,L^{\tilde{r}'}(X))$.

\end{teo}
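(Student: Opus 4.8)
The plan is to derive both estimates from Proposition \ref{strfil} applied to the operators $A$ and $B$ separately, combined with an interpolation-type argument between the trivial bound coming from the triangle inequality and the $L^2$-bound coming from the fundamental theorem of calculus. For part i), the first bound $\|S_A(t)\varphi-S_B(t)\varphi\|_{L^q(I,L^r(X))}\leq C(q)\|\varphi\|_{L^2(X)}$ is immediate: apply the triangle inequality and then the homogeneous Strichartz estimate \eqref{dis.stric} to each of $S_A$ and $S_B$ (using that $I\subset\rr$). For the second bound, the key observation is that, since $A$ and $B$ commute and generate commuting semigroups, we may write
\begin{equation}\label{planduhamel}
S_A(t)\varphi-S_B(t)\varphi=\int_0^t\frac{d}{ds}\big(S_B(t-s)S_A(s)\varphi\big)\,ds=\int_0^tS_B(t-s)S_A(s)(A-B)\varphi\,ds,
\end{equation}
which is a Duhamel-type identity with forcing term $F(s)=S_A(s)(A-B)\varphi$. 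This requires $\varphi\in D(A)\cap D(B)$ so that $(A-B)\varphi$ makes sense and the differentiation under the integral is justified; the hypothesis $D(A)\hookrightarrow D(B)$ ensures the relevant semigroup orbit stays in $D(B)$. Then I would estimate the right-hand side of \eqref{planduhamel} by moving the $L^q(I,L^r(X))$-norm inside, using $|I|$ to control the time integral crudely, and then applying the $L^2$-stability \eqref{l2-stable} for $S_B(t-s)$ and $S_A(s)$ pointwise in $s$; this yields the bound $C\,|I|\,\|(A-B)\varphi\|_{L^2(X)}$. Taking the minimum of the two bounds gives \eqref{dis.stric.2}.

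For part ii) I would argue identically but one level up. Again the first bound follows by the triangle inequality together with the inhomogeneous Strichartz estimate \eqref{dis.stric.inhomg} applied to $S_A$ and $S_B$. For the second bound, fix $s$ and apply the commutator identity \eqref{planduhamel} to the difference $S_A(t-s)f(s)-S_B(t-s)f(s)$, writing it as $\int_s^tS_B(t-\sigma)S_A(\sigma-s)(A-B)f(s)\,d\sigma$; integrating in $s$ and using Fubini yields a double time integral of terms of the form $S_B(\cdot)S_A(\cdot)(A-B)f(s)$. I would then bound the $L^q(I,L^r(X))$-norm by pulling it inside both integrals, using $|I|$ to absorb the extra time integration and the inhomogeneous Strichartz estimate \eqref{dis.stric.inhomg} (with the pair $(\tq,\tr)$) for the remaining $\sigma$-integral, landing on $C(q,\tq)\,|I|\,\|(A-B)f\|_{L^{\tq'}(I,L^{\tr'}(X))}$. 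Taking the minimum with the first bound completes the proof.

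The main obstacle I anticipate is the rigorous justification of the commutator/Duhamel identity \eqref{planduhamel} and its analogue in part ii): one must check that $s\mapsto S_B(t-s)S_A(s)\varphi$ is differentiable with the stated derivative, which uses $m$-dissipativity of $A$ and $B$, the commutation $AB=BA$ at the level of the generators and hence of the semigroups, and the embedding $D(A)\hookrightarrow D(B)$ to guarantee that $S_A(s)\varphi\in D(A)\subset D(B)$ so that both $A$ and $B$ can be applied. A secondary technical point is a density/approximation argument in part ii): the identity is cleanest for smooth $f$ (say $f$ a finite sum of the form $g(s)\psi$ with $\psi\in D(A)\cap D(B)$), and one then extends to general $f$ with $(A-B)f\in L^{\tq'}(I,L^{\tr'}(X))$ by density, using the already-established continuity of all the operators involved in the $L^{\tq'}L^{\tr'}\to L^qL^r$ topology. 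Everything else is a routine application of Minkowski's integral inequality and the Strichartz bounds from Proposition \ref{strfil}.
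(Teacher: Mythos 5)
Your skeleton is the paper's: triangle inequality plus Strichartz for the first bounds, and the commutator/Duhamel identity $S_A(t)\varphi-S_B(t)\varphi=\int_0^t S_B(t-s)S_A(s)(A-B)\varphi\,ds$ for the second, with essentially the same justification of that identity. The gap is in how you close the estimates. In part i) you propose, after Minkowski, to use the $L^2$-stability \eqref{l2-stable} of $S_B(t-s)$ ``pointwise in $s$''; but the norm to be controlled is $L^q(I,\,L^r(X))$ with $r>2$, and $L^2$-bounds on $S_B$ give no information in $L^r(X)$. What is needed (and what the paper uses) is the dispersive estimate for $S_B$: either apply the homogeneous bound \eqref{dis.stric} to $t\mapsto S_B(t-s)\psi_s$ for each fixed $s$, or, as the paper does, read the identity as an inhomogeneous problem for $S_B$ and apply \eqref{dis.stric.inhomg} with $(\tq',\tr')=(1,2)$, then bound $\|S_A(\cdot)(A-B)\varphi\|_{L^1(I,\,L^2(X))}\leq |I|\,\|(A-B)\varphi\|_{L^2(X)}$. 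Only the inner semigroup $S_A$ can be handled by bare $L^2$-stability; the outer one must be dispersive, which is precisely the point of the theorem.

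In part ii) the plan as stated would not produce the claimed right-hand side. If you pull the $L^q(I,\,L^r(X))$-norm inside both time integrals, then Strichartz for the outer semigroup and $L^2$-stability for the inner one land you on $C\,|I|\,\|(A-B)f\|_{L^1(I,\,L^2(X))}$, and on a general measure space (and uniformly in $h$ when $X=h\zz$) this is not controlled by $|I|\,\|(A-B)f\|_{L^{\tq'}(I,\,L^{\tr'}(X))}$, since $L^{\tr'}(X)$ does not embed into $L^2(X)$. Conversely, if you keep the $\sigma$-integral intact and apply \eqref{dis.stric.inhomg} with source pair $(\tq',\tr')$ for fixed $s$, the forcing is $S_A(\sigma-s)(A-B)f(s)$, which you cannot estimate in $L^{\tq'}_\sigma L^{\tr'}_x$. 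The paper's resolution is structural: after Fubini one writes $\Lambda f(t)=\int_0^t S_A(t-\sigma)\,\Lambda_1(A-B)f(\sigma)\,d\sigma$ with $\Lambda_1 g(t)=\int_0^t S_B(t-\tau)g(\tau)\,d\tau$, and then applies \eqref{dis.stric.inhomg} twice: first to the outer semigroup with $(\tq',\tr')=(1,2)$, picking up the factor $|I|$ by H\"older in time, and only then to the inner operator $\Lambda_1$ with target pair $(\infty,2)$ and source pair $(\tq,\tr)$. It is this last, dual-Strichartz step, applied to the inner time integral as a whole rather than pointwise, that converts the $L^{\tq'}(I,\,L^{\tr'}(X))$ integrability of $(A-B)f$ into an $L^2(X)$ bound; your sketch needs to be reorganized along these lines to go through.
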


\begin{proof}[Proof of Theorem \ref{diff}] Using that the operators $S_A$ and $S_B$ verify
hypotheses \eqref{l2-stable} and \eqref{linfty.dis} of  Proposition \ref{strfil} with $H=L^2(X)$, by \eqref{dis.stric}  we  obtain
\begin{equation}\label{diff.1}
\|S_A(t)\varphi-S_B(t)\varphi\|_{L^q(I,\,L^r(X))}\leq
    C(q)\|\varphi\|_{L^2(X)}
\end{equation}
and, by \eqref{dis.stric.inhomg},
\begin{equation}\label{diff.2}
 \Big\|\int
    _{0}^tS_A(t-s)f(s)ds-\int
    _{0}^tS_B(t-s)f(s)ds\Big\|_{L^q(\rr,\,L^r(X))}
   \leq C(q,\tq)\|f\|_{L^{\tilde{q}'}(\rr,\,L^{\tilde{r}'}(X))}.
\end{equation}

In view of \eqref{diff.1} and \eqref{diff.2} it is then sufficient to prove the following estimates:
\begin{equation}\label{est.diff.1}
   \|S_A(t)\varphi-S_B(t)\varphi\|_{L^q(I,\,L^r(X))}\leq C(q)|I|\|(A-B)\varphi\|_{L^2(X)}
\end{equation}
and
\begin{equation}\label{dis.stric.inhomg.2.1}
     \Big\|\int
    _{0}^tS_A(t-s)f(s)ds-\int
    _{0}^tS_B(t-s)f(s)ds\Big\|_{L^q(I,\,L^r(X))}
     \leq C(q,\tq)|I|
   \|(A-B)f\|_{L^{\tilde{q}'}(I,\,L^{\tilde{r}'}(X)) }.
\end{equation}

In the case of \eqref{est.diff.1}
 we write the difference $S_A(\cdot)-S_B(\cdot)$ as follows  
 \begin{equation}\label{semigrup-identity}
S_A(t)\varphi-S_B(t)\varphi=\int _0^t S_B(t-s)(A-B)S_A(s)\varphi  ds.
\end{equation}
 In order to justify this identity let us recall that for any $\varphi\in D(A)\hookrightarrow D(B)$ we have that
  $u(t)=S_A(t)\varphi \in C([0,\infty),D(A))\cap C^1([0,\infty),L^2(X))$ and $v(t)=S_B(t)\varphi \in C([0,\infty),D(B))\cap C^1([0,\infty),L^2(X))$ verify
 the systems
 $u_t=Au, u(0)=\varphi,$ and $v_t=Bv,v(0)=\varphi$ respectively. Thus $w=u-v\in C([0,\infty),D(B))\cap C^1([0,\infty),L^2(X))$ satisfy the system
 $w_t=Bw+(A-B)u, w(0)=0$. Since $(A-B)u\in C([0,\infty),L^2(X))$ we obtain that $w$ satisfies \eqref{semigrup-identity}.

Going back to \eqref{semigrup-identity} and using  that $A$ and $B$ commute we get the following identity which is the key of our estimates:
\begin{equation}\label{magic}
S_A(t)\varphi-S_B(t)\varphi=\int _0^t S_B(t-s)S_A(s)(A-B)\varphi  ds.
\end{equation}
We apply Proposition \ref{strfil} to the semigroup $S_B(\cdot)$ and  function $F(s)=S_A(s)(A-B)\varphi$ in this identity and,
by \eqref{dis.stric.inhomg} with $\tilde r=2$ and $\tilde q=\infty$, we get
\begin{align}\label{diff.3}
\|S_A(t)\varphi-S_B(t)\varphi\|_{L^q(I,\,L^r(X))}&\leq C(q) \|S_A(s)(A-B)\varphi\|_{L^1(I,\,L^2(X))}\\
\nonumber&\leq C(q)|I|\|(A-B)\varphi\|_{L^2(X)}.
\end{align}
Thus, \eqref{est.diff.1} is proved. As a consequence
\eqref{diff.1} and \eqref{est.diff.1} give us \eqref{dis.stric.2}.

\medskip
We now prove the inhomogenous estimate \eqref{dis.stric.inhomg.2.1}.
 Using again \eqref{magic} we have
$$S_A(t-s)f(s)-S_B(t-s)f(s)=\int_0^{t-s}S_B(t-s-\sg)S_A(\sg )(A-B)f(s)d\sg.$$
We integrate this identity in the $s$ variable. Applying Fubini's theorem on the triangle
$\{(s,\sg):\,0\leq s\leq t, 0\leq \sigma\leq t-s\}$ and using that $A$ and $B$ commute,
 we get:
\begin{align}\label{identity}
\Lambda f(t):=&\int _{0}^tS_A(t-s)f(s)ds-\int  _{0}^tS_B(t-s)f(s)ds\\
\nonumber&=\int_0^t \int_0^{t-s}S_B(t-s-\sg)S_A(\sg )(A-B)f(s)d\sg ds\\
\nonumber&=\int_0^t \int_0^{t-\sg}S_B(t-s-\sg)S_A(\sg )ds (A-B)f(s)d\sg\\
\nonumber&=\int_0^tS_A(\sg )\int _0^{t-\sg}S_B(t-s-\sg)(A-B)f(s)dsd\sigma\\
\nonumber&\mathop{=}^{\sg\rightarrow t-\sg} \int_0^t S_A(t-\sg) \int _0^{\sg} S_B(\sg-s)(A-B)f(s)dsd\sigma\\
\nonumber&=\int_0^t S_A(t-\sg)\Lambda_1 (A-B)f(\sg)d\sg
\end{align}
where
$$\Lambda_1 g(t)=\int_0^t S_B(t-\tau) g(\tau)d\tau.$$
Applying the inhomogeneous estimate \eqref{dis.stric.inhomg} to the operator $S_A(\cdot)$ with $(\tq',\tr')=(1,2)$ we obtain
\begin{equation}\label{pula.1}
\|\Lambda f\|_{L^q(I,\,L^r(X))}\leq C(q)\|\Lambda_1 (A-B)f\|_{L^1(I,\,L^2(X))}\leq C(q)|I| \|\Lambda_1 (A-B)f\|_{L^\infty(I,\,L^2(X))}.
\end{equation}
Using again \eqref{dis.stric.inhomg}  for the semigroup $S_B(\cdot)$, $F=(A-B)f$ and $(q,r)=(\infty,2)$ we get
\begin{equation}\label{pula.2}
\|\Lambda_1 (A-B)f\|_{L^\infty(I,\,L^2(X))}\leq C(\tq)\|(A-B)f\|_{L^{\tq'}(I,\, L^{\tr'}(X))}.
\end{equation}
Combining \eqref{pula.1} and \eqref{pula.2} we deduce  \eqref{dis.stric.inhomg.2.1}. Estimates \eqref{diff.2} and \eqref{dis.stric.inhomg.2.1}
 finish the proof.
\end{proof}

\begin{remark}
 We point out that, in the the proof of the following estimate
 $$\|S_A(t)\varphi-S_B(t)\varphi\|_{L^q(I,\,l^r(X))}\leq
    C(q)|I|\|(A-B)\varphi\|_{L^2(X)},$$
  in view of \eqref{magic} and \eqref{diff.3}, we do not need that the two operators $S_A(t)$ and $S_B(t)$ admit Strichartz  estimates. Indeed, it
  is sufficient  to assume that  only one of the involved operators admits Strichartz  estimates and the other one to be
   stable in $L^2(X)$.
\end{remark}

\subsection{Spaces and Notations.} In this section we  introduce the spaces we will use along the paper.
The computational mesh is $h\zz=\{jh:\,j\in\zz\}$ for some $h>0$ and the $l^p(h\zz)$ spaces are defined as follows:
$$l^p(h\zz)=\{\varphi:h\zz\rightarrow \cn:\, \|\varphi\|_{l^p(h\zz)}<\infty\}$$
where
$$\|\varphi\|_{l^p(h\zz)}=\left \{
\begin{array}{cc}
  \displaystyle\Big(h\sum_{j\in \zz}|u(jh)|^p\Big)^{1/p} & 1\leq p<\infty, \\[10pt]
   \displaystyle\sup_{j\in \zz} |u(jh)| & p=\infty.
\end{array}
\right.$$
On the Hilbert space $l^2(h\zz)$ we will consider the following scalar product
$$(u,v)_h={\rm{Re}}\Big(h\sum_{j\in \zz}u(jh)\overline{v(jh)}\Big).$$
When necessary, to simplify the presentation, we will write $(\varphi_j)_{j\in \zz}$ instead of
$(\varphi(jh))_{j\in \zz}$.

For a discrete function $\{\varphi(jh)\}_{j\in \zz}$ we denote by $\hat\varphi$ its discrete Fourier transform:
\begin{equation}\label{dft}
\hat\varphi(\xi)=h\sum_{j\in \zz} e^{-ij\xi h}\varphi(jh).
\end{equation}

For $s\geq 0$ and $1< p<\infty$, $W^{s,p}(\rr)$ denotes  the Sobolev space
$$W^{s,p}(\rr)=\{\varphi\in \mathcal{S}'(\rr):\,(I-\Delta)^{s/2}\varphi\in L^p(\rr) \}$$
with the norm
$$\|\varphi\|_{W^{s,p}(\rr)}=\|((1+|\xi|^2)^{s/2}\hat \varphi)^\vee\|_{L^p(\rr)}$$
and by $H^s(\rr)$ the Hilbert space $W^{s,2}(\rr)$.

The homogenous spaces $\dot W^{s,p}(\rr)$, $s\geq 0$ and $1\leq p<\infty$, are given by
$$\dot W^{s,p}(\rr)=\{\varphi\in \mathcal{S}'(\rr):\,(-\Delta)^{s/2}\varphi\in L^p(\rr) \}$$
endowed with the semi-norm
$$\|\varphi\|_{\dot W^{s,p}(\rr)}=\|(|\xi|^s\hat \varphi)^\vee\|_{L^p(\rr)}.$$ If $p=2$ we denote $\dot H^s(\rr)=\dot W^{s,2}(\rr).$

We will also use the Besov spaces both in the continuous and the discrete framework. It is convenient to
consider a function $\eta_0\in C_c(\rr)$ such that
$$\eta_0(\xi)=\left\{\begin{array}{ccc}
                     1 & \mbox{if}& |\xi|\leq 1, \\[3pt]
                     0 & \mbox{if}& |\xi|\geq 2,
                   \end{array}
\right.$$
 and to define the sequence $(\eta_j)_{j\geq 1}\in \mathcal{S(\rr)}$ by
 $$\eta_j=\eta_0\Big(\frac \xi {2^j}\Big)-\eta_0\Big(\frac \xi {2^{j-1}}\Big)$$
in order to define the Littlewood-Paley decomposition.
For any $j\geq 0$ we set the cut-off projectors, $P_j\varphi$, as follows
\begin{equation}\label{cutoff}
P_j \varphi=(\eta_j \hat\varphi)^\vee.
\end{equation}
We point out that these projectors can be defined both for functions of continuous and discrete variables by means of the classical and the semi-discrete Fourier transform.

Classical results  on Fourier multipliers, namely Marcinkiewicz's multiplier theorem, (see Theorem \ref{graf}) show the following uniform estimate on the projectors $P_j$: For all $p\in (1,\infty)$ there exists $c(p)$ such that
\begin{equation}\label{est.pj}
\|P_j \varphi\|_{L^p(\rr)}\leq c(p)\| \varphi\|_{L^p(\rr)},\, \forall\, \varphi\in L^p(\rr).
\end{equation}

We introduce the Besov spaces
 $B^{s}_{p,2}(\rr)$ for $1\leq p\leq \infty$ by $B^s_{p,2}=\{u\in \mathcal{S}'(\rr):\|u\|_{B^{s}_{p,2}(\rr)}<\infty\}$
with
$$\|u\|_{B^{s}_{p,2}(\rr)}=\|P_0 u\|_{L^p(\rr)}+\Big(\sum_{j=1}^\infty
2^{2sj}\|P_j u\|_{L^p(\rr)}^2\Big)^{1/2}.$$
Their discrete counterpart $B_{p,2}^s(h\zz)$ with $1< p< \infty$ and $s\in \rr$ is given by
$$B_{p,2}^s(h\zz)=\{u:\|u\|_{B_{p,2}^s(h\zz)}<\infty\},$$
with
\begin{equation}\label{norma.besov}
    \|u\|_{B_{p,2}^s(h\zz)}=\|P_0 u\|_{l^p(h\zz)}+\Big(\sum_{j=1}^\infty 2^{2js}\|P_j u\|_{l^p(h\zz)}^2\Big)^{1/2},
\end{equation}
where $P_j u$ given as in \eqref{cutoff}  are now defined  by means of the discrete Fourier transform of the discrete function $u:h \zz\rightarrow
\cn$.

We will also adapt well known results from harmonic analysis to the discrete framework. We recall now a result which
  goes back to Plancherel and Polya \cite{0018.15204} (see also \cite{MR1836633},  Theorem~17. p.~96, and the comments on  p.~182).
\begin{lemma}(\cite{0018.15204}, p.\,157)\label{app18}
For any $p\in (1,\infty)$ there exist two positive constants $A(p)$ and $B(p)$ such that the following holds for all
functions $f$ whose Fourier transform is supported on $[-\pi,\pi]$:
\begin{equation}\label{app16}
A(p) \sum _{m\in \zz}|f(m)|^p \leq \int _\rr  |f(x)|^p dx\leq B(p) \sum _{m\in \zz}|f({m})|^p.
\end{equation}
\end{lemma}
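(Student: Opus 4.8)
The plan is to prove the two-sided estimate \eqref{app16} by exploiting the fact that a function $f$ whose Fourier transform is supported in $[-\pi,\pi]$ is the restriction to $\rr$ of an entire function of exponential type $\pi$ (Paley–Wiener), and in particular coincides with its own Shannon sampling series built on the integer lattice. Concretely, I would start from the reproducing identity
\begin{equation*}
f(x)=\sum_{m\in\zz}f(m)\,\operatorname{sinc}(x-m),\qquad \operatorname{sinc}(y)=\frac{\sin\pi y}{\pi y},
\end{equation*}
which holds in $L^2$ and, for $f\in L^p$ of exponential type $\pi$, pointwise and in $L^p$ as well. From here the right-hand inequality $\int_\rr|f|^p\,dx\le B(p)\sum_m|f(m)|^p$ follows by viewing $f$ as a ``discrete convolution'' of the sequence $(f(m))_m$ against the kernel $\operatorname{sinc}$; more precisely one writes $f(x)=\sum_m f(m)g(x-m)$ and estimates $\|f\|_{L^p(\rr)}$ by splitting $\rr=\bigcup_{k}[k,k+1)$, using that on each unit cell the relevant tails of $\operatorname{sinc}$ are summable, and then applying Young's inequality for the convolution of the sequence $(f(m))$ with the sequence of $L^p$-norms of $\operatorname{sinc}$ over unit cells. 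The only mild subtlety is that $\operatorname{sinc}\notin L^1$, so one cannot use the crude $\|g\|_{L^1}$ bound; instead one uses that $\big(\int_k^{k+1}|\operatorname{sinc}(x)|^p\,dx\big)^{1/p}$ is summable in $k$ for $p>1$, which is exactly where the hypothesis $p>1$ enters.

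For the left-hand inequality $A(p)\sum_m|f(m)|^p\le\int_\rr|f|^p\,dx$, the plan is to bound each sample value $|f(m)|$ by a local $L^p$-average of $f$ near $m$. The standard device is a mean value / Bernstein-type inequality: since $f$ has Fourier support in $[-\pi,\pi]$, so does $f\cdot\psi$ appropriately, and one has a pointwise estimate of the form $|f(m)|^p\le C\int_{m-1}^{m+1}|f(x)|^p\,dx$ obtained by writing $f(m)$ as an integral of $f$ against a fixed Schwartz function whose Fourier transform equals $1$ on $[-\pi,\pi]$ (so that it reproduces $f$), combined with a Bernstein inequality controlling $f'$ on the unit interval by $f$ itself; alternatively, one may invoke the submean-value property of $|f|^p$ for band-limited $f$. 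Summing over $m\in\zz$ and noting that the intervals $[m-1,m+1]$ have bounded overlap then yields the claim.

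The main obstacle — and the place where care is genuinely needed — is the right-hand inequality, because of the slow, non-absolutely-summable decay of the $\operatorname{sinc}$ kernel: a naive application of Young's inequality with $L^1$ kernel norm fails, and the argument only closes for $p>1$ (indeed the statement is false at $p=1$ and $p=\infty$ in general). One must therefore organize the estimate so that only the $\ell^{p'}$-summability of cell-wise $L^p$ norms of $\operatorname{sinc}$ is used, together with Hölder's inequality in the $m$-variable. Since the lemma is quoted from \cite{0018.15204} and used here only as a black box for transferring continuous norms to discrete sampled norms, I would in practice give only this sketch and refer to the cited sources (Plancherel–Polya \cite{0018.15204}, and \cite{MR1836633}, Theorem~17, p.~96) for the detailed constants.
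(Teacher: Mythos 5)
The paper itself offers no proof of this lemma: it is quoted directly from Plancherel--P\'olya and from Young's book, so there is no internal argument to compare against, and your closing decision to defer to the cited sources is consistent with how the paper uses the result. Your sketch of the left inequality is essentially the standard one and is sound: pick a Schwartz function $\psi$ with $\hat\psi\equiv 1$ on $[-\pi,\pi]$, so that $f=f*\psi$, bound $|f(m)|^p$ by a local weighted $L^p$-average of $f$ via H\"older, and sum in $m$ using $\sup_y\sum_m|\psi(m-y)|<\infty$; no Bernstein inequality is really needed, and this half in fact works for all $1\le p<\infty$.

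The genuine gap is in your plan for the right inequality, and it is not a ``mild subtlety''. First, the claimed summability is false: on $[k,k+1]$ one has $|\operatorname{sinc}(x)|\simeq|\sin\pi x|/(\pi|k|)$, so $\bigl(\int_k^{k+1}|\operatorname{sinc}|^p\,dx\bigr)^{1/p}\simeq c_p/(1+|k|)$, which is not summable in $k$ for any $p$; the hypothesis $p>1$ does not rescue an $\ell^1$-type bound on cell norms. Second, your fallback --- H\"older in $m$ using only $\ell^{p'}$-summability of $|\operatorname{sinc}(x-m)|$ --- yields $\sup_x|f(x)|\le C\,\|(f(m))\|_{\ell^p}$ (the Young/H\"older scheme $\ell^p*\ell^{p'}\to\ell^\infty$), i.e.\ an $L^\infty$ bound, not the required $L^p$ bound. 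More fundamentally, no argument using only the size of the sinc kernel can close: taking $c_m=\mathbf{1}_{\{1\le m\le N\}}$ one checks $\bigl\|\sum_m c_m|\operatorname{sinc}(\cdot-m)|\bigr\|_{L^p(\rr)}\gtrsim N^{1/p}\log N$, while $\|c\|_{\ell^p}=N^{1/p}$, so the ``modulus'' reconstruction operator is unbounded from $\ell^p(\zz)$ to $L^p(\rr)$; the true inequality rests on cancellation. The classical repairs are either the boundedness of the conjugate function/Hilbert transform (the route in Plancherel--P\'olya, Young, Boas), or a duality argument: for $g\in L^{p'}$ write $\int_\rr f\bar g=\int_\rr f\,\overline{Pg}=\sum_m f(m)\overline{(Pg)(m)}$, where $P$ is the Riesz (band-limiting) projection onto $[-\pi,\pi]$, bounded on $L^{p'}$ for $1<p'<\infty$, and then apply the already proved left inequality with exponent $p'$ to $Pg$. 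Note also that the critical band $[-\pi,\pi]$ is exactly the situation the paper needs (functions band-limited to $[-\pi/h,\pi/h]$), so one cannot instead assume the spectrum lies strictly inside the interval, which is the only setting where a rapidly decaying reproducing kernel would make your Young-type cell argument succeed.
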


This result permits to show, by scaling,   that, for all $h>0$,
\begin{equation}\label{norm.equiv}
A(p)^{1/p}\|f\|_{l^p(h\zz)}\leq \|f\|_{L^p(\rr)}\leq B(p)^{1/p}\|f\|_{l^p(h\zz)}
\end{equation}
holds for all functions $f$ with their
 Fourier transform supported in $[-\pi/h,\pi/h]$. 
%


%
%
%

For the sake of completeness we state now the discrete version of the well known uniform $L^p$-estimate \eqref{est.pj} for
the cut-off projectors $P_j$.

\begin{lemma}
For any $p\in (1,\infty)$ there exists a positive constant $c(p)$ such that
\begin{equation}\label{est.projection}
  \|P_j\varphi\|_{l^p(h\zz)}\leq c(p)\|\varphi\|_{l^p(h\zz)}
\end{equation}
holds for all $\varphi\in l^p(h\zz)$, $j\geq 0$, uniformly in $h>0.$
\end{lemma}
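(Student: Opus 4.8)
The plan is to deduce the discrete uniform bound \eqref{est.projection} from its continuous counterpart \eqref{est.pj} together with the Plancherel--Polya norm equivalence \eqref{norm.equiv}, using the fact that the multipliers $\eta_j$ are supported in frequency bands that, for $h$ small, may have support escaping $[-\pi/h,\pi/h]$ only when $2^j \gtrsim \pi/h$, in which case $P_j\varphi$ is genuinely truncated by the periodicity of the discrete Fourier transform. First I would fix $p\in(1,\infty)$ and a discrete function $\varphi\in l^p(h\zz)$, and split into the two regimes $2^j \leq \pi/h$ and $2^j > \pi/h$.

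In the low-frequency regime $2^{j+1}\leq \pi/h$ (so that $\operatorname{supp}\eta_j\subset[-\pi/h,\pi/h]$), the idea is to build an auxiliary function $F$ on $\rr$ whose Fourier transform agrees with $\hat\varphi$ on $[-\pi/h,\pi/h]$; concretely, take $F=(\what\varphi\, \mathbf 1_{[-\pi/h,\pi/h]})^\vee$ as a genuine $L^p(\rr)$ function. Then $F$ samples to $\varphi$ on $h\zz$, $F$ has Fourier support in $[-\pi/h,\pi/h]$, and the continuous projector $P_jF$ (defined via the usual Fourier transform) has Fourier support in $[-\pi/h,\pi/h]$ as well, so that \eqref{norm.equiv} applies to both $F$ and $P_jF$. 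Moreover, since $\eta_j$ is supported inside $[-\pi/h,\pi/h]$, the discrete $P_j\varphi$ is exactly the restriction of the continuous $P_jF$ to $h\zz$. Chaining the inequalities gives
\begin{equation*}
\|P_j\varphi\|_{l^p(h\zz)} \leq A(p)^{-1/p}\|P_jF\|_{L^p(\rr)} \leq A(p)^{-1/p} c(p) \|F\|_{L^p(\rr)} \leq A(p)^{-1/p} B(p)^{1/p} c(p)\,\|\varphi\|_{l^p(h\zz)},
\end{equation*}
which is the claimed bound with a constant independent of $h$ and $j$. The constant one must be a little careful about is $P_0$: here $\eta_0$ is supported in $[-2,2]$, so the same argument needs $2\leq\pi/h$, i.e. $h\leq \pi/2$; for larger $h$ there are only finitely many mesh points per unit length and one can absorb that case into the constant directly, or simply note that the interesting regime is $h\to 0$.

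In the high-frequency regime $2^j > \pi/h$ (equivalently $2^{j-1}\geq \pi/h$, which forces $\eta_{j}$ to vanish on $[-\pi/h,\pi/h]$ for $j$ large enough), the discrete multiplier $\eta_j$ restricted to $[-\pi/h,\pi/h]$ is supported only in the annulus $\{\pi/h\leq |\xi|\leq \pi/h\}$ — essentially a boundary layer — so $P_j\varphi$ is either identically zero or supported on a single dyadic shell adjacent to the Nyquist frequency; in all cases there is at most one such $j$ (up to a fixed finite number, independent of $h$) for which $P_j\varphi\neq 0$, and for that $j$ one has $\|P_j\varphi\|_{l^p(h\zz)}\leq \|\varphi\|_{l^p(h\zz)}$ by applying Marcinkiewicz's multiplier theorem (Theorem \ref{graf}) directly to the $2\pi/h$-periodic multiplier $\eta_j|_{[-\pi/h,\pi/h]}$, whose total variation is bounded uniformly in $h$ and $j$. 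I expect the main obstacle to be precisely this bookkeeping at the Nyquist frequency: making rigorous that the discrete Fourier transform's periodicity does not spoil the band-limited structure needed to invoke \eqref{norm.equiv}, and handling the finitely many ``edge'' scales $2^j\sim\pi/h$ uniformly. Once that is isolated, everything else is a transcription of the continuous proof via the sampling isometry \eqref{norm.equiv}.
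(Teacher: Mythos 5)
Your low-frequency argument is exactly the paper's proof: the paper introduces the band-limited interpolator $\tilde\varphi(x)=\int_{-\pi/h}^{\pi/h}e^{ix\xi}\hat\varphi(\xi)\,d\xi$ (your $F$), observes that the interpolator of $P_j\varphi$ is $P_j\tilde\varphi$, and chains \eqref{norm.equiv} with the continuous bound \eqref{est.pj}. The case split you organize around the Nyquist frequency is, however, unnecessary, and the high-frequency branch as written is not justified. The point you are missing is that the identity between the discrete $P_j\varphi$ and the samples of the continuous $P_jF$ does not require $\operatorname{supp}\eta_j\subset[-\pi/h,\pi/h]$: the discrete projector in \eqref{cutoff} acts through the discrete Fourier transform, so $(P_j\varphi)(nh)=\int_{-\pi/h}^{\pi/h}e^{inh\xi}\eta_j(\xi)\hat\varphi(\xi)\,d\xi$, while the continuous $P_jF$ has Fourier transform $\eta_j\hat\varphi\,\mathbf 1_{[-\pi/h,\pi/h]}$, which is supported in $[-\pi/h,\pi/h]$ for \emph{every} $j$ simply because $\hat F$ already is. Hence $P_jF$ is band-limited to $[-\pi/h,\pi/h]$, its samples on $h\zz$ are precisely $P_j\varphi$, and \eqref{norm.equiv} together with \eqref{est.pj} gives the bound uniformly in $j\geq 0$ and $h>0$, with no boundary-layer bookkeeping; the same remark disposes of your worry about $P_0$ when $h$ is large. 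By contrast, in your high-frequency regime the step where you apply Theorem \ref{graf} ``directly to the $2\pi/h$-periodic multiplier'' is a genuine gap as stated: Theorem \ref{graf} is a multiplier theorem on $L^q(\rr)$, not on $l^p(h\zz)$, so some transference (or again the interpolator) would be needed, and it certainly does not yield constant $1$; moreover there can be two edge scales $j$ with $2^{j-1}<\pi/h<2^{j+1}$, not one. None of this matters once you notice that your first argument already covers those scales verbatim.
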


\begin{proof}
For a given discrete function $\varphi$ we consider its interpolator $\tilde \varphi$ 
defined as follows:
$$\tilde \varphi(x)=\int _{-\pi/h}^{\pi/h} e^{ix\xi } \hat \varphi(\xi)d\xi.$$
Thus, by \eqref{norm.equiv} we obtain
$$  \|P_j\varphi\|_{l^p(h\zz)}\leq c(p)\|(P_j \varphi)\,\tilde{}\, \|_{L^p(\rr)}=
c(p)\|P_j \tilde\varphi \|_{L^p(\rr)}\leq c(p)\| \tilde \varphi\|_{L^p(\rr)}
\leq c(p)\|\varphi\|_{l^p(h\zz)}.$$
\end{proof}

We recall  the following lemma which is a consequence of the Paley-Littlewood decomposition in the $x$ variable and
Minkowski's inequality in the time variable.

\begin{lemma}(\cite{MR1715192}, Ch.~5, p.~113, Lemma 5.2) Let $\eta\in C_c^\infty(\rr)$ and $P_j$ be defined as in
\eqref{cutoff}. Then
\begin{equation}\label{paley.1}
    \|\psi\|_{L^q(\rr,\,L^r(\rr))}^2\lesssim \sum_{j\geq 0}\|P_j \psi\|_{L^q(\rr,\,L^r(\rr))}^2 \ \text{if}\
2\leq r< \infty \ \text{and}\ 2\leq q\leq \infty
\end{equation}
and
\begin{equation}\label{paley.2}
    \sum_{j\geq 0}\|P_j \psi\|_{L^q(\rr,\,L^r(\rr))}^2\lesssim  \|\psi\|_{L^q(\rr,\,L^r(\rr))}^2 \ \text{if}\
1\leq r< 2 \ \text{and}\ 1\leq q\leq 2
\end{equation}
hold for all $\psi\in L^q(\rr,\,L^r(\rr))$.
\end{lemma}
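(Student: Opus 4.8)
The plan is to deduce both inequalities from the spatial Littlewood--Paley square function estimate applied for each fixed time $t$, and then to rearrange the resulting iterated norms by Minkowski's inequality, first in the variable $x$ and then in the variable $t$. The single recurring tool is the mixed-norm Minkowski inequality: for $1\leq p\leq \theta\leq\infty$ and $F=F(y,z)$,
\begin{equation*}
\big\|\,\|F\|_{L^p_z}\,\big\|_{L^\theta_y}\leq \big\|\,\|F\|_{L^\theta_y}\,\big\|_{L^p_z},
\end{equation*}
i.e.\ moving the larger exponent to the inside can only increase the norm; it is obtained by applying the ordinary Minkowski integral inequality with exponent $\theta/p\geq 1$ to the nonnegative functions $|F|^p$. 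I will use it with the counting measure in the frequency index $j$ (exponent $2$), Lebesgue measure in $x$ (exponent $r$), and Lebesgue measure in $t$ (exponent $q$). Abbreviating $h_j(t)=\|P_j\psi(t)\|_{L^r(\rr)}$, I also invoke the classical Littlewood--Paley inequalities, valid for $1<r<\infty$ and almost every $t$,
\begin{equation*}
\Big\|\Big(\sum_{j\geq 0}|P_j\psi(t)|^2\Big)^{1/2}\Big\|_{L^r(\rr)}\lesssim \|\psi(t)\|_{L^r(\rr)}\lesssim\Big\|\Big(\sum_{j\geq 0}|P_j\psi(t)|^2\Big)^{1/2}\Big\|_{L^r(\rr)},
\end{equation*}
which are the vector-valued form of the Marcinkiewicz multiplier theorem already used for \eqref{est.pj} (see Theorem \ref{graf}).

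For \eqref{paley.1}, with $2\leq r<\infty$ and $2\leq q\leq\infty$, I would fix $t$, use the upper Littlewood--Paley bound, and then apply the mixed-norm inequality in the variables $(x,j)$ to pass from $\|(\sum_{j\geq 0}|P_j\psi(t)|^2)^{1/2}\|_{L^r(\rr)}$ (outer exponent $r$, inner exponent $2$) to $(\sum_{j\geq 0}h_j(t)^2)^{1/2}$ (outer exponent $2$, inner exponent $r$); this step is legitimate precisely because $2\leq r$. It yields the pointwise-in-$t$ bound $\|\psi(t)\|_{L^r(\rr)}\lesssim (\sum_{j\geq 0}h_j(t)^2)^{1/2}$. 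Taking the $L^q_t$ norm and applying the mixed-norm inequality once more, now in the variables $(t,j)$ and using $2\leq q$, moves the $\ell^2_j$ norm outside the time integral and gives $\|\psi\|_{L^q(\rr,L^r(\rr))}\lesssim (\sum_{j\geq 0}\|P_j\psi\|_{L^q(\rr,L^r(\rr))}^2)^{1/2}$; squaring produces \eqref{paley.1}. The endpoint $q=\infty$ is handled identically, with $\|\cdot\|_{L^\infty_t}$ read as a supremum.

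For \eqref{paley.2}, with $1\leq r<2$ and $1\leq q\leq 2$, all exponents lie below $2$, so every mixed-norm step reverses direction and is paired with the \emph{lower} Littlewood--Paley bound. Fixing $t$, the inequality in $(x,j)$ now moves the $\ell^2_j$ norm \emph{inside} (legitimate since $r\leq 2$), giving $(\sum_{j\geq 0}h_j(t)^2)^{1/2}\leq \|(\sum_{j\geq 0}|P_j\psi(t)|^2)^{1/2}\|_{L^r(\rr)}\lesssim\|\psi(t)\|_{L^r(\rr)}$; the inequality in $(t,j)$, legitimate since $q\leq 2$, then gives $(\sum_{j\geq 0}\|P_j\psi\|_{L^q(\rr,L^r(\rr))}^2)^{1/2}\lesssim\|\psi\|_{L^q(\rr,L^r(\rr))}$, which is \eqref{paley.2} after squaring. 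The two statements are thus perfectly dual in structure, the direction of each step being dictated by the position of $q,r$ relative to $2$.

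The main obstacle is not analytic depth but careful bookkeeping, together with the genuine endpoints. Since $q/2$ or $r/2$ may be strictly below $1$, one cannot route the rearrangements through the ordinary triangle inequality in $L^{q/2}$ or $L^{r/2}$, which fails for exponents below $1$; the whole point is to keep the $\ell^2$ as a genuine mixed norm and use the inequality above, checking in each of the four applications that the exponent pushed inward is indeed the larger one. The remaining delicate issue is the boundary exponent $r=1$ (and, dually, $q=\infty$), where the scalar square function estimate degenerates (it maps $L^1$ only into weak $L^1$); at these endpoints one argues as in \cite{MR1715192}, for instance by duality against \eqref{paley.1} for the conjugate pair, using the uniform bound \eqref{est.pj} and the finite overlap of the supports of the $\eta_j$ in \eqref{cutoff}.
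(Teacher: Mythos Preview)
Your approach is exactly what the paper indicates: it does not prove the lemma but cites Sogge and remarks that the result ``is a consequence of the Paley--Littlewood decomposition in the $x$ variable and Minkowski's inequality in the time variable,'' which is precisely your two-step strategy of the spatial square-function bound followed by mixed-norm Minkowski in $(x,j)$ and then $(t,j)$. One small caveat: your suggested duality fix at the endpoint $r=1$ would pair against \eqref{paley.1} with $r'=\infty$, which is explicitly excluded there, so that particular route does not close; but since the paper itself does not argue the endpoint and simply defers to \cite{MR1715192}, your treatment is no less complete than the paper's.
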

Applying the above result and Lemma \ref{app18} to functions with their Fourier transform supported in $[-\pi/h,\pi/h]$, as above,
we can obtain a similar result in a discrete framework.
\begin{lemma}\label{discrete.paley} Let $\eta\in C_c^\infty(\rr)$ and $P_j$ defined as in
\eqref{cutoff}. Then
\begin{equation}\label{paley-direct}
    \|\psi\|_{L^q(\rr,\,l^r(h\zz))}^2\lesssim \sum_{j\geq 0}\|P_j \psi\|_{L^q(\rr,\,l^r(h\zz))}^2 \ \text{if}\
2\leq r< \infty \ \text{and}\ 2\leq q\leq \infty
\end{equation}
and
\begin{equation}\label{inverse.paley}
    \sum_{j\geq 0}\|P_j \psi\|_{L^q(\rr,\,l^r(h\zz))}^2\lesssim  \|\psi\|_{L^q(\rr,\,l^r(h\zz))}^2 \ \text{if}\
1\leq r< 2 \ \text{and}\ 1\leq q\leq 2
\end{equation}
hold  for all $\psi\in L^q(\rr,\,l^r(h\zz))$, uniformly in $h>0$.
\end{lemma}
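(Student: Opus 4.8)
The plan is to deduce both inequalities from their continuous counterparts \eqref{paley.1}--\eqref{paley.2} by transporting the problem to $\rr$ through the band-limited interpolation operator already employed above. Given $\psi\in L^q(\rr,l^r(h\zz))$, for almost every $t$ I introduce the interpolator $\tilde\psi(t,\cdot)$ of the discrete function $\psi(t,\cdot)$, namely the function on $\rr$ whose Fourier transform coincides with the discrete Fourier transform of $\psi(t,\cdot)$ on $[-\pi/h,\pi/h]$ and vanishes elsewhere; thus $\tilde\psi(t,\cdot)$ is band-limited to $[-\pi/h,\pi/h]$ and satisfies $\tilde\psi(t,mh)=\psi(t,mh)$. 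Since interpolation is bounded from $l^r(h\zz)$ into $L^r(\rr)$, the map $t\mapsto\tilde\psi(t,\cdot)$ is measurable and $\tilde\psi\in L^q(\rr,L^r(\rr))$.

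Two elementary facts then do all the work. First, the cut-off projector $P_j$ acts on the Fourier side only, so it commutes with interpolation: $(P_j\psi)\,\tilde{}\,=P_j\tilde\psi$, the right-hand side is still supported in $[-\pi/h,\pi/h]$, and evaluating on the grid gives $\|P_j\tilde\psi(t)\|_{l^r(h\zz)}=\|P_j\psi(t)\|_{l^r(h\zz)}$ for a.e. $t$. Second, by the scaled Plancherel--Polya inequality \eqref{norm.equiv}, for every $p\in(1,\infty)$ there are $h$-independent constants with $A(p)^{1/p}\|g\|_{l^p(h\zz)}\le\|g\|_{L^p(\rr)}\le B(p)^{1/p}\|g\|_{l^p(h\zz)}$ for every $g$ band-limited to $[-\pi/h,\pi/h]$. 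Applying the latter with $p=r$ pointwise in $t$ to $\tilde\psi(t,\cdot)$ and to each $P_j\tilde\psi(t,\cdot)$, and then taking the $L^q$-norm in $t$, I obtain
\[
A(r)^{1/r}\|\psi\|_{L^q(\rr,l^r(h\zz))}\le\|\tilde\psi\|_{L^q(\rr,L^r(\rr))}\le B(r)^{1/r}\|\psi\|_{L^q(\rr,l^r(h\zz))},
\]
together with the same two-sided bound relating $\|P_j\psi\|_{L^q(\rr,l^r(h\zz))}$ and $\|P_j\tilde\psi\|_{L^q(\rr,L^r(\rr))}$, all constants uniform in $h$.

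With these equivalences in hand the two statements are immediate. For \eqref{paley-direct}, with $2\le r<\infty$ and $2\le q\le\infty$, I bound $\|\psi\|_{L^q(\rr,l^r(h\zz))}^2$ by a constant times $\|\tilde\psi\|_{L^q(\rr,L^r(\rr))}^2$, invoke \eqref{paley.1} for $\tilde\psi$, and convert each term $\|P_j\tilde\psi\|_{L^q(\rr,L^r(\rr))}^2$ back into $\|P_j\psi\|_{L^q(\rr,l^r(h\zz))}^2$; the accumulated constant depends only on $r$ and on the implicit constant in \eqref{paley.1}, hence is independent of $h$. Estimate \eqref{inverse.paley} follows identically from \eqref{paley.2}, for $1<r<2$ and $1\le q\le2$; the endpoint $r=1$ is excluded here because \eqref{norm.equiv} requires $p>1$, and it is not needed in the sequel.

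I do not expect a genuine obstacle: the entire content is that interpolation to $\rr$ is an isomorphism, uniformly in $h$, between the relevant discrete and continuous Paley--Littlewood pieces, after which \eqref{paley.1}--\eqref{paley.2} apply verbatim. The only points deserving attention are the verification that $P_j$ truly commutes with interpolation --- which relies on $\eta_j$ being a Fourier multiplier and on $P_j$ annihilating functions band-limited to $[-\pi/h,\pi/h]$ once $2^{j-1}>\pi/h$, so that the sums over $j$ are actually finite for fixed $h$ --- and the $h$-independence of the constants $A(r),B(r)$, which is precisely the scaling statement \eqref{norm.equiv}.
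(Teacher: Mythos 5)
Your argument is correct and is essentially the paper's own proof: the paper obtains the discrete statement in exactly this way, by applying the continuous inequalities \eqref{paley.1}--\eqref{paley.2} together with the scaled Plancherel--P\'olya equivalence \eqref{norm.equiv} to functions whose Fourier transform is supported in $[-\pi/h,\pi/h]$. Your explicit handling of the band-limited interpolator, the commutation of $P_j$ with interpolation, and the caveat at the endpoint $r=1$ merely spell out what the paper leaves implicit in its one-line justification.
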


%
%
%
%
%
%
%
%
%
%

\subsection{Operators on $l^p(h\zz)$-spaces.}
In the following we  apply the results of the previous section to the particular case $X=h\zz$.
We consider operators $A_h$with symbol $a_h:[-\pi/h,\pi/h]\rightarrow \cn$ such that
$$(A_h\varphi)_j=\int _{-\pi/h}^{\pi/h} e^{ij\xi h}a_h(\xi)\hat \varphi (\xi)d\xi,\,j\in\zz.$$
Also we will consider the operator
$|\nabla|^s$ acting on discrete spaces $l^2(h\zz)$ whose symbol is given by $|\xi|^s$. 

The numerical schemes we shall consider, associated to regular meshes, will enter in this frame by means of the Fourier representation formula of solutions.


\begin{teo}\label{error}
Let $A_h,B_h:l^2(h\zz)\rightarrow l^2(h\zz)$ be two operators whose symbols are $a_h$ and $b_h$, $i b_h$ being a real function, such that
the semigroups they generate, $(S_{A_h}(t))_{t\geq 0}$ and $(S_{B_h}(t))_{t\geq 0}$, satisfy  assumptions  \eqref{l2-stable} and \eqref{linfty.dis} with some
 constant $C$, independent of $h$. Finally, assume that for some functions $\{\mu(k,h)\}_{k\in F}$, with $F$ a finite set, the following
 holds for all $\xi\in [-\pi/h,\pi/h]$:
 \begin{equation}\label{hyp.0}
    |a_h(\xi)-b_h(\xi)|\leq \sum _{k\in F}\mu(k,h)|\xi|^k.
\end{equation}
For any $s>0$, denoting
\begin{equation}\label{eps}
    \eps(s,h)=\sum _{k\in F} \mu(k,h)^{\min\{s/k,1\}},
\end{equation}
the following hold for all $(q,r)$, $(\tq,\tr)$, $\alpha$-admissible pairs:\\
a) There exists a positive constant $C(q)$ such that
\begin{equation}\label{est.200}
 \|S_ {A_h}(t)\varphi-S_{B_h}(t)\varphi\|_{L^q(I,\,l^r(h\zz))}\leq C(q)\eps(s,h)\max\{1,|I|\}\|\varphi\|_{B^s_{2,2}(h\zz)}
\end{equation}
holds for all $\varphi\in B^s_{2,2}(h\zz)$ uniformly in $h>0$.\\
b) There exists a positive constant $C(s,q,\tq)$ such that
\begin{align}\label{est.201}
     \Big\|\int
    _{0}^t S_{A_h}(t-\sg)f(\sg)d\sg-&\int    _{0}^tS_{B_h}(t-\sg)f(\sg)d\sg\Big\|_{L^q(I,\,l^r(h\zz))}\\[5pt]
  \nonumber  &\leq C(s,q,\tq)\eps(s,h)\max\{1,|I|\}\|f\|_{L^{\tilde{q}'}(I,\,B^s_{\tilde{r}',2}(h\zz))}
\end{align}
holds for all $f\in L^{\tilde{q}'}(I,\,B^s_{\tilde{r}',2}(h\zz))$.
\end{teo}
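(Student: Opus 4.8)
The plan is to invoke Theorem~\ref{diff} with $X=h\zz$ after a Littlewood--Paley decomposition in the space variable, so that on each dyadic frequency block the symbol difference $a_h-b_h$ can be replaced, up to a constant, by a single power $2^{jk}$ of the frequency. Since $A_h$ and $B_h$ are Fourier multipliers supported on the band $[-\pi/h,\pi/h]$, they are, for each fixed $h$, bounded and m-dissipative on $l^2(h\zz)$, they commute with one another and with every projector $P_j$ of \eqref{cutoff}, and $D(A_h)=D(B_h)=l^2(h\zz)$; hence Theorem~\ref{diff}, Lemma~\ref{discrete.paley} and the uniform projector bound \eqref{est.projection} are all at our disposal. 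The preliminary step is an elementary scalar inequality: writing $\delta_j:=|I|\sum_{k\in F}\mu(k,h)2^{jk}$ (with $2^{jk}$ read as a fixed constant when $j=0$), for every $s>0$ and $j\geq 1$,
\begin{equation*}
\min\{1,\delta_j^2\}\leq C(F,s)\,\eps(s,h)^2\,\max\{1,|I|\}^2\,2^{2js},
\end{equation*}
and the same with $2^{2js}$ replaced by $1$ when $j=0$. One proves this by splitting the sum over $F$ through $\min\{1,(\sum_k b_k)^2\}\leq|F|^2\sum_k\min\{1,b_k^2\}$, then handling a single exponent $k$ (with $M:=|I|\mu(k,h)2^{jk}$) via $\min\{1,M^2\}\leq M^2$ when $s\geq k$ and $\min\{1,M^2\}\leq M^{2s/k}$ when $s<k$, and finally using $\sum_{k\in F}\mu(k,h)^{2\min\{s/k,1\}}\leq\eps(s,h)^2$.

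For part a), write $\varphi=\sum_{j\geq0}P_j\varphi$, a finite sum since $\hat\varphi$ is supported in $[-\pi/h,\pi/h]$. As $P_j$ commutes with both semigroups, \eqref{paley-direct} gives
\begin{equation*}
\|S_{A_h}(t)\varphi-S_{B_h}(t)\varphi\|_{L^q(I,l^r(h\zz))}^2\lesssim\sum_{j\geq0}\|S_{A_h}(t)P_j\varphi-S_{B_h}(t)P_j\varphi\|_{L^q(I,l^r(h\zz))}^2 .
\end{equation*}
Applying Theorem~\ref{diff}~i) to each $P_j\varphi$ and using that the Fourier transform of $P_j\varphi$ is supported where $|\xi|\lesssim 2^j$, hypothesis \eqref{hyp.0} together with Plancherel yields $\|(A_h-B_h)P_j\varphi\|_{l^2(h\zz)}\leq C(F)\big(\sum_{k\in F}\mu(k,h)2^{jk}\big)\|P_j\varphi\|_{l^2(h\zz)}$, so the $j$-th summand is at most $C(q,F)^2\min\{1,\delta_j^2\}\|P_j\varphi\|_{l^2(h\zz)}^2$. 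Inserting the scalar inequality and summing in $j$ identifies the right-hand side with the square of the $B^s_{2,2}(h\zz)$-norm of $\varphi$ (up to a constant), which is \eqref{est.200}.

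For part b), set $\Lambda f(t)=\int_0^tS_{A_h}(t-\sigma)f(\sigma)\,d\sigma-\int_0^tS_{B_h}(t-\sigma)f(\sigma)\,d\sigma$; $\Lambda$ commutes with $P_j$, so \eqref{paley-direct} reduces matters to estimating each $\|\Lambda P_jf\|_{L^q(I,l^r(h\zz))}$, and Theorem~\ref{diff}~ii) bounds this by $C(q,\tq)\min\{\|P_jf\|_{L^{\tq'}(I,l^{\tr'}(h\zz))},\,|I|\,\|(A_h-B_h)P_jf\|_{L^{\tq'}(I,l^{\tr'}(h\zz))}\}$. Granting, uniformly in $h$, the dyadic multiplier bound $\|(A_h-B_h)P_jg\|_{l^{\tr'}(h\zz)}\leq C(F)\big(\sum_{k\in F}\mu(k,h)2^{jk}\big)\|P_jg\|_{l^{\tr'}(h\zz)}$, the argument of part a) carries over: the $j$-th summand is $\leq C\min\{1,\delta_j^2\}\|P_jf\|_{L^{\tq'}(I,l^{\tr'}(h\zz))}^2$, and after inserting the scalar inequality, summing in $j$, and using Minkowski's inequality $\big(\sum_j2^{2js}\|P_jf\|_{L^{\tq'}(I,l^{\tr'}(h\zz))}^2\big)^{1/2}\lesssim\|f\|_{L^{\tq'}(I,B^s_{\tr',2}(h\zz))}$ (valid because $\tq'\leq 2$) one obtains \eqref{est.201}.

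The main obstacle is the dyadic multiplier bound used in part b). In part a) it costs nothing, since there the relevant exponent is $2$ and Plancherel applies; in part b), with $\tr'\neq 2$ in general, one must control the $l^{\tr'}$-operator norm of the block-localised multiplier $(a_h(\xi)-b_h(\xi))\eta_j(\xi)$, which \eqref{hyp.0} only controls in sup-norm (by $C(F)\sum_{k\in F}\mu(k,h)2^{jk}$ on its support $\{|\xi|\sim 2^j\}$). I would derive the operator bound from the discrete Marcinkiewicz multiplier theorem (Theorem~\ref{graf}), passing to the real line by means of the interpolator and \eqref{norm.equiv} and rescaling $\xi\mapsto 2^j\xi$; this is the point where one uses that the symbols associated with regular meshes are smooth, so that the rescaled symbol meets the Marcinkiewicz condition uniformly in $j$ and $h$. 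The admissible endpoint $r=\infty$, for which \eqref{paley-direct} is not available, needs a minor separate argument; everything else is routine bookkeeping with Littlewood--Paley square functions and the scalar inequality above.
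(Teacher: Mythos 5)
Your part a) follows the paper's own argument almost verbatim (Littlewood--Paley via \eqref{paley-direct}, Theorem~\ref{diff}~i) on each block, Plancherel for the $l^2$ bound on $(A_h-B_h)P_j$, the scalar inequality \eqref{est.j}, and summation), and it is correct. The problem is part b). There you apply Theorem~\ref{diff}~ii) blockwise and then need, for $\tr'\neq 2$, the operator bound $\|(A_h-B_h)P_jg\|_{l^{\tr'}(h\zz)}\lesssim\big(\sum_{k\in F}\mu(k,h)2^{jk}\big)\|P_jg\|_{l^{\tr'}(h\zz)}$. This is precisely the step the theorem's hypotheses do not support: \eqref{hyp.0} is a pure sup-norm bound on the symbol difference $a_h-b_h$ and gives no control whatsoever on $\partial_\xi(a_h-b_h)$, so the Marcinkiewicz condition \eqref{mark} for the block-localised multiplier $(a_h-b_h)\eta_j$ cannot be verified, let alone with a constant of size $\sum_k\mu(k,h)2^{jk}$ uniformly in $j$ and $h$. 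Your appeal to ``the symbols associated with regular meshes are smooth'' silently adds a quantitative hypothesis of the form $|\partial_\xi(a_h-b_h)(\xi)|\lesssim\sum_k\mu(k,h)|\xi|^{k-1}$ that is not part of the statement; without it the Marcinkiewicz constant is uncontrolled and the whole rate $\eps(s,h)$ is lost. The paper itself flags exactly this obstruction (the remark after the statement: under \eqref{hyp.0} one cannot pass from $A_h-B_h$ to $\sum_k\mu(k,h)|\nabla|^k$ in $l^{\tr'}$-norms, except for $\tr'=2$ by Plancherel).

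The way the paper closes this gap is different from what you propose and is worth internalizing: instead of putting $(A_h-B_h)$ on the data in $l^{\tr'}$, one uses the identity \eqref{identity} to write $\Lambda_h f(t)=\int_0^tS_{A_h}(t-\sigma)\Lambda_{1h}(A_h-B_h)f(\sigma)\,d\sigma$, applies the inhomogeneous estimate \eqref{dis.stric.inhomg} for $S_{A_h}$ with the pair $(\tq',\tr')=(1,2)$, and then rewrites $\Lambda_{1h}(A_h-B_h)f(t)=S_{B_h}(t)(A_h-B_h)\int_0^tS_{B_h}(\sigma)^*f(\sigma)\,d\sigma$ using the group/adjoint identity $S_{B_h}(t-\sigma)=S_{B_h}(t)S_{B_h}(\sigma)^*$ --- this is exactly where the hypothesis that $ib_h$ is real enters. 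At that point $(A_h-B_h)$ acts only on an $l^2(h\zz)$ quantity, so \eqref{hyp.0} plus Plancherel applies, and the dual estimate \eqref{filsch401} for $S_{B_h}$ converts $\|\int_0^tS_{B_h}(\sigma)^*|\nabla|^kf\,d\sigma\|_{l^2}$ into $\||\nabla|^kf\|_{L^{\tq'}(I,l^{\tr'})}$; only then does one localise to blocks, where the needed $l^{\tr'}$ multiplier bound is the Bernstein-type estimate for the explicit smooth symbol $|\xi|^k$, not for $a_h-b_h$. So your outline is fine up to and including the reduction to the blockwise claim, but the key analytic step of part b) is missing and cannot be repaired by Theorem~\ref{graf} under the stated assumptions; you need the rearrangement through \eqref{identity}, \eqref{filsch401} and the self-adjointness-type identity for $S_{B_h}$ (or some equivalent device) to keep $(A_h-B_h)$ at the $l^2$ level.
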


\begin{remark}
The assumption that the semigroups $(S_{A_h}(t))_{t\geq 0}$ and $(S_{B_h}(t))_{t\geq 0}$, satisfy    \eqref{l2-stable} and \eqref{linfty.dis} with some constant $C$, independent of $h$, means that both of them are $l^2(h\zz)$-stable with constants that are independent of $h$ and
that the corresponding numerical schemes are dispersive.

Taking into account that both operators, $A_h$ and $B_h$, commute in view that they are associated to their symbols, the 
 hypotheses of Theorem \ref{diff} are fulfilled. They also commute with $|\nabla|$ and $P_j$ which are also defined by a Fourier symbol.

Assumption \eqref{hyp.0} on the operators $A_h$ and $B_h$ implies
$$\|(A_h-B_h)\varphi\|_{l^2(h\zz)}\lesssim \sum_{k\in F} a(k,h)\||\nabla |^k \varphi\|_{l^2(h\zz)}.$$
However, this assumption is not sufficient to obtain a similar estimate in $l^r(h\zz)$-norms,
$r\neq 2$. As we will see this will be an inconvenient in obtaining \eqref{est.201} as a consequence of
\eqref{dis.stric.inhomg.2}.

The requirement that $ib_h$ is a real function is needed to assure that the semigroup generated by $B_h$, $S_{B_h}$, satisfies
$$S_{B_h}(t-\sg)=S_{B_h}(t)S_{B_h}(-\sg)=S_{B_h}(t)S_{B_h}(\sg)^*,$$
identity which will be used in the proof.

In Section \ref{dis.lse} we will give examples of operators $A_h$ and $B_h$ verifying these hypotheses.
In all our estimates we will choose $b_h(\xi)=i\xi^2$ , which is the symbol of the continuous Schr\"odinger semigroup.
\end{remark}



\begin{proof}[Proof of Theorem \ref{error}]We divide the proof in two steps corresponding to the proof of \eqref{est.200} and
\eqref{est.201} respectively.

{\bf Step I. Proof of \eqref{est.200}.} We apply inequality \eqref{paley.1} to the difference
$S_{A_h}(t)\varphi-S_{B_h}(t)\varphi$:
\begin{equation*}
    \|S_{A_h}(t)\varphi- S_{B_h}(t)\varphi\|_{L^q(I; \,l^r(h\zz))}\leq  \Big(\sum _{j\geq 0}
\|P_j S_{A_h}(t)\varphi-P_j S_{B_h}(t)\varphi\|_{L^q(I, \,l^r(h\zz))}^2 \Big)^{1/2}.
\end{equation*}
Using that $P_j$ commutes with $S_{A_h}(\cdot)$ and $S_{B_h}(\cdot)$ we get:
\begin{equation}\label{est.101}
    \|S_{A_h}(t)\varphi- S_{B_h}(t)\varphi\|_{L^q(I; \,l^r(h\zz))}\leq  \Big(\sum _{j\geq 0}
\| \big(S_{A_h}(t)- S_{B_h}(t)\big)P_j\varphi\|_{L^q(I, \,l^r(h\zz))}^2 \Big)^{1/2}.
\end{equation}

In order to evaluate each term in the right hand side of \eqref{est.101} we apply estimate \eqref{dis.stric.2} to the difference   $S_{A_h}(\cdot)-S_{B_h}(\cdot)$ when acting on each
projection $P_j\varphi$. 
Thus,  using hypothesis \eqref{hyp.0} we obtain:
\begin{align}
\label{est.normala} \|S_ {A_h}(t)P_j\varphi -S_{B_h}&(t)P_j\varphi\|_{L^q(I,\,l^r(h\zz))}\\
  \nonumber  &\leq C(q)\max\{|I|,1\}\min\{\|P_j\varphi\|_{l^2(h\zz)},\|(A_h-B_h)P_j\varphi\|_{l^2(h\zz)}\}\\
  \nonumber  &\leq  C(q)\max\{|I|,1\}\min\Big\{\|P_j\varphi\|_{l^2(h\zz)},\sum _{k\in F}\mu(k,h)\||\nabla|^kP_j\varphi\|_{l^2(h\zz)}\Big\}\\
  \nonumber  &\leq C(q)\max\{|I|,1\}\sum _{k\in F} \min\Big\{\|P_j\varphi\|_{l^2(h\zz)},\mu(k,h)2^{jk}\|P_j\varphi\|_{l^2(h\zz)}\Big\}\\
  \nonumber &\leq C(q)\max\{|I|,1\}\|P_j\varphi\|_{L^2(\rr)} \sum _{k\in F}\min\Big\{1,
\mu(k,h)2^{jk}\Big\}.
\end{align}
Going back to estimate \eqref{est.101} we get
\begin{align*}
\|S_{A_h}(t)\varphi-& S_{B_h}(t)\varphi\|_{L^q(I; \,l^r(h\zz))}\\
&\leq C(q)\max\{|I|,1\} \Big(\sum _{j\geq 0}\|P_j\varphi\|_{L^2(\rr)}^2 \sum _{k\in F}\min\Big\{1,
\mu^2(k,h)2^{2jk}\Big\}\Big)^{1/2}.
\end{align*}
We claim that for any $j\geq 0$ the following holds
\begin{equation}\label{est.100}
\sum _{k\in F}
\min\Big\{1,
\mu^2(k,h)2^{2jk}\Big\}\leq \sum _{k\in F} \mu(k,h)^{\min\{2s/k,2\}}2^{2js}
\end{equation}
for all $s>0$.

Assuming for the moment that the claim \eqref{est.100} is correct we deduce that
\begin{align*}\|S_{A_h}(t)\varphi- &S_{B_h}(t)\varphi\|_{L^q(I; \,l^r(h\zz))}\\
&\leq  C(q)\max\{|I|,1\} \Big(\sum _{k\in F}\sum _{j\geq 0} \mu(k,h)^{\min\{2s/k,2\}}2^{2js}
\|P_j\varphi\|_{l^2(h\zz)}^2\Big)^{1/2}\\
&= C(q)\max\{|I|,1\})\Big(\sum _{k\in F}\mu(k,h)^{\min\{2s/k,2\}}\sum_{j\geq 0} 2^{2js}\|P_j \varphi\|^2_{l^2(h\zz)}\Big)^{1/2}\\
&\leq C(q,F)\max\{|I|,1\}\eps(s,h) \|\varphi\|_{B^s_{2,2}(\rr)}.
\end{align*}


We now prove \eqref{est.100} by showing that
\begin{equation}\label{est.j}
\min\{1,\mu 2^{jk}\}\leq \mu^{\min\{s/k,1\}}2^{js}
\end{equation} holds for all $\mu\geq 0$ and $j\geq 1$.
 It is obvious when $\mu\geq 1$. It remains to prove it in the case $\mu\leq 1$.
For any $|\xi|\geq 1$ we  have the following inequalities
\begin{align*}
\min\{1,\mu|\xi|^k\}&\leq \min\{1,\mu|\xi|^k\}^{\min\{s/k,1\}}=\min\{1,\mu^{\min\{s/k,1\}}|\xi|^{k\min\{s/k,1\}}\}\\
&\leq \mu^{\min\{s/k,1\}}|\xi|^{k\min\{s/k,1\}}\leq \mu^{\min\{s/k,1\}}|\xi|^s.
\end{align*}
Applying this inequality to $\xi=2^j$, $j\geq 0$, we get \eqref{est.j} and thus \eqref{est.100}. The proof of the
first step is now complete.


\medskip
{\bf Step II. Proof of  \eqref{est.201}.} Let us denote by $\Lambda_h$ the following operator:
$$\Lambda_h f(t)=\int_0^t  S_{A_h}(t-\sg)f(\sg)d\sg-\int    _{0}^tS_{B_h}(t-\sg)f(\sg)d\sg.$$
As in the case of the  homogenous estimate \eqref{est.200}, we  use a Paley-Littlewood decomposition of the
function $f$.
Inequality \eqref{paley-direct} and the fact that $\Lambda_h$ commutes with each projection $P_j$ give us
\begin{equation}\label{est.dir.paley}
    \|\Lambda_h f\|_{L^q(I,\,l^r(h\zz))}^2\leq c(q) \sum_{j\geq 0}\| P_j (\Lambda_h f)\|_{L^q(I,\,l^r(h\zz))}^2=
     c(q) \sum_{j\geq 0}\|\Lambda_h  (P_j f)\|_{L^q(I,\,l^r(h\zz))}^2.
\end{equation}
We claim that each term $\Lambda(P_j f)$ in the right hand side of  \eqref{est.dir.paley} satisfies:
\begin{align}\label{est.proj.inhom}
\|\Lambda_h &(P_j f)\|_{L^q(I,\,l^r(h\zz))}\\
\nonumber&\leq c(q,\tq)\max\{1,|I|\} \min\left\{\|P_j f\|_{L^{\tq'}(I,\,l^{\tr'}(h\zz))},\sum _{k\in F}\mu(k,h)
\||\nabla|^k P_j f\|_{L^{\tq'}(I,\,l^{\tr'}(h\zz))}\right\}.
\end{align}
In view of \eqref{est.j}, the above claim implies
\begin{align}\label{est.inmog.proj}
\|\Lambda_h &(P_j f)\|_{L^q(I,\,l^r(h\zz))}\\
\nonumber&\leq c(q,\tq)\max\{1,|I|\}\min\left\{\|P_j f\|_{L^{\tq'}(I,\,l^{\tr'}(h\zz))},\sum _{k\in F}\mu(k,h) 2^{jk}\| P_j f\|_{L^{\tq'}(I,\,l^{\tr'}(h\zz))}\right\}\\
\nonumber&=c(q,\tq)\max\{1,|I|\}\|P_j f\|_{L^{\tq'}(I,\,l^{\tr'}(h\zz))}\sum _{k\in F}\min\{1,\mu(k,h) 2^{jk}\}\\
\nonumber&\leq c(q,\tq)\max\{1,|I|\}\|P_j f\|_{L^{\tq'}(I,\,l^{\tr'}(h\zz))}\sum _{k\in F} \mu(k,h)^{\min\{s/k,1\}}2^{js}\\
\nonumber&\leq c(q,\tq)\max\{1,|I|\} \eps(s,h)2^{js}\|P_j f\|_{L^{\tq'}(I,\,l^{\tr'}(h\zz))}.
\end{align}
Estimates \eqref{est.dir.paley} and \eqref{est.inmog.proj} give us
\begin{equation}\label{laba.1}
 \|\Lambda_h f\|_{L^q(I,\,l^r(h\zz))}    \leq  c(q,\tq)\max\{1,|I|\}\eps(s,h)\Big(\sum_{j\geq 0} 2^{2js}
 \|P_j f\|_{L^{\tq'}(I,\,l^{\tr'}(h\zz))}^2  \Big)^{1/2}.
\end{equation}
Using that $\tq'\leq 2$, we can use the reverse Minkowski's inequality in $L^{\tilde q'/2}(I)$ to get

\begin{align*}\sum_{j\geq 0} 2^{2js}\|P_j f\|_{L^{\tq'}(I,\,l^{\tr'}(h\zz))}^2 & =
\sum_{j\geq 0} \big\| 2^{2js}\|P_j f\|^2_{l^{\tr'}(h\zz))}\big\|_{L^{\tilde q'/2}(I)}
\leq\Big\|  \sum_{j\geq 0} 2^{2js}\|P_j f\|^2_{l^{\tr'}(h\zz))}\Big\|_{L^{\tilde q'/2}(I)}
\\
&\lesssim
\Big\| \Big( \sum _{j\geq 0} 2^{2js} \|P_j f\|_{l^{\tr'}(h\zz)}^2 \Big) ^{1/2}\Big\|^2_{L^{\tq'}(I)}=
\|f\|^2_{L^{\tq'}(I,\,B^{s}_{\tr,2}(h\zz))}.
\end{align*}
By \eqref{laba.1} we get
$$\|\Lambda_h f\|_{L^q(I,\,l^r(h\zz))}    \leq c(q,\tq)\max\{1,|I|\}\eps(s,h) \|f\|_{L^{q'}(I,\,B^{s}_{\tr,2})}$$
which finishes the proof.

In the following we prove \eqref{est.proj.inhom}. Using that both operators $S_{A_h}$ and
$S_{B_h}$ fulfill  uniform  Strichartz estimates,
it is sufficient to prove that, under hypothesis \eqref{hyp.0}, the following estimate holds for all functions $f\in L^{\tq'}(I,\,l^{\tr'}(h\zz))$:
\begin{equation}\label{claim.304}
\|\Lambda_h  f\|_{L^q(I,\,l^r(h\zz))}\leq c(q,\tq)|I|\sum _{k\in F}a(k,h)
\||\nabla|^k  f\|_{L^{\tq'}(I,\,l^{\tr'}(h\zz))}.
\end{equation}
We point out that, in general, this estimate is not a direct consequence of \eqref{dis.stric.inhomg.2} since, under  assumption
\eqref{hyp.0}, we cannot establish the following inequality (of course, in the particular case $\tr'=2$ this can be obtained by Plancherel's identity)
$$\|(A_h-B_h)f\|_{L^{\tq'}(I,\,l^{\tr'}(h\zz))}\lesssim \sum _{k\in F}a(k,h)\||\nabla |^k f\|_{L^{\tq'}(I,\,l^{\tr'}(h\zz))}.$$
Identity \eqref{identity}  gives us that
$$\Lambda_h f(t)=
\int _0^t S_{A_h}(t-s)\Lambda _{1h} (A_h-B_h) f(s)ds$$
where
$$\Lambda _{1h}g(t)=\int_0^t    S_{B_h}(t-\sg)g(\sg)d\sg.$$
The inhomogeneous estimate \eqref{dis.stric.inhomg} with $(\tq',\tr')=(1,2)$ shows that
\begin{equation}\label{laba.3}
\|\Lambda_h f\|_{L^q(I,\,l^r(h\zz))}\leq c(q)\left\|\Lambda _{1h} (A_h-B_h)f\right\|_{L^1(I,\,l^2(h\zz))}.
\end{equation}
Using that ${B_h}$ satisfies
$S_{B_h}(t-\sg)=S_{B_h}(t)S_{B_h}(-\sg)=S_{B_h}(t)S_{B_h}(\sg)^*$ and that it commutes with $A_h$
we get
$$\Lambda _{1h} (A_h-B_h)f(t)=S_{B_h}(t)({A_h}-{B_h})\int_0^t S_{B_h}(\sg)^* f(\sg)d\sg.$$
Thus, using the uniform stability property, with respect to $h$, of the operators $S_{B_h}$:
 $$\|S_{B_h}(\cdot)\|_{l^2{(h\zz)}\rightarrow \,l^2(h\zz)}\lesssim 1$$ and hypothesis \eqref{hyp.0} we get
\begin{align}
\label{est.300}\|\Lambda _{1h} (A_h-B_h) f\|_{L^1(I,\,l^2(h\zz))}&\leq \Big\|({A_h}- {B_h})\int_0^t S_{B_h}(\sg)^*f(\sg)d\sg\Big\|_{L^1(I,\,l^2(h\zz))}\\
\nonumber&\leq \sum_{k\in F}a(k,h)\left\||\nabla|^k\int_0^t S_{B_h}(\sg)^* f(\sg)d\sg\right\|_{L^1(I,\,l^2(h\zz))}.
\end{align}
Using that ${B_h}$ and $|\nabla|$ commute, estimate \eqref{filsch401} with $U(\cdot)=S_{B_h}(\cdot)$ gives us that
\begin{align}
\nonumber\|\Lambda _{1h} (A_h-B_h) f\|_{L^1(I,\,l^2(h\zz))} &\leq |I|\sum_{k\in F}a(k,h)\left\|\int_0^t S_{B_h}(s)^* |\nabla|^kf(\sg)d\sg\right\|_{L^\infty(I,\,l^2(h\zz))}\\
\nonumber&\leq |I|\sum_{k\in F}a(k,h)\sup_{J\subset I}\left\|\int_{J} S_{B_h}(\sg)^* |\nabla|^kf(\sg)d\sg\right\|_{l^2(h\zz)}\\
\nonumber&\leq c(\tq)|I|\sum_{k\in F}a(k,h) \||\nabla|^k f\|_{L^{\tq'}(I,\,l^{\tr'}(h\zz)).}
\end{align}
Thus, by \eqref{laba.3} we obtain \eqref{claim.304} which finishes the proof.

\end{proof}

\section{Dispersive schemes for the linear Schr\"odinger equation}\label{dis.lse}
\setcounter{equation}{0}
In this section we  obtain error estimates for the numerical approximations of the linear Schr\"odinger equation.
We do this not only  in the $l^2(h\zz)$-norm but also in the auxiliary spaces that are needed  in the
analysis of the nonlinear Schr\"odinger equation.

\subsection{A general result.}
The numerical schemes we shall consider can all be
written in the abstract form
\begin{equation}\label{eq.dis}
    \left\{
\begin{array}{cc}
 \displaystyle i u^h_t(t) + A_h u^h=0, & t>0,  \\[8pt]
  \displaystyle u^h(0) =\th \varphi.
\end{array}
    \right.
\end{equation}
We assume that the operator $A_h$ is an approximation of the $1-d$ Laplacian. On the other hand, $\th \varphi$ is
an approximation of the initial data $\varphi$, $\th$ being a map from $L^2(\rr)$ into $l^2(h\zz)$  defined as follows:
\begin{equation}\label{def.th}
    (\th \varphi)(jh)=\int _{-\pi/h}^{\pi/h} e^{ijh \xi } \hat\varphi(\xi)d\xi.
\end{equation}
Observe that this operator acts by truncating the continuous Fourier transform of $\varphi$ on the interval
$(-\pi/h,\pi/h)$ and then    considering the discrete inverse Fourier transform on the
grid points $h\zz$. 

 To estimate the error committed  in the approximation of the LSE we  assume that the operator $A_h$, approximating the continuous Laplacian,
 has a symbol $a_h$ which satisfies
\begin{equation}\label{err.symbol}
    |a_h(\xi)-\xi^2|\leq \sum _{k\in F}a(k,h)|\xi|^k, \, \xi\in \left [-\frac\pi h,\frac\pi h\right ],
\end{equation}
for a finite set of indexes $F$.
As we shall see, different approximation schemes enter in this class for
different sets $F$ and orders $k$.

This condition on the operator $A_h$ suffices to analyze the rate of convergence
 in the $L^\infty(-T,T;\,l^2(h\zz))$ norm. However, one of our main objectives in this paper is to analyze this
error in the auxiliary norms $L^q(-T,T;\,l^r(h\zz))$ which is necessary for addressing the NSE with rough initial data.
More precisely, we need to identify classes of approximating operators $A_h$ of  the $1-d$ Laplacian so that the semi-discrete
semigroup $\exp(itA_h)$ maps uniformly, with respect to parameter $h$, $l^2(h\zz)$ into those spaces.

In the following we  consider operators $A_h$ generating dispersive schemes which are $l^2(h\zz)$-stable
\begin{equation}\label{l2-stable-h}
\|\exp(it A_h)\varphi\|_{l^2(h\zz)}\leq C\|\varphi\|_{l^2(h\zz)}, \ \forall\, t\geq 0
\end{equation}
and  satisfy the uniform $l^1(h\zz)-l^\infty(h\zz)$ dispersive property:
\begin{equation}\label{linfty.dis-h}
   \|\exp(it A_h)\varphi\|_{l^\infty(h\zz)}\leq \frac{C}{|t|^{1/2}}\|\varphi\|_{l^1(h\zz)}, \ \forall\, t\geq 0,
\end{equation}
for all $h>0$ and for all $\varphi\in l^1(h\zz)$, where the above constant $C$ is independent of $h$.
We point out that \eqref{l2-stable-h} is the standard stability property while the second one, \eqref{linfty.dis-h}, holds
only for well chosen numerical schemes.

Applying Theorem \ref{error} to the operator $B_h$ whose symbol is $-i\xi ^2$ and to $iA_h$, $A_h$ being the approximation
of the Laplace operator with the symbol $a_h(\xi)$,
we obtain the following result.

\begin{teo}\label{est.lin}
Let $s\geq 0$, $A_h$ satisfying  \eqref{err.symbol}, \eqref{l2-stable-h}, \eqref{linfty.dis-h},  and $(q,r)$ and
 $(\tq,\tr)$ be two $1/2$-admissible pairs. Denoting
\begin{equation}\label{def-eps}
\eps(s,h)=\sum _{k\in F} a(k,h)^{\min\{s/k,1\}},
\end{equation}
 the following  hold:

a) There exists a positive constant $C(q)$ such that
\begin{equation}\label{rez.princ}
\|\exp(itA_h)\th\varphi-\th \exp(it\partial_x^2)\varphi\|_{L^q(0,T; \,l^r(h\zz))}\leq \max\{1,T\}C(q)
\eps(s,h) \|\varphi\|_{H^s(\rr)}
\end{equation}
holds for all $\varphi\in H^s(\rr)$, $T>0$ and $h>0$.

b) There exists a positive constant $C(q,\tq)$ such that
\begin{align}\label{est.inh}
    \Big\|\int_0^t \exp(i(t-\sg)A_h)\th f(\sg)d\sg-&\int _0^t \th
    \exp(i(t-\sg)\partial_x^2)f(\sg)d\sg\Big\|_{L^q(0,T;\,l^r(h\zz))}\\
   \nonumber &\leq C(q,\tq)\max\{1,T\}\eps(s,h)\|f\|_{L^{\tq'}(0,T;\,   B^{s}_{\tr',2}(\rr))},
\end{align}
holds for all $T>0$,  $f\in L^{\tq'}(0,T;\, B^{s}_{\tr',2}(\rr))$ and $h>0$.

\end{teo}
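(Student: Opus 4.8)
The plan is to read both estimates off from Theorem \ref{error}, after intertwining the truncation operator $\th$ of \eqref{def.th} with the continuous Schr\"odinger group. First I would fix the two operators to which Theorem \ref{error} is applied: in its notation, the operator with symbol $ia_h(\xi)$, whose semigroup is $S_{A_h}(t)=\exp(itA_h)$, and the operator $B_h$ with symbol $b_h(\xi)=-i\xi^2$, whose semigroup $S_{B_h}(t)$ multiplies the discrete Fourier transform by $e^{-it\xi^2}$ on $[-\pi/h,\pi/h]$. Then $ib_h(\xi)=\xi^2$ is real, as required; both are Fourier multipliers, so they commute, and they commute with $|\nabla|^s$ and with the projectors $P_j$. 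The semigroup $S_{A_h}$ satisfies the hypotheses of Theorem \ref{error} (namely \eqref{l2-stable} and \eqref{linfty.dis} with $H=l^2(h\zz)$, $\alpha=1/2$, uniformly in $h$) by the standing assumptions \eqref{l2-stable-h}--\eqref{linfty.dis-h}, i.e.\ because the scheme is dispersive; and $S_{B_h}$ is unitary on $l^2(h\zz)$ and satisfies $\|S_{B_h}(t)\varphi\|_{l^\infty(h\zz)}\le C|t|^{-1/2}\|\varphi\|_{l^1(h\zz)}$ uniformly in $h$, which follows from van der Corput's lemma applied to the kernel $\int_{-\pi/h}^{\pi/h}e^{i(mh\xi-t\xi^2)}\,d\xi$, whose phase has second $\xi$-derivative $-2t$, so that the bound does not see the length of the interval. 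Finally, the consistency estimate \eqref{err.symbol} is exactly hypothesis \eqref{hyp.0} with $\mu(k,h)=a(k,h)$, the symbols of the two generators differing by $i\big(a_h(\xi)\mp\xi^2\big)$, of the same modulus; hence the quantity $\eps(s,h)$ produced by Theorem \ref{error} is the one defined in \eqref{def-eps}.

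Next I would exploit the intertwining. Since $\th$ acts on the Fourier side by restricting $\hat\varphi$ to $[-\pi/h,\pi/h]$ and reinterpreting it as a discrete Fourier transform, and multiplication by $e^{-it\xi^2}$ commutes with this restriction, one has
\[
\th\,\exp(it\partial_x^2)\varphi=S_{B_h}(t)\,\th\varphi,\qquad t\in\rr,
\]
and likewise with $\varphi$ replaced by $f(\sigma)$ for $\sigma$ fixed. Thus the quantity inside the norm in \eqref{rez.princ} equals $\big(S_{A_h}(t)-S_{B_h}(t)\big)\th\varphi$, and the one in \eqref{est.inh} equals $\int_0^t\big(S_{A_h}-S_{B_h}\big)(t-\sigma)(\th f)(\sigma)\,d\sigma$. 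Applying Theorem \ref{error}(a) on $I=(0,T)$ to the discrete datum $\th\varphi$, and Theorem \ref{error}(b) to the discrete datum $\th f$, bounds these by $C\,\eps(s,h)\max\{1,T\}$ times $\|\th\varphi\|_{B^s_{2,2}(h\zz)}$, respectively $\|\th f\|_{L^{\tq'}(0,T;\,B^s_{\tr',2}(h\zz))}$.

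It then remains to dominate these discrete Besov norms by the continuous ones, uniformly in $h$: $\|\th g\|_{B^s_{p,2}(h\zz)}\le C(p)\|g\|_{B^s_{p,2}(\rr)}$ for $1<p<\infty$ (with $p=2$ for (a) and $p=\tr'$ for (b)). Since $\widehat{P_j\th g}=\mathbf 1_{[-\pi/h,\pi/h]}\,\eta_j\hat g$, three cases occur: if $2^{j+1}\le\pi/h$ the indicator is redundant and $P_j\th g$ is the sampling on $h\zz$ of the band-limited function $P_j g$, so \eqref{norm.equiv} gives $\|P_j\th g\|_{l^p(h\zz)}\le B(p)^{1/p}\|P_j g\|_{L^p(\rr)}$; if $2^{j-1}>\pi/h$ then $P_j\th g=0$; and for the at most three remaining values of $j$ one has $\|P_j\th g\|_{l^p(h\zz)}\le B(p)^{1/p}\big\|\big(\mathbf 1_{[-\pi/h,\pi/h]}\eta_j\hat g\big)^\vee\big\|_{L^p(\rr)}\lesssim\|P_j g\|_{L^p(\rr)}$, the last step because the sharp cut-off $\mathbf 1_{[-\pi/h,\pi/h]}$ is, up to scaling, a fixed Fourier multiplier bounded on $L^p(\rr)$. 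Squaring, weighting by $2^{2js}$ and summing in $j$ gives $\|\th g\|_{B^s_{p,2}(h\zz)}\lesssim\|g\|_{B^s_{p,2}(\rr)}$, and for $p=2$ one has the standard equivalence $\|g\|_{B^s_{2,2}(\rr)}\simeq\|g\|_{H^s(\rr)}$. Inserting these into the bounds of the previous paragraph yields \eqref{rez.princ} and \eqref{est.inh}.

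The substantive work being already contained in Theorem \ref{error}, the only slightly delicate points here are the uniform-in-$h$ dispersive bound for $S_{B_h}$ (the van der Corput estimate above) and the uniform boundedness of $\th$ on the Besov scale, in particular the treatment of the frequencies sitting near the cut-off $\pi/h$ and, in part (b), the borderline case where $\tr'$ is close to $1$.
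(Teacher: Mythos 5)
Your proposal is correct and takes essentially the same route as the paper, which obtains Theorem \ref{est.lin} simply by invoking Theorem \ref{error} with $iA_h$ and the operator of symbol $-i\xi^2$, whose semigroup satisfies $S_{B_h}(t)\th\varphi=\th\exp(it\partial_x^2)\varphi$. The paper leaves implicit exactly the points you spell out (the uniform dispersive bound for $S_{B_h}$, the intertwining identity, and the uniform estimate $\|\th g\|_{B^s_{p,2}(h\zz)}\lesssim\|g\|_{B^s_{p,2}(\rr)}$ together with $B^s_{2,2}(\rr)\simeq H^s(\rr)$), so your write-up is a faithful, more detailed version of the paper's argument.
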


\begin{remark}In the  particular case when $(q,r)=(\infty,2)$ and  the set $F$ of indices $k$ entering in the definition \eqref{def-eps} of
$\eps(s,h)$ is reduced to a simple element, the statements in this Theorem are proved in \cite{MR1005330} (Theorem 10.1.2, p. 201):
\begin{equation}\label{est.linf.l2}
\|\exp(itA_h)\th\varphi-\th \exp(it\partial_x^2)\varphi\|_{L^\infty(0,T; \,l^2(h\zz))}\leq C(q)T
\eps(s,h) \|\varphi\|_{H^s(\rr)}.
\end{equation}

%
\end{remark}


\begin{remark}
Observe that for $s\geq s_0=\max\{k:\,k\in F\}$ the function $s\rightarrow \eps(s,h)$ is independent of the $s$-variable:
$$\eps(s,k)=\eps(s_0,k)=\sum _{k\in F} a(k,h).$$ This means that imposing more  than $H^{s_0}(\rr)$ regularity on the initial
data does not improve the order of convergence in \eqref{rez.princ} and  \eqref{est.inh}.
\end{remark}

\begin{remark}In the case $0\leq s\leq s_0$, with $s_0$ as above, the  estimate $H^{s_0}(\rr)\rightarrow L^\infty(0,T;\,l^2(h\zz))$ in \eqref{rez.princ}
and the one given by the stability of the scheme
 $L^2(\rr)\rightarrow L^\infty(0,T;\,l^2(h\zz))$, allow  to obtain, using  an  interpolation argument, a weaker estimate:
$$\|\exp(itA_h)\th \varphi-\th \exp(it\partial_x^2)\varphi\|_{L^\infty(0,T;\,l^2(h\zz))}
    \leq C(T)
\eps(s_0,h)^{s/s_0}\|\varphi\|_{H^s(\rr)}.$$
If the set  $F$ has an unique element then this estimate is equivalent to  \eqref{rez.princ}.
However,  the improved estimates \eqref{rez.princ} and \eqref{est.inh} cannot be proved without using  Paley-Littlewood's decomposition, as in the proof of Theorem \ref{error}.
\end{remark}

\subsection{Examples of operators $A_h$.}\label{examples}
In this section we will analyze various operators $A_h$ which approximate the $1-d$ Laplace operator $\partial_x^2$.


\textbf{Example 1. The 3-point conservative approximation.}
The simplest example of approximation scheme for  the Laplace operator $\partial_x^2$ is given by
 the classical finite difference approximation  $\Delta_h$
\begin{equation}\label{discrete.lap}
(\Delta _h u)_j
=\frac{u_{j+1}+u_{j-1}-2u_j}{h^2}.
\end{equation}
It satisfies  hypothesis \eqref{err.symbol} with $F=\{4\}$ and $a(4,h)=h^2$. Thus,  we are dealing with an approximation scheme of order two. Indeed, we have:
$$\Big|\frac {4}{h^2}\sin^2\Big(\frac {\xi h}2\Big)-\xi^2\Big|\lesssim h^2|\xi|^4,\, \forall\,\xi \in \Big[-\frac \pi h,\frac \pi h\Big].$$
%
%
However, this operator does not satisfy  \eqref{linfty.dis-h} with a constant $C$ independent of the mesh size
 $h$, (see \cite{1063.35016}, Theorem 1.1) and Theorem \ref{est.lin} cannot be applied. This means that we cannot  obtain the same estimate as for  second order dispersive schemes:
 \begin{equation}\label{est.dis.1}
 \|\exp(itA_h)\th\varphi-\th \exp(it\partial_x^2)\varphi\|_{L^q(0,T; \,l^r(h\zz))}\leq C(q,T)
 \|\varphi\|_{H^s(\rr)}\left\{
 \begin{array}{ll}
h^{s/2},&s\in (0,4),\\
h^2,&s >4.
 \end{array}
 \right.
 \end{equation}

 However, using the ideas of Brenner on the order of convergence in the $l^r(h\zz)$-norm, $r>2$, (\cite{MR0461121}, Ch. 6, Theorem 3.2, Theorem 3.3 and Ch.3, Corollary 5.1) we can get the following estimates:
 \begin{align*}\|\exp(itA_h)\th\varphi-&\th \exp(it\partial_x^2)\varphi\|_{L^q(0,T; \,l^r(h\zz))}\\[10pt]
 &\leq C(q,T)
 \|\varphi\|_{B^s_{r,\infty}(\rr)}\left\{
 \begin{array}{ll}
h^{\frac 12(s-1+\frac 2r)},&s\in (0,4+1-\frac 2r),\\[10pt]
h^2,&s \geq 4+1-\frac 2r,
 \end{array}
 \right.\\[10pt]
 &\leq C(q,T)
 \|\varphi\|_{H^{s+\frac 12-\frac 1r}(\rr)}\left\{
 \begin{array}{ll}
h^{\frac 12(s-1+\frac 2r)},&s\in (0,4+1-\frac 2r),\\[10pt]
h^2,&s \geq 4+1-\frac 2r,
 \end{array}
 \right.
 \end{align*}
 where we have used that $H^{s_0}(\rr)=B^{s_0}_{2,2}(\rr) \hookrightarrow B^s_{r,\infty}(\rr)$ when $s_0-1/2=s-1/r$.

 Observe that in the case $s\in (0,4)$ the above estimate guarantees that
 \begin{align}\label{est.nodis.1}
 \|\exp(itA_h)\th\varphi-&\th \exp(it\partial_x^2)\varphi\|_{L^q(0,T; \,l^r(h\zz))}\\
\nonumber &\leq C(q,T)\|\varphi\|_{H^{s+\frac 12-\frac 1r}(\rr)} h^{\frac 12(s+\frac 12-\frac 1r)}h^{-\frac 32(\frac 12-\frac 1r)}.
 \end{align}
Moreover for any $\sigma \in (1/2-1/r,4+1/2-1/r)$ we can find $s\in (0,4)$ with $\sigma=s+1/2-1/r$ and
using \eqref{est.nodis.1} we obtain
\begin{equation}\label{lossfactor}
 \|\exp(itA_h)\th\varphi-\th \exp(it\partial_x^2)\varphi\|_{L^q(0,T; \,l^r(h\zz))}\leq C(q,T)\|\varphi\|_{H^{\sigma}(\rr)} h^{\frac \sigma 2}h^{-\frac 32(\frac 12-\frac 1r)}.
 \end{equation}
 In the case of an approximation of order two one could expect  the error in the above estimate to be of order
  $h^{ \sigma/ 2}$ as in the $L^\infty(0,T;\, l^2(h\zz))$ case.  But, here we get an extra factor of order    $h^{-3/2( 1/2- 1/r)}$ which diverges unless $r=2$, which corresponds to the classical energy estimate in $L^\infty(0,T;\,L^2(\rr))$.
This does not happen in the case of a second order dispersive approximation of the Schr\"odinger operator, where Theorem \ref{est.lin} give us an order of error as in \eqref{est.dis.1}.

Note that, according to Theorem \ref{est.lin}, this loss in the rate of convergence is due to the lack of dispersive properties of the scheme.

Also we point out that to obtain an error of order $h^2$ in \eqref{est.nodis.1} we need to consider initial data in $H^{4+1-2/r}(\rr)$.  So we need to impose 
an extra regularity condition of $1-2/r$ derivatives on the initial data $\varphi$ to assure the same order of convergence as the one in  \eqref{est.dis.1} for dispersive schemes.

\textbf{Example 2. Fourier filtering of the 3-point conservative approximation.}
Another example is given by the spectral filtering
$\Delta_{h,\gamma}$ defined by:
\begin{equation}\label{spectral.delta}
    \Delta_{h,\gamma} \varphi =\Delta_h ({\bf 1}_{(-\frac {\gamma \pi} h,\frac {\gamma \pi} h)}\hat\varphi)^\vee, \, \gamma<\frac 1 2.
\end{equation}
In other words, $ \Delta_{h,\gamma} $ is a discrete operator whose action is as follows:
$$(\Delta_{h,\gamma} \varphi)_j=\int _{-\gamma \pi/h}^{\gamma \pi/h}\frac{4}{h^2}\sin^2\big(\frac {\xi h}2\big) e^{ijh\xi}\hat{\varphi}(\xi)d\xi,\, j\in \zz,$$
i.e. it has the symbol $$a_{h,\gamma}(\xi)= \frac{4}{h^2}\sin^2\big(\frac {\xi h}2\big){\bf 1}_{(-\gamma \pi/h,\gamma \pi/h)}. $$

In this case
$$|a_{h,\gamma}(\xi)-\xi^2|\leq c(\gamma)\left\{
\begin{array}{cc}
  h^2\xi ^4, & |\xi|\leq \pi \gamma/h, \\[10pt]
  \xi^2, &  |\xi|\geq \pi \gamma/h
\end{array}
\right.
\leq c(\gamma)h^2\xi^4\quad \text{for all}\quad\xi \in \Big[-\frac \pi h,\frac \pi h\Big].$$
Thus  $\Delta_{h,\gamma}$ constitutes an approximation of the Laplace operator $\Delta$ of order two and 
 the semigroup generated by $i\Delta _{h,\gamma}$ has uniform dispersive properties (see \cite{liv3}).
 Theorem
\ref{est.lin}, which exploits the dispersive character of the numerical scheme, gives us
 $$\|\exp(itA_h)\th\varphi-\th \exp(it\Delta)\varphi\|_{L^q(0,T; \,l^r(h\zz))}\leq C(q,T)
 \|\varphi\|_{H^s(\rr)}\left\{
 \begin{array}{ll}
h^{s/2},&s\in (0,4),\\
h^2,&s >4.
 \end{array}
 \right.
 $$
We note that using the same arguments based on  $l^r(h\zz)$-error estimates (given in \cite{MR0461121}),  as in the Example 1, we can obtain the same result only if $r=2$ or
assuming more regularity of the initial data $\varphi$.

This scheme, however, has a serious drawback to be implemented in nonlinear problems since it requires the Fourier filtering  to be applied on the initial data and also on the nonlinearity, which is computationally expensive.

%

\medskip
\textbf{Example 3. Viscous approximation.}
To overcome the lack of uniform $L^q(I,l^r(h\zz))$ estimates, in \cite{liv3} and \cite{liv.as.an} numerical schemes based in adding extra
numerical viscosity have been introduced.
The first possibility is to take $A_h=\Delta_h+ia(h)\Delta_h$  with $a(h)=h^{2-1/\alpha(h)}$ and $\alpha(h)\rightarrow 1/2$
such that $a(h)\rightarrow 0$. In this case \eqref{err.symbol} is satisfied as follows:
\begin{equation}\label{err.diss}
\Big|\frac{4}{h^2}\sin^2\big(\frac {\xi h}2\big)+ia(h)\frac{4}{h^2}\sin^2\big(\frac {\xi h}2\big)-\xi^2\Big|
\leq h^2\xi^4+a(h)\xi^2.
\end{equation}
This numerical approximation of the Schr\"odinger semigroup has been used in \cite{liv3} and \cite{liv-siam} to construct convergent
numerical schemes for the NSE. However, the special choice of  the function $a(h)$ that is required,  shows  that the error in the right hand side of \eqref{err.diss}
 goes to zero slower that any polynomial function of $h$ and thus, at least theoretically, the
 convergence towards LSE, and, consequently  to the NSE, will be very slow. Thus, we will not further analyze this scheme.

\medskip \textbf{Example 4. A higher order viscous approximation.}
A possibility to overcome the  drawbacks of the previous scheme, associated to
 the different behavior of the
$l^1(h\zz)-l^\infty(h\zz)$ decay rate of the solutions,  is
to choose higher order dissipative schemes as introduced in \cite{liv.as.an}:
\begin{equation}\label{diss.aprox}
    A_h=\Delta_h-ih^{2(m-1)}(-\Delta_h)^m, \, m\geq 2.
\end{equation}
In this case, hypothesis \eqref{err.symbol} reads:
\begin{equation}\label{est.diss}
\Big|\frac{4}{h^2}\sin^2\big(\frac {\xi h}2\big)+i h^{2(m-1)}\Big(\frac{4}{h^2}\sin^2\big(\frac {\xi h}2\big)\Big)^{m}-\xi^2\Big|
\leq h^2\xi^4+h^{2(m-1)}\xi^{2m}.
\end{equation}
Theorem \ref{est.lin} then guarantees  that for any $0\leq s\leq 4$ the following estimate holds:
\begin{align*}
\|\exp(it A_h)\th\varphi-\th \exp(it\Delta)\varphi\|_{L^q(0,T; \,l^r(h\zz))}&\leq
\max\{1,T\}(h^{s/2}+h^{(m-1)s/m})\|\varphi\|_{H^s(\rr)}\\
&\leq \max\{1,T\}  h^{s/2} \|\varphi\|_{H^s(\rr)}.
\end{align*}
Thus we obtain the same order of error as for the discrete Laplacian $A_h=\Delta_h$ but this time not only in the
$L^\infty(I;\, l^2(h\zz))$-norm but in all the auxiliary  $L^q(I,\,l^r(h\zz))$-norms.
We thus get the same optimal results as for the other dispersive scheme in Example 2 based on Fourier filtering.

\section{A two-grid algorithm}\label{two.grid.section}
\setcounter{equation}{0}
In this section we analyze one further 
 strategy introduced in  \cite{liv2}, \cite{liv3} to recover the uniformity of the dispersive properties.
It   is based on the two-grid algorithm that we now describe. We consider the standard conservative 3-point
approximation of the laplacian: $A_h=\Delta_h$.
 But, this time, in order to avoid the lack
of dispersive properties associated with the high frequency
components, the scheme will be restricted to the class
of slowly oscillatory data obtained by a two-grid algorithm. The main advantage of
this filtering method with respect to the Fourier one is that the
filtering can be realized in the physical space.

The method, inspired by \cite{0763.76042}, is roughly as
follows. We consider two meshes: the coarse one of size $4h$, $h>0$,
$4h\zz$, and the finer one, the computational one, $h\zz$, of
size $h>0$. The method relies basically on solving the
finite-difference semi-discretization  on the fine mesh
$h\zz$, but only for slowly oscillating  data, interpolated from
the coarse grid $4h\zz$. The $1/4$ ratio between
the two meshes is important to guarantee the dispersive properties of the
method. This particular structure of the data cancels the
pathology of the discrete symbol at the points $\pm \pi/2h$.

To be more precise  we  introduce the extension operator
$\bpi ^{4h}_h $ which associates to any function $\psi:4h\zz\rightarrow \cn$  a new function
$\bpi ^{4h}_h\psi:h\zz\rightarrow \cn$ obtained by an interpolation process:
$$(\bpi ^{4h}_h \psi)_j=({\bf P}_{4h}^1 \psi) (j h),\, j\in \zz,$$
where ${\bf P}_{4h}^1 \psi$ is the piecewise linear
interpolator of $\psi$.

 The semi-discrete method we propose is the
following:
\begin{equation}\label{two-grid}
    \left\{
\begin{array}{cc}
 \displaystyle i u^h_t(t) + \Delta_h u^h=0, & t>0,  \\
 \\
  \displaystyle u^h(0) =\bpi^{4h}_h \tph \varphi. &
\end{array}
    \right.
\end{equation}

The Fourier transform of the two-grid initial datum can be characterized as follows
(see Lemma 5.2, \cite{liv3}):
\begin{equation}\label{sym.twogrid}
(\bpi ^{4h}_h \tph \varphi)^\wedge(\xi)=m(h\xi )\widetilde{\tph\varphi}(\xi),\,\xi \in \left[-\frac \pi h,\frac\pi h\right],
\end{equation}
where $\widetilde{\tph\varphi}(\xi)$ is the extension by periodicity of the function $\widehat{\tph\varphi}$,
initially defined on $[-\pi/4h,\pi/4h]$, to the interval $[-\pi/h,\pi/h]$,
and
\begin{equation}\label{two.grid.mul}
m(\xi)=\left(\frac{e^{4i\xi}-1}{4(e^{i\xi}-1)}\right)^2, p\geq 2.
\end{equation}

The following result, proved in \cite{liv2}, guarantees that system \eqref{two-grid} is dispersive in the sense that the
discrete version of the Strichartz inequalities hold, uniformly on $h>0$.
\begin{teo}\label{multstr}
Let  $(q,r)$, $(\tq,\tr)$ be two  $1/2$-admissible pairs.
The following properties hold


i) There exists a positive constant $C(q)$ such that
\begin{equation}\label{mest2}
    \|e^{it\Delta _h}\bpi^{4h}_h \varphi\|_{L^q(\rr,\,l^r(h\zz))}\leq C(q)
    \|\bpi^{4h}_h\varphi\|_{l^2(h\zz)}
\end{equation}
uniformly on $h>0$.

%

ii) There exists a positive constant $C(d,r,\tr)$ such that
\begin{equation}\label{mest4}
 \left \|\int _{s<t} e^{i(t-s)\Delta _h}\bpi^{4h}_h
f(s)ds\right\|_{L^q(\rr,\,l^r(h\zz))}\leq C(q,\tq)\|\bpi^{4h}_h
f\|_{L^{\tilde{q}'}(\rr,\,l^{\tilde{r}'}(h\zz))}
\end{equation}
 for all $f\in L^{\tilde{q}'}(\rr,\,l^{\tilde{r}'}(4h\zz))$, uniformly in $h>0$.
\end{teo}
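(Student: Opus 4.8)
The plan is to recognize \eqref{mest2} and \eqref{mest4} as the homogeneous and inhomogeneous Strichartz estimates of Proposition \ref{strfil} for the semi-discrete evolution $e^{it\Delta_h}$ restricted to two-grid data, and to obtain them by the usual $TT^*$ argument once the two hypotheses of that proposition are verified uniformly in $h$. Concretely, I would apply Proposition \ref{strfil} to the family $U(t):=e^{it\Delta_h}\bpi^{4h}_h:l^2(4h\zz)\to l^2(h\zz)$. The energy estimate \eqref{l2-stable} is immediate and $h$-uniform: $e^{it\Delta_h}$ is unitary on $l^2(h\zz)$, so $\|U(t)g\|_{l^2(h\zz)}=\|\bpi^{4h}_h g\|_{l^2(h\zz)}$, and the Fourier representation \eqref{sym.twogrid}--\eqref{two.grid.mul} together with Plancherel on $[-\pi/h,\pi/h]$ gives $\|\bpi^{4h}_h g\|_{l^2(h\zz)}\lesssim\|g\|_{l^2(4h\zz)}$, uniformly in $h$, since $m(h\xi)$ is a fixed bounded multiplier and the periodization of $\hat g$ costs only a fixed constant. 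Granting the decay estimate \eqref{linfty.dis} discussed below, Proposition \ref{strfil} then produces \eqref{mest2} and the inhomogeneous bound, from which \eqref{mest4} follows after the elementary remark that $\|f\|_{l^{\tr'}(4h\zz)}\le 4^{1/\tr'}\|\bpi^{4h}_h f\|_{l^{\tr'}(h\zz)}$ (piecewise linear interpolation reproduces the coarse nodal values, so the coarse sum is dominated by the fine one).

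Hence the real content is to check \eqref{linfty.dis} uniformly in $h$, that is
$$\|U(t)U(s)^*g\|_{l^\infty(h\zz)}\le C\,|t-s|^{-1/2}\,\|g\|_{l^1(h\zz)},\qquad C\ \text{independent of }h .$$
Since $U(s)^*=(\bpi^{4h}_h)^*e^{-is\Delta_h}$, one has $U(t)U(s)^*=e^{it\Delta_h}\,\bpi^{4h}_h(\bpi^{4h}_h)^*\,e^{-is\Delta_h}$, whose matrix entries control the $l^1\to l^\infty$ norm; because $\bpi^{4h}_h$ is invariant only under shifts by $4h$, these entries split into finitely many oscillatory integrals of the form
$$\int_{-\pi/h}^{\pi/h}e^{\,i\ell h\xi-i(t-s)a_h(\xi)}\,\Phi(h\xi)\,d\xi,\qquad \ell\in\zz,\quad a_h(\xi)=\tfrac{4}{h^2}\sin^2\!\big(\tfrac{\xi h}{2}\big),$$
where $\Phi$ is a fixed bounded function arising from the symbol of $\bpi^{4h}_h(\bpi^{4h}_h)^*$, which by \eqref{two.grid.mul} vanishes to fourth order at $h\xi=\pm\pi/2$. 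After the substitution $\xi\mapsto\xi/h$ this becomes $\int_{-\pi}^{\pi}e^{\,i(\ell\xi-\tau\phi(\xi))}\Phi(\xi)\,d\xi$ with $\tau=4(t-s)/h^2$ and $\phi(\xi)=\sin^2(\xi/2)$, and the $h^{-1}$ prefactor from the change of variables combines with $|\tau|^{-1/2}=\tfrac12 h|t-s|^{-1/2}$ to leave the $h$-free quantity $|t-s|^{-1/2}$. Now $\phi''(\xi)=\tfrac12\cos\xi$ vanishes precisely at $\xi=\pm\pi/2$, the very points where $\Phi$ has its fourth-order zero (inherited from the double zeros of $m$). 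On the set where $|\phi''|\gtrsim1$ a stationary phase estimate gives the bound $|\tau|^{-1/2}$; near $\xi=\pm\pi/2$ I would decompose dyadically in the distance to the degenerate point --- on each shell $\phi''$ is comparable to that distance while $\Phi$ is comparable to a fixed positive power of it --- apply a rescaled van der Corput estimate on each shell, and sum the resulting geometric series over the dyadic scales, again obtaining $|\tau|^{-1/2}$; undoing the scaling gives the $h$-uniform decay.

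The hard part is exactly this last step: showing that the order of vanishing of $\Phi$ at $\pm\pi/2h$ dominates the loss produced by the simple zero of $\phi''$ there, and that the sum over dyadic shells converges. This is where the $1/4$ ratio between the coarse and the fine mesh is used in an essential way, since it is precisely what places the zeros of $m$ on top of the zeros of the second derivative of the discrete symbol; without that alignment one would recover only a degraded, $h$-dependent decay rate, which is the reason the plain scheme $A_h=\Delta_h$ fails to be dispersive (Example 1). This dispersive estimate is established in \cite{liv2}; once it is available, Proposition \ref{strfil} delivers \eqref{mest2} and \eqref{mest4} with constants independent of $h$, which completes the proof.
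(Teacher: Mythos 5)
The paper does not actually prove Theorem \ref{multstr}: it is imported verbatim from \cite{liv2} (``The following result, proved in \cite{liv2}, guarantees\dots''), so your proposal, which likewise rests on \cite{liv2} for the key $l^1\to l^\infty$ decay, is consistent with the paper's treatment, and your sketch of the underlying mechanism is the right one --- uniform $l^2$ bounds, plus time decay for $e^{it\Delta_h}\bpi^{4h}_h$ obtained from van der Corput estimates in which the double zeros of the multiplier $m$ at $h\xi=\pm\pi/2$ (hence a fourth-order zero of the symbol of $\bpi^{4h}_h(\bpi^{4h}_h)^*$) compensate the simple zero of the second derivative of the discrete symbol there, which is exactly the role of the $1/4$ mesh ratio --- followed by the Keel--Tao machinery of Proposition \ref{strfil}.

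One step in your deduction is glossed over, though. For the non-group family $U(t)=e^{it\Delta_h}\bpi^{4h}_h$ the retarded estimate that Keel--Tao actually provides controls $\int_{s<t}U(t)U(s)^*F(s)\,ds=\int_{s<t}e^{i(t-s)\Delta_h}\bpi^{4h}_h(\bpi^{4h}_h)^*F(s)\,ds$ with $F$ on the fine grid, whereas \eqref{mest4} involves $\int_{s<t}e^{i(t-s)\Delta_h}\bpi^{4h}_h f(s)\,ds$ with a single interpolation operator and $f$ on the coarse grid; your ``elementary remark'' comparing $\|f\|_{l^{\tr'}(4h\zz)}$ with $\|\bpi^{4h}_h f\|_{l^{\tr'}(h\zz)}$ fixes the right-hand side but not this operator mismatch on the left. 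It can be bridged by standard means --- either run the bilinear form of the Keel--Tao argument for the pair of families, using $\|e^{i\tau\Delta_h}\bpi^{4h}_h\|_{l^1(4h\zz)\to l^\infty(h\zz)}\lesssim|\tau|^{-1/2}$ together with the adjoint bound for $(\bpi^{4h}_h)^*e^{-i\tau\Delta_h}$, or write $\bpi^{4h}_h f=\bpi^{4h}_h(\bpi^{4h}_h)^*\big[\bpi^{4h}_h((\bpi^{4h}_h)^*\bpi^{4h}_h)^{-1}f\big]$ and check that the Gram operator $(\bpi^{4h}_h)^*\bpi^{4h}_h$ is uniformly invertible on $l^{\tr'}(4h\zz)$ --- but as written the passage from Proposition \ref{strfil} to \eqref{mest4} is not literal, and you should say which of these routes you take.
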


In the following lemma we  estimate the error introduced by the two-grid algorithm.
\begin{teo}\label{est.two.grid}
Let $s\geq 0$ and $(q,r)$, $(\tq,\tr)$ be two admissible pairs.

a) There exists a positive constant $ C(q,s)$ such that
\begin{align}\label{tg.1}
    \|\exp(it\Delta_h)\bpi ^{4h}_h \tph \varphi-&\th \exp(it\partial_x^2)\varphi\|_{L^q(I;\,l^r(h\zz))}\\[10pt]
   \nonumber &\leq C(q,s)
\max\{1,|I|\}\big(h^{\min\{s/2,2\}}+h^{\min\{s,1\}}\big)\|\varphi\|_{H^s(\rr)},
\end{align}
holds for all $\varphi\in H^s(\rr)$ and $h>0$.

b) There exists a positive constant $C(q,\tq,s)$ such that
\begin{align}\label{tg.inhom}
   \Big \|\int _{s<t}\exp(i(t-s)\Delta_h) &\bpi ^{4h}_h \tph f(s)ds-\int _{s<t} \th \exp(i(t-s)\partial_x^2)f(s)ds\Big\|_{L^q(I;\,l^r(h\zz))}\\[10pt]
\nonumber &\leq C(q,\tq,s) \max\{1,|I|\}\big(h^{\min\{s/2,2\}}+h^{\min\{s,1\}}\big)\|f\|_{L^{\tq'}(I;\,B^{s}_{\tr',2}(\rr))}.
\end{align}

\end{teo}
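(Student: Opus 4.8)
The plan is to separate the error of the two-grid interpolation of the initial datum from the error of the scheme acting on two-grid data. Let $B_h$ be the Fourier multiplier on $l^2(h\zz)$ with symbol $-i\xi^2$, so that $\exp(itB_h)\th\psi=\th\exp(it\partial_x^2)\psi$ for every $\psi$, and write
\begin{align*}
\exp(it\Delta_h)\bpi^{4h}_h\tph\varphi-\th\exp(it\partial_x^2)\varphi
&=\underbrace{\big(\exp(it\Delta_h)-\exp(itB_h)\big)\bpi^{4h}_h\tph\varphi}_{\text{(I)}}\\
&\quad+\underbrace{\exp(itB_h)\big(\bpi^{4h}_h\tph\varphi-\th\varphi\big)}_{\text{(II)}}.
\end{align*}
I expect (II) to produce the factor $h^{\min\{s,1\}}$ and (I) the factor $h^{\min\{s/2,2\}}$ in \eqref{tg.1}. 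I would prove a) in detail and obtain b) by the same scheme, using the inhomogeneous Strichartz estimates \eqref{mest4} in place of the homogeneous ones and closing the time integration with the reverse Minkowski inequality in $L^{\tilde q'/2}(I)$ as in Step II of the proof of Theorem \ref{error}, which puts $\|f\|_{L^{\tilde q'}(I;\,B^s_{\tilde r',2}(\rr))}$ on the right.

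For (II): since $(S_{B_h}(t))_{t\ge0}$ is the discrete free Schr\"odinger group, it satisfies the hypotheses of Proposition \ref{strfil} uniformly in $h$, so \eqref{dis.stric} gives $\|\text{(II)}\|_{L^q(I;\,l^r(h\zz))}\le C(q)\|\bpi^{4h}_h\tph\varphi-\th\varphi\|_{l^2(h\zz)}$, and the problem reduces to the approximation estimate $\|\bpi^{4h}_h\tph\varphi-\th\varphi\|_{l^2(h\zz)}\le C\,h^{\min\{s,1\}}\|\varphi\|_{H^s(\rr)}$. I would prove this in Fourier, using Lemma \ref{app18} and \eqref{sym.twogrid}. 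On $|\xi|\le\pi/4h$ one has $\widetilde{\tph\varphi}(\xi)=\hat\varphi(\xi)$, and writing $m(\xi)=e^{3i\xi}\sin^2(2\xi)/(16\sin^2(\xi/2))$ one gets $|m(\xi)-1|\lesssim\min\{1,|\xi|\}$; the linear term comes from the half-mesh phase $e^{3i\xi}$ carried by the interpolation, which is exactly why the rate here is $h$ and not $h^2$. Since $\min\{1,(h\xi)^2\}\le h^{2\min\{s,1\}}(1+|\xi|^2)^s$, this range contributes $\lesssim h^{2\min\{s,1\}}\|\varphi\|_{H^s}^2$. On $\pi/4h\le|\xi|\le\pi/h$ one bounds the genuine term by $|\hat\varphi(\xi)|$ (using $\int_{|\xi|\ge\pi/4h}|\hat\varphi|^2\lesssim h^{2s}\|\varphi\|_{\dot H^s}^2$) and the periodized spurious term by $|m(h\xi)|$ times a genuine low-frequency value, invoking the double zeros $|m(\xi)|\lesssim\operatorname{dist}(\xi,\tfrac\pi2\zz)^2$ of $m$ at $\pm\pi/2$ and $\pm\pi$; and on $|\xi|\ge\pi/h$ one again uses $\int|\hat\varphi|^2\lesssim h^{2s}\|\varphi\|_{\dot H^s}^2$. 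Collecting the three ranges gives the claim.

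For (I): here the dispersive character of the two-grid scheme, Theorem \ref{multstr}, is indispensable. I would decompose the coarse datum by a coarse Littlewood--Paley partition $\tph\varphi=\sum_{j\ge0}P_j\tph\varphi$, so that each $\bpi^{4h}_h P_j\tph\varphi$ is a genuine two-grid datum with coarse frequencies at scale $2^j$, and, as in the proof of Theorem \ref{error}, estimate each block $\|(\exp(it\Delta_h)-\exp(itB_h))\bpi^{4h}_h P_j\tph\varphi\|_{L^q(I;\,l^r(h\zz))}$ by the minimum of a dispersive bound $\lesssim\|P_j\tph\varphi\|_{l^2}$ --- from Theorem \ref{multstr}(i) for $\exp(it\Delta_h)\bpi^{4h}_h(\cdot)$ and the Strichartz estimate for $\exp(itB_h)$ --- and a consistency bound $\lesssim\max\{1,|I|\}\,h^2 2^{4j}\|P_j\tph\varphi\|_{l^2}$, reflecting that $\Delta_h$ approximates $\partial_x^2$ to second order, i.e. the symbol bound \eqref{err.symbol} with $F=\{4\}$, $a(4,h)=h^2$. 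Then $\min\{1,h^2 2^{4j}\}\le h^{\min\{s/2,2\}}2^{js}$ (inequality \eqref{est.j} with $k=4$, $\mu=h^2$), and summing with the discrete Paley--Littlewood inequality \eqref{paley-direct} --- legitimate since admissible $(q,r)$ have $q,r\ge2$ --- together with $\big(\sum_j 2^{2js}\|P_j\tph\varphi\|_{l^2}^2\big)^{1/2}\lesssim\|\tph\varphi\|_{B^s_{2,2}(4h\zz)}\lesssim\|\varphi\|_{H^s(\rr)}$, gives $\|\text{(I)}\|_{L^q(I;\,l^r(h\zz))}\lesssim\max\{1,|I|\}\,h^{\min\{s/2,2\}}\|\varphi\|_{H^s}$; with (II) this yields \eqref{tg.1}. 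The hard part will be the consistency bound, because $\Delta_h$ is \emph{not} dispersive: the naive route --- the identity \eqref{magic} for the commuting pair $\Delta_h,B_h$ followed by the $l^2$-stability of $\exp(it\Delta_h)$, as in the Remark after Theorem \ref{diff} --- fails, since $(\Delta_h-B_h)\bpi^{4h}_h P_j\tph\varphi$ is not itself of two-grid form and its spurious content near $\pm\pi/2h$ and $\pm\pi/h$ --- which is only $O(h^2)$ in $l^2$, thanks to the double zeros of $m$ --- is amplified to size $O(1)$ on multiplication by the symbol of $\Delta_h-B_h$ (which is $O(h^2|\xi|^4)$ near $\xi=0$ but of order $h^{-2}$ near $\pm\pi/h$), so that the two-grid structure cannot be detached from the difference operator. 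The remedy is to arrange the Duhamel representation so that the forcing term is, or is controlled by, a two-grid datum --- exploiting that, by \eqref{sym.twogrid}, $\bpi^{4h}_h$ intertwines the coarse-grid Schr\"odinger evolution with a fine-grid evolution generated by a $\pi/2h$-periodic symbol --- so that the two-grid inhomogeneous estimate Theorem \ref{multstr}(ii) can be invoked; making this precise, together with the elementary symbol comparisons it requires (all of the form $|\,\cdot\,|\lesssim h^2|\xi|^4$ on $[-\pi/h,\pi/h]$), is the technical core of the argument, and is precisely the place where the $1/4$ ratio between the two meshes is used a second time --- now inside the error analysis rather than in the dispersive estimate itself.
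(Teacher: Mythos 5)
Your splitting into (I) and (II) is exactly the paper's decomposition ($I_1$, $I_2$, with the auxiliary operator of symbol $\xi^2$), and your treatment of (II) matches the paper's: Strichartz for the exact-symbol discrete group reduces matters to $\|\bpi^{4h}_h\tph\varphi-\th\varphi\|_{l^2(h\zz)}\lesssim h^{\min\{s,1\}}\|\varphi\|_{H^s(\rr)}$, proved on the Fourier side via \eqref{sym.twogrid}, $|m(\xi)-1|\lesssim|\xi|$ near $0$ and the double zeros of $m$ at $\pm\pi/2,\pm\pi$ (the paper does $s=0,1$ and interpolates, you argue pointwise in $s$ -- immaterial). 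The genuine gap is in (I): the consistency estimate, which you yourself call ``the technical core'', is never proved; it is replaced by a plan (``rearrange Duhamel so the forcing is of two-grid form and invoke Theorem \ref{multstr}(ii)'') with no indication of how to carry it out, and this is precisely the step from which the factor $h^{\min\{s/2,2\}}$ must come. Moreover, the obstruction you hit is created by your own choice of doing the Littlewood--Paley decomposition on the coarse grid before interpolating: $\bpi^{4h}_hP_j\tph\varphi$ carries, besides its genuine frequencies of size $2^j$, spurious copies near $\pm\pi/(2h)$, $\pm\pi/h$ of $l^2$-size $\sim(h2^j)^2\|P_j\tph\varphi\|_{l^2(4h\zz)}$, and multiplying by the symbol of $\Delta_h-\Delta^h$ (of size $h^{-2}$ there) makes them of size $\sim2^{2j}\|P_j\tph\varphi\|_{l^2(4h\zz)}$, which is not $O(h^22^{4j})$ when $h2^j\ll1$; since $2^{2j}\geq1$, the blockwise minimum then degenerates to the dispersive bound for every $j$ and no power of $h$ survives. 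So, as written, part a) for term (I) -- and hence also b) -- is not established.

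The paper avoids this difficulty entirely: it regards $\bpi^{4h}_h\tph\varphi$ simply as an element of $B^s_{2,2}(h\zz)$ and applies Theorem \ref{error} to the pair $(\Delta_h,\Delta^h)$, so the Littlewood--Paley blocks are taken on the fine grid, after interpolation. For a fine block the symbol difference is only evaluated on that block's own frequency support, so $\|(\Delta_h-\Delta^h)P_j\bpi^{4h}_h\tph\varphi\|_{l^2(h\zz)}\lesssim h^22^{4j}\|P_j\bpi^{4h}_h\tph\varphi\|_{l^2(h\zz)}$ by Plancherel, and the consistency branch follows from \eqref{magic} using only the $l^2$-stability of $\exp(it\Delta_h)$ and the Strichartz estimates of $\exp(it\Delta^h)$ -- exactly the ``naive route'' of the Remark after Theorem \ref{diff} that you dismiss; the spurious high frequencies do not contaminate the consistency branch at all, they merely sit in the high blocks, where the dispersive branch is used and their size is absorbed through $\|\bpi^{4h}_h\tph\varphi\|_{B^s_{2,2}(h\zz)}\lesssim\|\varphi\|_{H^s(\rr)}$, together with \eqref{est.j} and \eqref{paley-direct}. (Your coarse-block variant does have the virtue that each interpolated block is literally of the form \eqref{sym.twogrid}, so Theorem \ref{multstr} applies verbatim in the dispersive branch, whereas the paper's fine blocks only retain the factor $m(h\xi)$; but this refinement costs you the consistency bound, which is where all the quantitative gain lies.) Finally, for b) your one-line plan overlooks that the analogue of (II) must now be measured in $l^{\tr'}(h\zz)$ with $\tr'\neq2$, so the Plancherel computation no longer applies: the paper bounds $\|\bpi^{4h}_h\tph f-\th f\|_{L^{\tq'}(I;\,l^{\tr'}(h\zz))}\lesssim h\|f\|_{L^{\tq'}(I;\,B^1_{\tr',2}(\rr))}$ via the Marcinkiewicz multiplier theorem (Theorem \ref{graf}) applied to $m$ and then interpolates in $s$; some multiplier argument of this kind is needed in your scheme as well.
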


\begin{remark} 
There are two error terms in the above estimates: $h^{\min\{s/2,2\}}$ and $h^{\min\{s,1\}}$. The first one comes from a second order numerical scheme generated by the approximation of the laplacian $\partial_x^2$ with $\Delta_h$  and the second one from the use of a two-grid interpolator. Observe that for  initial data
$\varphi\in H^s(\rr)$, $s\in (0,2)$ the results are the same as in the case of the second order schemes.
Also, imposing more than $H^2(\rr)$ regularity on the initial data does not improve the order of 
convergence. This is a consequence of the fact that the two-grid interpolator appears.
The multiplier $m(\xi)$  defined in \eqref{two.grid.mul}  satisfies $m(\xi)-1\simeq \xi$ as $\xi \sim 0$ and then the following estimate, which occurs in the proof of Theorem \ref{est.two.grid},
$$\int_{-\pi/4h}^{\pi/4h}|m(h\xi)-1|^2|\hat\varphi(\xi)|^2d\xi
\lesssim (h\|\varphi\|_{H^1(\rr)})^2,$$
cannot be improved by imposing more regularity on the function $\varphi$.
\end{remark}

\begin{proof}[Proof of Lemma \ref{est.two.grid}]
{\textbf{Case I. Proof of the homogenous estimate \eqref{tg.1}}.}
Let us consider $\Delta ^h$ acting on discrete functions as follows:
$$(\Delta^h\varphi)_j=\int _{-\pi/h}^{\pi/h}\xi^2 e^{ij\xi h}\hat\varphi(\xi)d\xi.$$
Note that $\Delta^h$ differs from the finite-difference approximation $\Delta_h$ on the fact that, in $\Delta^h$, $\xi^2$ replaces the symbol $4/h^2 \sin^2(\xi h/2)$ of $\Delta_h$.

In view of the definition of $\Delta^h$, we have $$\exp(it\Delta^h)\th \varphi=\th \exp(it\partial_x^2)\varphi.$$
Using the last identity, we write  
\begin{align*}
\exp(it\Delta_h)\bpi ^{4h}_h \th\varphi-\th \exp(it\partial_x^2)\varphi&=
\exp(it\Delta_h)\bpi ^{4h}_h \th\varphi-\exp(it\Delta^h)\th \varphi\\
&=I_1(t)+I_2(t)
\end{align*}
 where:
$$I_1(t)=\exp(it\Delta_h)\bpi ^{4h}_h \tph \varphi-\exp(it\Delta^h)\bpi ^{4h}_h \tph \varphi$$
and
$$I_2(t)=\exp(it\Delta^h)\bpi ^{4h}_h \tph \varphi-\exp(it\Delta^h)\th \varphi.$$ In the following we estimate each of them.

Applying Theorem \ref{error} to operators $\Delta_h$ and $\Delta^h$ we
get
\begin{align*}
\|I_1\|_{L^q(0,T;\,l^r(h\zz))}\leq h^{\min\{s/2,2\}} \max\{1,T\}\|\bpi ^{4h}_h \tph \varphi \|_{B^s_{2,2}(h\zz)}\leq
h^{\min\{s/2,2\}} \max\{1,T\}\|\varphi \|_{H^s(\rr)}.
\end{align*}


In the case of $I_2$ we claim that for any $s\geq 0$
\begin{equation}\label{est.i20}
\|I_2\|_{L^q(0,T;\,l^r(h\zz))}\leq h^{\min\{s,1\}}\|\varphi\|_{H^s(\rr)}.
\end{equation}

To prove this claim,  we remark that the operator $\exp(it\Delta^h)$ satisfies \eqref{l2-stable} and \eqref{linfty.dis}. Thus  Proposition \ref{strfil} guarantees that  $\exp(it\Delta^h)$ has 
 uniform Strichartz  estimates and
\begin{equation}\label{est.i2}
\|I_2\|_{L^q(0,T;\,l^r(h\zz))}\leq \|\bpi^{4h}_h\tph\varphi-\th\varphi\|_{l^2(h\zz)}.
\end{equation}
It is then sufficient to prove that
\begin{equation}\label{est.interpolatori}
\|\bpi^{4h}_h\tph\varphi-\th\varphi\|_{l^2(h\zz)}\leq h^{\min\{s,1\}}\|\varphi\|_{H^s(\rr)}
\end{equation}
holds for any $s\geq 0$. Actually it suffices to prove it for $0\leq s \leq 1$.
 Also the cases $s\in (0,1)$ follow by intepolation between the cases $s=0$ and $s=1$. 
We will consider now these two cases.

The case $s=0$ easily follows since 
$$\||\bpi^{4h}_h\tph\varphi\|_{l^2(h\zz)}\lesssim \|\tph\varphi\|_{l^2(4h\zz)}\lesssim \|\varphi\|_{L^2(\rr)}$$
and
$$\|\th\varphi\|_{l^2(h\zz)}\lesssim \|\varphi\|_{L^2(\rr)}.$$

We now prove \eqref{est.interpolatori} in the case $s=1$: 
\begin{equation}\label{case.s1}
\|\bpi^{4h}_h\tph\varphi-\th\varphi\|_{l^2(h\zz)}\lesssim h\|\varphi\|_{H^1(\rr)}.
\end{equation}
Using that 
$$\|\tph\varphi-\th\varphi\|_{l^2(h\zz)}\leq \Big(\int _{|\xi|\geq \pi/4h}|\hat\varphi(\xi)|^2d\xi\Big)^{1/2}\lesssim h\|\varphi\|_{H^1(\rr)},$$
it is sufficient to prove the following estimate
\begin{equation}\label{est.tg.100}
    \|\bpi^{4h}_h\tph\varphi-\tph\varphi\|_{l^2(h\zz)}\lesssim h\|\varphi\|_{H^1(\rr)}.
\end{equation}
The representation formula \eqref{sym.twogrid}   gives us that
\begin{align}
\label{est.600}\|\bpi^{4h}_h\tph \varphi-\tph\varphi\|_{l^2(h\zz)}^2\leq &\int_{-\pi/4h}^{\pi/4h}|m(h\xi)-1|^2|\hat\varphi(\xi)|^2d\xi\\
\nonumber&+\int_{\pi/4h\leq |\xi|\leq \pi/h}|m(h\xi)|^2 |\widetilde{\tph \varphi}(\xi)|^2d\xi.
\end{align}
Using that $|m(\xi)-1|\leq |\xi|$ for $\xi\in [-\pi/4,\pi/4]$ we obtain
\begin{equation}\label{est.602}
\int_{-\pi/4h}^{\pi/4h}|m(h\xi)-1|^2|\hat\varphi(\xi)|^2d\xi
\lesssim (h\|\varphi\|_{H^1(\rr)})^2.
\end{equation}
Previous results on the Fourier analysis of the two-grid method (see \cite{liv-jems}, Appendix B) and
the periodicity with period $\pi/2h$ of the function $\widetilde{\tph \varphi}(\xi)$  give us that
\begin{align*}
\int_{\pi/4h\leq |\xi|\leq \pi/h}&|m(h\xi)|^2 |\widetilde{\tph \varphi}(\xi)|^2d\xi=
\int_{\pi/4h\leq |\xi|\leq \pi/h}\left|\frac{e^{4i\xi h}-1}{4(e^{i\xi h}-1)}\right|^4|\widetilde{\tph \varphi}(\xi)|^2\\
&\lesssim \int_{\pi/4h\leq |\xi|\leq \pi/h}|{e^{4i\xi h}-1}|^4|\widetilde{\tph \varphi}(\xi))|^2
\lesssim \int_{-\pi/4h\leq \xi\leq \pi/4h} |{e^{4i\xi h}-1}|^4 |\widetilde{\tph \varphi}(\xi)|^2\\
&\lesssim \int_{-\pi/4h\leq \xi\leq \pi/4h} |\xi h|^4 |\widetilde{\tph \varphi}(\xi)|^2d\xi\lesssim
(h\|\varphi\|_{H^1(\rr)})^2.
\end{align*}

We obtain that \eqref{est.tg.100} holds and, consequently,  \eqref{case.s1} too. Thus \eqref{est.i20} is satisfied for any
positive $s$.

Observe that the   main term in the right
hand side of \eqref{est.600} is given by \eqref{est.602}, and this estimate  cannot be improved by imposing more than
$H^1(\rr)$ smoothness on $\varphi$.


\medskip
{\textbf{Case II. Proof of the inhomogeneous estimate \eqref{tg.inhom}}.}
We proceed as in the previous case by splitting the
difference we want to evaluate as
$$\int _{s<t}\exp(i(t-s)\Delta_h) \bpi ^{4h}_h \tph f(s)ds-\int _{s<t} \th \exp(i(t-s)\partial_x^2)f(s)ds=I_1+I_2$$
where
$$I_1=\int _{s<t} \big(\exp(i(t-s)\Delta_h) -\exp(i(t-s)\Delta^h)\big)\bpi ^{4h}_h\tph f(s)ds,$$
and
$$I_2=\int _{s<t} \exp(i(t-s)\Delta^h)(\bpi ^{4h}_h\tph f(s)-\th f(s))ds.$$

 In the case of $I_1$, applying Theorem \ref{error} to operators $\Delta_h$ and $\Delta^h$, we get
$$\|I_1\|_{L^q(0,T;l^r(h\zz))}\leq  h^{\min\{s/2,2\}}\max\{1,T\}\|\bpi ^{4h}_h\tph f\|_{L^{\tq'}(0,T;\,B^s_{\tr',2}(h\zz))}.$$
Applying Theorem \ref{graf} below to the multiplier $m$ given by \eqref{two.grid.mul}, for any $s>0$ we obtain that
$$\|\bpi ^{4h}_h\tph f\|_{L^{\tq'}(0,T;\,B^s_{\tr',2}(h\zz))}\leq \| f\|_{L^{\tq'}(0,T;\,B^s_{\tr',2}(\rr))}$$
and then $I_1$ satisfies:
\begin{equation}\label{est.inh.i1}
\|I_1\|_{L^q(0,T;l^r(h\zz))}\leq  h^{\min\{s/2,2\}}\max\{1,T\}\| f\|_{L^{\tq'}(0,T;\,B^s_{\tr',2}(\rr))}.
\end{equation}

In the case of $I_2$ we claim that
\begin{equation}\label{est.inh.i2}
\|I_2\|_{L^q(0,T;l^r(h\zz))}\leq h^{\min\{s,1\}}\| f\|_{L^{\tq'}(0,T;\,B^s_{\tr',2}(\rr))}.
\end{equation}
To prove this claim we consider the  cases $s=0$ and $s=1$.  When $s\in (0,1)$ we use interpolation between the previous ones. Also the case $s>1$ follows by using the embedding 
$B^s_{\tr',2}(\rr) \hookrightarrow B^1_{\tr',2}(\rr)$.

The case $s=0$ follows from  Proposition \ref{strfil} applied to the operators $U_h(t)=\th \exp(it \partial_x^2)$.

 We now consider the case $s=1$. Using Strichartz estimates given by Proposition \ref{strfil} to the operator $\exp(it\Delta^h)$ we get:
$$\|I_2\|_{L^q(0,T;l^r(h\zz))}\leq \|\bpi ^{4h}_h \tph f-\th f\|_{L^{\tq'}(0,T;\,l^{\tr'}(h\zz))}.$$
Theorem \ref{graf} applied to the multiplier $m$ gives us
$$\|\bpi ^{4h}_h \tph f-\tph f\|_{L^{\tq'}(0,T;\,l^{\tr'}(h\zz))}\leq h \| f\|_{L^{\tq'}(0,T;\,B^1_{\tr',2}(\rr))}$$
and
$$\|\tph f-\th f\|_{L^{\tq'}(0,T;\,l^{\tr'}(h\zz))} \leq h \| f\|_{L^{\tq'}(0,T;\,B^1_{\tr',2}(\rr))}.$$
Thus \eqref{est.inh.i2} holds for $s=1$, and in view of the above comments, for all $s\geq 0$.

Putting together \eqref{est.inh.i1} and \eqref{est.inh.i1} we obtain the inhomogeneous estimate \eqref{tg.inhom}.

 The proof is now complete.
\end{proof}

\section{Convergence of the dispersive method for the NSE}\label{non}
\setcounter{equation}{0}
In this section we  introduce  numerical schemes for the NSE based on dispersive approximations of the LSE. We  first
present some classical results on well-posedness and regularity of solutions of the NSE. Secondly we  obtain the order
of convergence for the approximations of the NSE described above.

\subsection{Classical facts on NSE}
We  consider the NSE with nonlinearity $f(u)=|u|^pu$ and $\varphi\in H^s(\rr)$. We are interested in the
case of $H^s(\rr)$ initial data with $s\leq 1$. The following well-posedness result is known.

\begin{teo}\label{exis.uniq}
Let $f(u)=|u|^pu$ with $p\in (0,4)$. Then

i) (Global existence and uniqueness, \cite{1055.35003}, Th. 4.6.1, Ch.~4, p.~109)\\ For any $\varphi\in L^2(\rr)$, there exists a unique global solution $u$
of \eqref{scha869} in the class
$$u\in C(\rr,L^2(\rr))\cap L^q_{loc}(\rr,L^r(\rr))$$
for all $1/2$-admissible pairs $(q,r)$ such that
$$\|u(t)\|_{L^2(\rr)}=\|\varphi\|_{L^2(\rr)},\, \forall \, t\in \rr.$$

ii) (Stability, \cite{1055.35003}, Th. 4.6.1, Ch.~4, p.~109) Let $\varphi$ and $\psi$ be two $L^2(\rr)$ functions, and    $u$ and $v$ the corresponding solutions of the NSE. Then
for any $T>0$ there exists a positive constant $C(T,\|\varphi\|_{L^2(\rr)},\|\psi\|_{L^2(\rr)})$ such that the following holds
\begin{equation}\label{est.nse.1}
    \|u-v\|_{L^\infty(0,T;\,L^2(\rr))}\leq C(T,\|\varphi\|_{L^2(\rr)},\|\psi\|_{L^2(\rr)})
\|\varphi-\psi\|_{L^2(\rr)}
\end{equation}

iii) (Regularity)
Moreover if $\varphi\in H^s(\rr)$, $s\in (0,1/2)$ then (\cite{1055.35003}, Theorem 5.1.1, Ch.~5, p.~147)
$$u\in C(\rr,H^s(\rr))\cap L^q_{loc}(\rr,B^s_{r,2}(\rr))$$for every admissible pairs
$(q,r)$.

Also  if $\varphi\in H^1(\rr)$ then $u\in C(\rr, H^1(\rr))$ (\cite{1055.35003}, Theorem 5.2.1, Ch.~5, p.~149).


\end{teo}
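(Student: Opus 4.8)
These assertions are classical; I outline how to prove them, following the fixed point method of Kato and Cazenave--Weissler (see \cite{1055.35003}).

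\emph{Part (i).} The plan is to set up a contraction on the Duhamel map
$$\Phi(u)(t)=S(t)\varphi-i\int_0^tS(t-\sg)\,|u(\sg)|^pu(\sg)\,d\sg$$
in the space $X_T=C([0,T];L^2(\rr))\cap L^q(0,T;L^r(\rr))$, choosing $r=p+2$ and $q=4(p+2)/p$, which is a $1/2$-admissible pair. Two algebraic facts drive the estimate: $(p+1)r'=r$, so $\||u|^pu\|_{L^{r'}(\rr)}=\|u\|_{L^r(\rr)}^{p+1}$, and $p<4\iff(p+1)q'<q$, so that Hölder in time yields $\||u|^pu\|_{L^{q'}(0,T;L^{r'}(\rr))}\lesssim T^{\theta}\|u\|_{L^q(0,T;L^r(\rr))}^{p+1}$ with some $\theta=\theta(p)>0$. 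Inserting this into the homogeneous and inhomogeneous Strichartz estimates \eqref{dsch0001} and \eqref{dis.stric.inhomg} — valid because $S(t)=e^{it\partial_x^2}$ obeys \eqref{energy} and \eqref{linfty} — I would check that $\Phi$ preserves a ball of $X_T$ and contracts it for $T$ small, depending only on $\|\varphi\|_{L^2(\rr)}$, using $\bigl|\,|u|^pu-|v|^pv\,\bigr|\lesssim(|u|^p+|v|^p)|u-v|$ for the Lipschitz bound. Since the conservation law $\|u(t)\|_{L^2(\rr)}=\|\varphi\|_{L^2(\rr)}$ — obtained by pairing the equation with $u$ and taking imaginary parts, after a regularization — keeps the local existence time bounded below along the iteration, the solution is global, and uniqueness in the stated class follows from the same estimate run on subintervals together with a Gronwall argument.

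\emph{Part (ii).} I would write the equation for $w=u-v$, split $[0,T]$ into finitely many subintervals on which the $L^qL^r$ norms of $u$ and $v$ are small enough to absorb the nonlinear term of $\Phi$, deduce $\|w\|_{X_{[t_j,t_{j+1}]}}\le C\|w(t_j)\|_{L^2(\rr)}$ on each, and iterate across the partition; the constant in \eqref{est.nse.1} then depends on $T$ and on $\|\varphi\|_{L^2(\rr)},\|\psi\|_{L^2(\rr)}$ through the number of subintervals required.

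\emph{Part (iii).} For $0<s<1/2$ I would rerun the contraction of part (i) in $C([0,T];H^s(\rr))\cap L^q(0,T;B^s_{r,2}(\rr))$; the step I expect to be the main obstacle is the nonlinear Besov estimate $\||u|^pu\|_{L^{q'}(0,T;B^s_{r',2}(\rr))}\lesssim T^{\theta}\|u\|_{L^q(0,T;B^s_{r,2}(\rr))}^{p+1}$, because for non-integer $p$ the nonlinearity $z\mapsto|z|^pz$ is only $C^{\min\{1,p+1\}}$ in the Hölder scale, so propagating $B^s$ regularity requires a fractional chain rule / paraproduct argument — admissible precisely since $s<1/2<\min\{1,p+1\}$. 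The $H^1(\rr)$ case is easier: in one space dimension $H^1(\rr)\hookrightarrow L^\infty(\rr)$ and $H^1(\rr)$ is an algebra, so $\||u|^pu\|_{H^1(\rr)}\lesssim\|u\|_{L^\infty(\rr)}^p\|u\|_{H^1(\rr)}$ and the fixed point closes in $C([0,T];H^1(\rr))$ using only the $L^2$-based estimates; globalization then uses conservation of mass together with the $H^1$ bound furnished by the conserved energy (or, for the focusing sign, the Gagliardo--Nirenberg inequality, since $p<4$). The only genuinely technical point in the whole argument is the Besov-space estimate of the nonlinearity for non-integer powers $p$.
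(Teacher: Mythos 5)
Your outline is essentially correct and coincides with the standard Strichartz fixed-point argument of Tsutsumi and Cazenave--Weissler; note that the paper itself does not prove this theorem but simply cites \cite{1055.35003} (Th.~4.6.1 for (i)--(ii), Th.~5.1.1 and 5.2.1 for (iii)), where exactly this scheme -- contraction in $C([0,T];L^2)\cap L^{q}(0,T;L^{p+2})$ with $q=4(p+2)/p$, mass conservation for globalization, interval-splitting for stability, and the Besov estimate of $|u|^pu$ (the paper's \eqref{normebs}, Cazenave's (4.9.20)) for $H^s$ regularity -- is carried out. The only caveat is your smoothness bookkeeping in part (iii): what matters is $s<1$ together with the $C^1$ (Hölder-$C^{1,\min\{p,1\}}$) character of $z\mapsto|z|^pz$, not the condition $s<\min\{1,p+1\}$ as phrased, but this does not affect the argument since $s<1/2$.
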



\begin{remark}
The embedding $B^{s}_{r,2}(\rr)\hookrightarrow W^{s,r}(\rr), r\geq 2$, (see \cite{1055.35003}, Remark 1.4.3, p. 14) guarantees that, in particular,
$u\in L^q_{loc}(\rr,W^{s,r}(\rr))$. Moreover, $f(u)\in L^{q'}_{loc}(\rr,B^{s}_{r',2}(\rr))$ and for any $0<s\leq 1$
(see \cite{1055.35003}, formula
(4.9.20), p.~128)
 \begin{equation}\label{normebs}
 \|f(u)\|_{L^{q'}(I,B^{s}_{r',2}(\rr))}\lesssim |I|^{\frac {4-p(1-2s)}4}\|u\|_{L^q(I,B^s_{r,2}(\rr))}^{p+1}.
 \end{equation}
\end{remark}

The fixed point argument  used to prove the existence and uniqueness result in Theorem \ref{exis.uniq}
gives us also quantitative information of the solutions of NSE in terms of the $L^2(\rr)$-norm
of the initial data. The following holds:

\begin{lemma}\label{stab.1}
Let $\varphi\in L^2(\rr)$ and $u$ be the solution of the NSE with initial data $\varphi$ and nonlinearity $f(u)=|u|^pu$, $p\in (0,4)$,
as in Theorem \ref{exis.uniq}. There exists $c(p)>0$ and $T_0=c(p)\|\varphi\|_{L^2(\rr)}^{-4p/(4-p)}$ such that
 for any $1/2$-admissible pairs $(q,r)$, there exists a positive constant $C(p,q)$ such that
\begin{equation}\label{est.nse}
    \|u\|_{L^{q}(I;\,L^r(\rr))}\leq C(p,q) \|\varphi\|_{L^2(\rr)}
\end{equation}
holds for all intervals $I$ with  $|I|\leq T_0$.
\end{lemma}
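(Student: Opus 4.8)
The plan is to run the standard contraction-mapping argument for the NSE in the Strichartz space, but to bookkeep the nonlinear term on a short time interval so that the nonlinear contribution is absorbed into a small multiple of the linear one. Fix the particular admissible pair used to close the fixed point; the natural choice for $f(u)=|u|^pu$ is $r_0 = p+2$ with $q_0$ determined by $1/q_0 = (1/2)(1/2-1/r_0)$, so that the pair $(q_0,r_0)$ is $1/2$-admissible and the exponents in Hölder's inequality match. First I would set up the integral formulation $u(t) = S(t)\varphi - i\int_0^t S(t-s)f(u(s))\,ds$ and, on an interval $I$ with $|I|\le T_0$, estimate the map $\Phi(u)$ in the norm $\|\cdot\|_{L^{q_0}(I;L^{r_0}(\rr))}$ using the homogeneous Strichartz estimate \eqref{dsch0001} for the linear part and the inhomogeneous one \eqref{dis.stric.inhomg} for the Duhamel part. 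The nonlinear term obeys $\||u|^pu\|_{L^{\tilde q_0'}(I;L^{\tilde r_0'}(\rr))}\lesssim |I|^{\theta}\|u\|_{L^{q_0}(I;L^{r_0}(\rr))}^{p+1}$ for a suitable dual admissible pair $(\tilde q_0,\tilde r_0)$ and an exponent $\theta = (4-p)/4 > 0$ coming from Hölder in time; this is the computation behind the scaling $T_0 = c(p)\|\varphi\|_{L^2}^{-4p/(4-p)}$.

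The key quantitative step is the following: let $R$ denote the Strichartz constant so that $\|S(\cdot)\varphi\|_{L^{q_0}(I;L^{r_0})}\le R\|\varphi\|_{L^2}$, and work in the ball $B = \{u : \|u\|_{L^{q_0}(I;L^{r_0})}\le 2R\|\varphi\|_{L^2}\}$. The nonlinear estimate gives $\|\Phi(u) - S(\cdot)\varphi\|_{L^{q_0}(I;L^{r_0})} \le C|I|^{\theta}(2R\|\varphi\|_{L^2})^{p+1}$, and choosing $T_0$ so that $C\,T_0^{\theta}(2R)^{p+1}\|\varphi\|_{L^2}^{p} \le R$ — which is exactly $T_0 = c(p)\|\varphi\|_{L^2}^{-p/\theta} = c(p)\|\varphi\|_{L^2}^{-4p/(4-p)}$ — forces $\Phi(B)\subset B$; the analogous Lipschitz estimate (using $\big||u|^pu - |v|^pv\big|\lesssim (|u|^p+|v|^p)|u-v|$ and Hölder) makes $\Phi$ a contraction on $B$, possibly after shrinking $c(p)$. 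The fixed point $u$ then satisfies $\|u\|_{L^{q_0}(I;L^{r_0})}\le 2R\|\varphi\|_{L^2}$, and this is \eqref{est.nse} for the pair $(q_0,r_0)$ with $C(p,q_0)=2R$.

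To obtain \eqref{est.nse} for an arbitrary $1/2$-admissible pair $(q,r)$, I would feed the solution just constructed back into the Duhamel formula: applying the homogeneous estimate \eqref{dsch0001} and the inhomogeneous estimate \eqref{dis.stric.inhomg} with output pair $(q,r)$ (and the same dual input pair $(\tilde q_0,\tilde r_0)$) gives $\|u\|_{L^{q}(I;L^{r})}\le C(q)\|\varphi\|_{L^2} + C(q,p)|I|^{\theta}\|u\|_{L^{q_0}(I;L^{r_0})}^{p+1}$, and since $\|u\|_{L^{q_0}(I;L^{r_0})}\le 2R\|\varphi\|_{L^2}$ and $|I|\le T_0 \sim \|\varphi\|_{L^2}^{-4p/(4-p)}$, the second term is again bounded by a constant times $\|\varphi\|_{L^2}$, yielding \eqref{est.nse} with a constant $C(p,q)$ depending only on $p$ and $q$. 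The main obstacle — really the only non-routine point — is checking the time-Hölder exponent $\theta=(4-p)/4$ and verifying it is strictly positive precisely when $p<4$, since this is what makes the smallness $C\,T_0^\theta(\cdot)\le R$ achievable and pins down the stated power of $\|\varphi\|_{L^2}$ in $T_0$; everything else is a textbook contraction argument that can be quoted from Cazenave \cite{1055.35003}, Ch.~4.
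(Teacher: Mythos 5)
Your proposal is correct and takes essentially the same route as the paper, which simply quotes the contraction-mapping argument from Cazenave to get the bound on a time scale $T_0\sim\|\varphi\|_{L^2(\rr)}^{-4p/(4-p)}$ and then reiterates it; your exponent bookkeeping ($\theta=(4-p)/4$, pair $(q_0,r_0)=(4(p+2)/p,\,p+2)$) and the bootstrap to arbitrary admissible pairs are exactly what lies behind that citation. The only detail to add is that for an interval $I$ not starting at $t=0$ you should write the Duhamel formula from the left endpoint of $I$ and use conservation of the $L^2(\rr)$-norm, which is precisely how the paper passes to the intervals $[(k-1)T_0,kT_0]$.
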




\begin{proof}[Proof of Lemma \ref{stab.1}]Let us fix an admissible pair $(q,r)$.
The fixed point argument used in the proof of Theorem \ref{exis.uniq} (see (\cite{MR559676}, Th.~5.5.1, p.~15)
gives us the existence of  a time
$T_0$,
$$T_0=c(p) \|\varphi\|_{L^2(\rr)}^{-\frac{4p}{4-p}},$$
such that
$$\|u\|_{L^q(0,T_0;\,L^r(\rr))}\leq C(p,q)\|\varphi\|_{L^2(\rr)}.$$
The same argument applied to the interval $[(k-1)T_0,kT_0]$, $k\geq 1$, and the  conservation of the $L^2(\rr)$-norm
of the solution $u$ of the NSE gives us that
$$\|u\|_{L^q((k-1)T_0,\,kT_0;\,L^r(\rr))}\leq C(p,q)\|u((k-1)T_0)\|_{L^2(\rr)}=C(p,q)\|\varphi\|_{L^2(\rr)}.$$
This proves \eqref{est.nse} and finishes the proof of Lemma \ref{stab.1}.
\end{proof}

\subsection{Approximation of the NSE by dispersive numerical schemes}

In this section we consider a numerical scheme for the NSE based on approximations of the LSE
that has uniform dispersive properties of
Strichartz type. Examples of such schemes have been given in Section \ref{dis.lse} and Section \ref{two.grid.section}.

To be more precise, we  deal with the following numerical schemes:
\begin{itemize}
\item Consider
\begin{equation}\label{eq.dis.non}
    \left\{
\begin{array}{cc}
 \displaystyle i u^h_t + A_h u^h=f(u^h), & t>0,  \\[8pt]
  \displaystyle u^h(0) =\varphi^h, &
\end{array}
    \right.
\end{equation}
where $A_h$ is an approximation of $\Delta$ such
that  $\exp(it A_h)$  has uniform dispersive properties of
Strichartz type. We also assume that 
 $A_h$ satisfies  ${\rm{Re}}(iA_h\varphi, \varphi)_h\leq 0$, $\Re$ being the real part,  and has a symbol $a_h(\xi)$ which verifies
\begin{equation}\label{err.symbol.1}
    |a_h(\xi)-\xi^2|\leq \sum _{k\in F}a(k,h)|\xi|^k, \, \xi\in \left [-\frac\pi h,\frac\pi h\right ].
\end{equation}

\item The two-grid scheme. The two-grid scheme can be adapted to the nonlinear frame as follows. Consider the equation
\begin{equation}\label{eq.two.grid}
    \left\{
\begin{array}{cc}
 \displaystyle i u^{0,h}_t + \Delta_h u^{0,h}=\bpi^{4h}_{h} f((\bpi^{4h}_{h})^* u^{0,h}), & t>0,  \\[8pt]
  \displaystyle u^{0,h}(0) =\bpi ^{4h}_h\varphi^h, &
\end{array}
    \right.
\end{equation}
where $(\bpi^{4h}_{h})^*:l^2(h\zz)\rightarrow l^2(4h\zz)$ is the adjoint of $\bpi^{4h}_{h}:l^2(4h\zz)\rightarrow l^2(h\zz)$ and $\varphi^h$ is an approximation of $\varphi$.

By \cite{liv2}, Theorem 4.1, {\it for any $p\in (0,4)$ there exists of a positive time $T_0=T_0(\|\varphi\|_{L^2(\rr)})$ and a unique solution $u^{h,0}\in C(0,T_0;\,l^2(h\zz^d))\cap
L^q(0,T_0;\,l^{p+2}(h\zz^d))$, $q=4(p+2)/p$, of the system \eqref{eq.two.grid}. Moreover, $u^{h,0}$ satisfies
\begin{equation}\label{mult140.0}
    \|u^h\|_{L^\infty(\rr,\,l^2(h\zz^d))}\leq \|\bpi ^{4h}_h \varphi
    ^h\|_{l^2(h\zz^d)}
\end{equation}
and
\begin{equation}\label{mult141.0}
    \|u^h\|_{L^{q}(0,T_0;\,l^{p+2}(h\zz^d))}\leq c(T_0) \|\bpi ^{4h}_h  \varphi
    ^h\|_{l^2(h\zz^d)},
\end{equation}
	where the above constant is independent of $h$.}

With $T_0$ obtained above, for any $k\geq 1$ we consider $u^{k,h}:[kT_0,(k+1)T_0]\rightarrow \cn$ the solution of the following system
\begin{equation}\label{eq.two.grid.2}
    \left\{
\begin{array}{cc}
 \displaystyle i u^{k,h}_t + \Delta_h u^{k,h}=\bpi^{4h}_{h} f((\bpi^{4h}_{h})^* u^{k,h}), & t\in [kT_0,(k+1)T_0],  \\[8pt]
  \displaystyle u^{k,h}(kT_0) =\bpi ^{4h}_h u^{k-1,h}(kT_0). &
\end{array}
    \right.
\end{equation}
Once, $u^{k,h}$ are computed the approximation $u^h$ of NSE is defined as
\begin{equation}\label{eq.two.grid.3}
u^h(t)=u^{k,h}(t), \, t\in [kT_0,(k+1)T_0).
\end{equation}

We point out that systems \eqref{eq.two.grid} and \eqref{eq.two.grid.2} have always a global  solution in the class $C(\rr,\,l^2(h\zz))$ (use the embedding 
$ l^2(h\zz)\subset l^\infty(h\zz)$, a classical fix point argument and the conservation of the $l^2(h\zz)$-norm). However, estimates in the $L^q(0,T;\,l^r(h\zz))$-norm, uniformly with respect to the mesh-size parameter $h >0$,  cannot be proved without using Strichartz estimates given by Theorem \ref{multstr}. Thus we need to take initial data obtained through  a two-grid process. Since the two-grid class of functions is not invariant under the  flow of  system \eqref{eq.two.grid} we need to update the solution at some  time-step $T_0$ which depends only on $L^2(\rr)$-norm of the initial data $\varphi$.
\end{itemize}

\medskip

The following theorems give us the existence and uniqueness of solutions for the above systems as well as  quantitative dispersive
estimates of solutions $u^h$, similar to those obtained in Lemma \ref{stab.1} for the continuous NSE, uniformly on the mesh-size parameter $h>0$.

\begin{teo}\label{schnon}
Let $p\in (0,4)$, $f(u)=|u|^pu$ and $A_h$ be such that ${\rm{Re}}( i A_h\varphi,\varphi)_h\leq 0$ and \eqref{linfty.dis-h} holds.
 Then for
every $\varphi ^h \in l^2(h\zz)$, there exists a unique global
solution $u^h\in C(\rr,\,l^2(h\zz))$
of  (\ref{eq.dis.non}) which  satisfies
\begin{equation}\label{sch140}
    \|u^h\|_{L^\infty(\rr,\,l^2(h\zz))}\leq \|\varphi ^h \|_{l^2(h\zz)}.
\end{equation}
Moreover, there exist $c(p)>0$ and $C(p,q)>0$ such that  for any finite interval $I$ with $|I|\leq
T_0=c(p)\|\varphi^h\|_{l^2(h\zz)}^{-4p/(4-p)}$ 
\begin{equation}\label{sch141}
    \|u^h\|_{L^{q}(I,\,l^{r}(h\zz))}\leq C(p,q)
\|\varphi^h\|_{l^2(h\zz)},
\end{equation}
where $(q,r)$ is a $1/2$-admissible pair and the above constant is independent of $h$.
\end{teo}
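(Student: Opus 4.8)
The plan is to run the classical $L^2$-subcritical Strichartz fixed-point argument of Tsutsumi and Cazenave at the discrete level, being careful that every constant can be chosen independently of $h$. Writing \eqref{eq.dis.non} in Duhamel form,
$$u^h(t)=\exp(itA_h)\varphi^h-i\int_0^t\exp(i(t-\sg)A_h)\,f(u^h(\sg))\,d\sg,$$
the dissipativity hypothesis ${\rm Re}(iA_h\varphi,\varphi)_h\le 0$ makes $iA_h$ the generator of an $l^2(h\zz)$-contraction semigroup, so \eqref{l2-stable-h} holds; combined with the uniform dispersive bound \eqref{linfty.dis-h} this places us exactly in the hypotheses of Proposition \ref{strfil} with $H=l^2(h\zz)$, $X=h\zz$ and $\alpha=1/2$. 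Hence $\exp(itA_h)$ obeys the homogeneous and inhomogeneous Strichartz estimates \eqref{dis.stric}--\eqref{dis.stric.inhomg} for every $1/2$-admissible pair, with constants that do not depend on $h$. This is the only step where the dispersive character of the scheme is used, and the only place where $h$-uniformity could be lost.

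Next I would set up the contraction. Fix the admissible pair $(q_0,p+2)$ with $q_0=4(p+2)/p$, and for a time $T>0$ to be chosen work in the ball
$$X_T=\big\{u\in C([0,T];l^2(h\zz))\cap L^{q_0}(0,T;l^{p+2}(h\zz)):\ \|u\|_{L^\infty(0,T;l^2(h\zz))}+\|u\|_{L^{q_0}(0,T;l^{p+2}(h\zz))}\le R\big\},$$
with $R=2C\|\varphi^h\|_{l^2(h\zz)}$, $C$ being the Strichartz constant above. For the nonlinear term, Hölder's inequality on $h\zz$ gives $\|f(u)\|_{l^{(p+2)'}(h\zz)}=\|u\|_{l^{p+2}(h\zz)}^{p+1}$, and since $p<4$ one gains a positive power of the interval by Hölder in time,
$$\|f(u)\|_{L^{q_0'}(0,T;l^{(p+2)'}(h\zz))}\le T^{(4-p)/4}\|u\|_{L^{q_0}(0,T;l^{p+2}(h\zz))}^{p+1}.$$
Applying the uniform Strichartz estimates to the Duhamel map $\Phi$ then yields $\|\Phi(u)\|_{X_T}\le C\|\varphi^h\|_{l^2(h\zz)}+CT^{(4-p)/4}R^{p+1}$ and $\|\Phi(u)-\Phi(v)\|_{X_T}\le CT^{(4-p)/4}(\|u\|_{X_T}^p+\|v\|_{X_T}^p)\|u-v\|_{X_T}$. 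Choosing $T=T_0$ with $CT_0^{(4-p)/4}R^p\le 1/2$, that is $T_0=c(p)\|\varphi^h\|_{l^2(h\zz)}^{-4p/(4-p)}$ with $c(p)$ independent of $h$, makes $\Phi$ a contraction of $X_{T_0}$ into itself; its fixed point is a solution of \eqref{eq.dis.non} on $[0,T_0]$ obeying the $(q_0,p+2)$-case of \eqref{sch141}. Re-inserting this solution into the Duhamel formula and applying the Strichartz estimates once more, now with an arbitrary admissible pair $(q,r)$ on the left and the bound above for $\|f(u^h)\|_{L^{q_0'}(I;l^{(p+2)'}(h\zz))}$ on the right, gives \eqref{sch141} for every admissible pair.

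For \eqref{sch140} and for globalization I would use the discrete energy identity
$$\frac{d}{dt}\|u^h(t)\|_{l^2(h\zz)}^2=2(iA_hu^h,u^h)_h-2\,{\rm Re}\big(i\,h\sum_j|u^h_j|^{p+2}\big),$$
whose first term is $\le 0$ by hypothesis and whose second vanishes by the gauge invariance of $f$; hence $\|u^h(t)\|_{l^2(h\zz)}$ is non-increasing, which is \eqref{sch140} on the interval of existence. Since for each fixed $h$ the right-hand side of \eqref{eq.dis.non} maps $l^2(h\zz)$ into itself and any $C([0,T];l^2(h\zz))$-solution automatically lies in $L^{q_0}_{\mathrm{loc}}(l^{p+2}(h\zz))$, the fixed-point solution is the unique $C(\rr;l^2(h\zz))$ solution (local uniqueness from the contraction, then a continuation argument; the same applies for $t<0$, with $h$-dependent bounds for the non-conservative schemes). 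Because $\|u^h(t)\|_{l^2(h\zz)}$ does not increase, the local time $T_0$ is the same at every step, so one extends the solution globally and obtains \eqref{sch141} on every interval $I$ with $|I|\le T_0$ by repeating the contraction on $[kT_0,(k+1)T_0]$ with datum $u^h(kT_0)$, exactly as in Lemma \ref{stab.1}.

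The only genuine point is the uniformity in $h$. Everything downstream of Proposition \ref{strfil} is the textbook subcritical contraction, whose constants (the Hölder constants in the nonlinear estimate, the power $(4-p)/4$ of $|I|$) are purely combinatorial and $h$-free; so the whole statement rests on \eqref{linfty.dis-h} holding with a constant independent of $h$, precisely the dispersive property the schemes of Sections \ref{dis.lse}--\ref{two.grid.section} were designed to have and that the plain three-point conservative scheme lacks. Without it one would only have the dispersive inequality with the $h$-dependent constant $h^{-1/2}$ coming from $l^1(h\zz)\hookrightarrow l^2(h\zz)$, the Strichartz constants would blow up, and both $T_0$ and the constant in \eqref{sch141} would degenerate as $h\to 0$.
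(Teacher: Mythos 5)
Your proof is correct and follows essentially the same route as the paper: uniform Strichartz estimates via Proposition \ref{strfil} (using that ${\rm Re}(iA_h\varphi,\varphi)_h\le 0$ gives the $l^2(h\zz)$-stability), a classical subcritical fixed-point argument on a time interval of length $T_0\sim\|\varphi^h\|_{l^2(h\zz)}^{-4p/(4-p)}$, the discrete energy identity for \eqref{sch140} and globalization, and iteration on intervals of length $T_0$ as in Lemma \ref{stab.1} to get \eqref{sch141}. The only difference is that you spell out the contraction details that the paper delegates to \cite{liv3} and \cite{liv-siam}.
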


\begin{proof}
Condition ${\rm{Re}}( i A_h\varphi,\varphi)_h\leq 0$ implies the $l^2(h\zz)$ stability property \eqref{l2-stable-h}.
Then local existence is obtained by using Strichartz estimates given by Proposition \ref{strfil} applied to the
operator $\exp(itA_h)$ and a classical fix point argument in a suitable Banach space
(see \cite{liv3} and \cite{liv-siam} for more details). The global existence of solutions and
estimate \eqref{sch140} are guaranteed by the property
${\rm{Re}}(i A_h\varphi,\varphi)_h\leq 0$, and that ${\rm{Re}}(i f(u_h),u_h)_h=0$ and the energy identity:
\begin{equation}\label{discrete.energy}
\frac {d}{dt}\|u^h(t)\|_{l^2(h\zz)}^2=2{\rm{Re}} (iA_hu^h,u^h)_h+2{\rm{Re}}(if(u^h),u^h)_h\leq 0.
\end{equation}
Once the global existence is proved, estimate \eqref{sch141} is obtained  in a similar manner as Lemma \ref{stab.1} and we will omit its proof.
\end{proof}

\begin{teo}\label{multschnon}
Let  $p\in (0,4)$ and $q=4(p+2)/p$. Then for all $h>0$ and for
every $\varphi ^h \in l^2(4h\zz)$, there exists a unique global
solution $ u^h\in C(\rr,\,l^2(h\zz))\cap
L^q_{loc}(\rr,\,l^{p+2}(h\zz^d)) $ of  \eqref{eq.two.grid}-\eqref{eq.two.grid.3}
which satisfies 
\begin{equation}\label{mult140}
    \|u^h\|_{L^\infty(\rr,\,l^2(h\zz))}\leq \|\bpi ^{4h}_h \varphi
    ^h\|_{l^2(h\zz)}.
\end{equation}
Moreover, there exist $c(p)>0$ and $C(p,q)>0$  such that  for any finite interval $I$ with $|I|\leq
T_0=c(p)\|\varphi^h\|_{l^2(h\zz)}^{-4p/(4-p)}$
\begin{equation}\label{mult141}
    \|u^h\|_{L^{q}(I,\,l^{p+2}(h\zz))}\leq C(p,q) \|\bpi ^{4h}_h  \varphi
    ^h\|_{l^2(h\zz)},
\end{equation}
where $(q,r)$ is a $1/2$-admissible pair and the above constant is independent of $h$.
\end{teo}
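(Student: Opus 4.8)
The plan is to mimic, at the level of the two-grid scheme, the fixed point argument used for the continuous NSE in Theorem \ref{exis.uniq} and Lemma \ref{stab.1}, replacing the continuous Strichartz estimates by the uniform discrete ones of Theorem \ref{multstr}. First I would set up the local problem on a generic interval $[a,a+T_0]$ with datum in the two-grid class, namely for a function of the form $\bpi^{4h}_h \psi$ with $\psi \in l^2(4h\zz)$. Writing the Duhamel formula for \eqref{eq.two.grid},
$$u^{0,h}(t)=e^{it\Delta_h}\bpi^{4h}_h\varphi^h - i\int_0^t e^{i(t-s)\Delta_h}\bpi^{4h}_h f((\bpi^{4h}_h)^* u^{0,h}(s))\,ds,$$
the key point is that both terms on the right lie again in the two-grid class, so the map is well defined on the space $X_T = C([0,T];l^2(h\zz))\cap L^q([0,T];l^{p+2}(h\zz))$ intersected with the two-grid subspace, with $q=4(p+2)/p$. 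Here $(q,p+2)$ is the $1/2$-admissible pair attached to the exponent $p$ (one checks $1/q=\tfrac12(\tfrac12-\tfrac1{p+2})$), and the conjugate data $f((\bpi^{4h}_h)^*u)$ is handled by the inhomogeneous estimate \eqref{mest4}.

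Next I would run the contraction estimate. Applying \eqref{mest2} to the linear term and \eqref{mest4} to the Duhamel term, together with the pointwise bound $|f(u)-f(v)|\lesssim (|u|^p+|v|^p)|u-v|$ and Hölder in time on the small interval, one gets
$$\|u^{0,h}\|_{L^q(0,T;l^{p+2}(h\zz))}\leq C(q)\|\bpi^{4h}_h\varphi^h\|_{l^2(h\zz)} + C(q)\,T^{\theta}\,\|(\bpi^{4h}_h)^* u^{0,h}\|^{p+1}_{L^q(0,T;l^{p+2}(4h\zz))}$$
for a suitable $\theta=\theta(p)>0$, and a matching difference estimate for $u-v$; the operator $(\bpi^{4h}_h)^*$ is bounded from $l^{p+2}(h\zz)$ to $l^{p+2}(4h\zz)$ uniformly in $h$ (this is the discrete counterpart of the interpolation bounds already used in Theorem \ref{est.two.grid}), so the nonlinear term is controlled by $\|u\|^{p+1}_{L^q(0,T;l^{p+2}(h\zz))}$. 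Choosing $T_0=c(p)\|\bpi^{4h}_h\varphi^h\|_{l^2(h\zz)}^{-4p/(4-p)}$, which since $\|\bpi^{4h}_h\varphi^h\|_{l^2(h\zz)}\lesssim\|\varphi^h\|_{l^2(4h\zz)}$ is of the asserted form up to constants, makes the map a strict contraction on a ball, giving local existence, uniqueness, and \eqref{mult141} on $[0,T_0]$. Global existence in $C(\rr,l^2(h\zz))$ and the bound \eqref{mult140} follow from the energy identity: since $\Delta_h$ is skew-adjoint on $l^2(h\zz)$ and $\mathrm{Re}\,(i\,\bpi^{4h}_h f((\bpi^{4h}_h)^* u^{0,h}),u^{0,h})_h = \mathrm{Re}\,(i f((\bpi^{4h}_h)^* u^{0,h}),(\bpi^{4h}_h)^* u^{0,h})_{4h}=0$, the $l^2(h\zz)$-norm is nonincreasing, so the local solution extends to all times. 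Then iterating the local argument on successive intervals $[kT_0,(k+1)T_0]$ — re-interpolating the datum through $\bpi^{4h}_h$ at each step as in \eqref{eq.two.grid.2}–\eqref{eq.two.grid.3} and using that the $l^2(h\zz)$-norm of $\bpi^{4h}_h u^{k-1,h}(kT_0)$ is again controlled by $\|\bpi^{4h}_h\varphi^h\|_{l^2(h\zz)}$ — yields \eqref{mult141} on any finite interval, with constants independent of $h$.

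The main obstacle, and the reason the two-grid bookkeeping is delicate, is that the two-grid subspace is \emph{not} invariant under the nonlinear flow: the nonlinearity $\bpi^{4h}_h f((\bpi^{4h}_h)^*u)$ produces again a two-grid function, but only because of the explicit projection-extension sandwiching, and one must check that all the mapping properties ($\bpi^{4h}_h:l^2(4h\zz)\to l^2(h\zz)$, $(\bpi^{4h}_h)^*:l^r(h\zz)\to l^r(4h\zz)$, and the $l^2$ adjoint identity) hold uniformly in $h$; these are exactly the estimates behind \eqref{est.interpolatori} and the Fourier-multiplier bounds for $m(h\xi)$ used in Theorem \ref{est.two.grid}. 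Once these are in hand, the time-stepping at scale $T_0$ — forced precisely because invariance fails — is routine, the constants remaining uniform in $h$ because $T_0$ depends only on the $L^2$-size of the data. I would omit the routine fixed point computation itself, referring to \cite{liv2}, Theorem 4.1, and to the proof of Theorem \ref{schnon} above, exactly as the statement \eqref{mult140.0}–\eqref{mult141.0} already anticipates.
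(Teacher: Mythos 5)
Your proposal is correct and follows essentially the same route as the paper: local existence on an interval of length $T_0$ determined by the $l^2$-size of the data via a fixed point argument based on the uniform two-grid Strichartz estimates \eqref{mest2}--\eqref{mest4} of Theorem \ref{multstr}, the bound \eqref{mult140} from the fact that ${\rm Re}\,(i\,\bpi^{4h}_h f((\bpi^{4h}_h)^*u),u)_h=0$ together with the non-increase of the $l^2(h\zz)$-norm at each two-grid update, and then iteration on the intervals $[kT_0,(k+1)T_0]$. One small correction: the contraction should be run in the plain space $C([0,T];\,l^2(h\zz))\cap L^q(0,T;\,l^{p+2}(h\zz))$ rather than in its intersection with the two-grid class, since the propagator $e^{it\Delta_h}$ does not preserve that class; what Theorem \ref{multstr} requires is only that the initial datum and the integrand in the Duhamel term be of the form $\bpi^{4h}_h(\cdot)$, which holds by the structure of \eqref{eq.two.grid}, as you yourself observe when noting the non-invariance that forces the time-stepping.
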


\begin{proof}
The  existence in the interval $(0,T_0)$, $T_0=T_0(\|\varphi^h\|_{l^2(h\zz)})$
for system \eqref{eq.dis.non} is obtained by using the Strichartz estimates given by Theorem
\ref{multstr} 
  and a classical fix point argument in a suitable Banach space
(see \cite{liv3} and \cite{liv-siam} for more details).

For any $k\geq 1$ the same arguments guarantee the local existence for
systems \eqref{eq.two.grid.2}. To prove that each system has solutions on an interval of length $T_0$ we have to prove a priori that
 the $l^2(h\zz)$-norm of $u^h$ does not increase.
The particular approximation we have introduced of the nonlinear term in \eqref{eq.two.grid}-\eqref{eq.two.grid.2} gives us (after multiplying these
equations by $u^{k,h}$ and taking the $l^2(h\zz)$-norm) that for any $t\in [kT_0,(k+1)T_0]$
$$\|u^{k,h}(t)\|_{l^2(h\zz)}=\|u^{k,h}(kT_0)\|_{l^2(h\zz)}\leq \|u^{k-1,h}(kT_0)\|_{l^2(h\zz)}$$
and then
$$\|u^{k,h}(t)\|_{l^2(h\zz)}\leq \|u^{0,h}(0)\|_{l^2(h\zz)}=\|\bpi ^{4h}_h\varphi^h\|_{l^2(h\zz)}.$$

This proves \eqref{mult140} and the fact that for any $k\geq 1$ system \eqref{eq.two.grid.2} has a solution on the whole
interval $[kT_0,(k+1)T_0]$. Estimate \eqref{mult141} is obtained locally on each interval $[kT_0,(k+1)T_0]$
together with the local existence result.
\end{proof}

Let us consider  $u^h$  the solution of the semidiscrete problem
 \eqref{eq.dis.non} and $u$ of the continuous one \eqref{scha869}. In the following theorem we
evaluate the difference between $u^h$ and $\th u$.

\begin{teo}\label{main}Let $p\in (0,4)$, $s\in (0,1/2)$, $f(u)=|u|^pu$ and $A_h$ be
as in Theorem \ref{schnon} satisfying \eqref{err.symbol.1}.  For any $\varphi\in  H^s(\rr)$, we consider
 $u^h$ and $u\in L^\infty(\rr,\,H^s(\rr))\cap
L^{q_0}_{loc}(\rr,B^{s}_{p+2,2}(\rr))$, $q_0=4(p+2)/p$  solutions of problems \eqref{eq.dis.non} and \eqref{scha869}, respectively.
 Then  for any $T>0$ there exists a positive constant $C(T,\|\varphi\|_{L^2(\rr)})$
such that
\begin{align}
 \label{est.main}   \|u^h &-\th u\|_{L^{q_0}(0,T;\,l^{p+2}(h\zz))}+\|u^h-\th u\|_{L^\infty (0,T;\,l^2(h\zz))} \\
 \nonumber&\leq C(T,\|\varphi\|_{L^2(\rr)},p)
\Big[\eps(s,h)\|u\|_{L^\infty(0,T;\,H^s(\rr))}+ \Big(h^{s}+\eps(s,h)\Big)
\|u\|_{L^{q_0}(0,T;\,B^{s}_{p+2,2}(\rr))}^{p+1}\Big]
\end{align}
holds for all $h>0$.
\end{teo}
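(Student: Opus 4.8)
The plan is to compare $u^h$ with $\th u$ via Duhamel's formula and the linear error estimates already established, iterating on short time intervals whose length is controlled by the $L^2$-norm of the data (which is conserved). First I would write both solutions through their respective Duhamel representations:
\begin{align*}
u^h(t) &= \exp(itA_h)\varphi^h - i\int_0^t \exp(i(t-\sg)A_h) f(u^h(\sg))\,d\sg,\\
\th u(t) &= \th\exp(it\partial_x^2)\varphi - i\int_0^t \th\exp(i(t-\sg)\partial_x^2) f(u(\sg))\,d\sg.
\end{align*}
Subtracting, the difference splits into a linear part, $\exp(itA_h)\th\varphi - \th\exp(it\partial_x^2)\varphi$ (taking $\varphi^h=\th\varphi$), and a nonlinear part. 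The linear part is estimated directly by Theorem \ref{est.lin}: in the $L^{q_0}(I;l^{p+2}(h\zz))$ and $L^\infty(I;l^2(h\zz))$ norms it is bounded by $C\max\{1,|I|\}\,\eps(s,h)\|\varphi\|_{H^s(\rr)}$, which we further bound using $\|\varphi\|_{H^s(\rr)}\le \|u\|_{L^\infty(0,T;H^s(\rr))}$.

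For the nonlinear part I would insert the intermediate term $\th\exp(i(t-\sg)\partial_x^2)f(u^h(\sg))$ and write the Duhamel difference as
$$\int_0^t\!\Big(\exp(i(t-\sg)A_h)\th - \th\exp(i(t-\sg)\partial_x^2)\Big) f(u^h(\sg))\,d\sg \;+\; \int_0^t \th\exp(i(t-\sg)\partial_x^2)\big(f(u^h(\sg)) - f(\th u(\sg))\big)\,d\sg.$$
Wait — $f$ acts on discrete functions in the first integral and on $\th u$ in the second, so more precisely one compares $f(u^h)$ with $\th f(u)$; the cleanest route is to write the intermediate term as $\th\int_0^t\exp(i(t-\sg)\partial_x^2)f(u(\sg))d\sg$ directly (which is exactly $\th$ applied to the continuous Duhamel term) and then the nonlinear discrepancy becomes
$$\underbrace{\int_0^t\!\Big(\exp(i(t-\sg)A_h)\th - \th\exp(i(t-\sg)\partial_x^2)\Big) f(u(\sg))\,d\sg}_{\text{(I)}} \;+\; \underbrace{\int_0^t\exp(i(t-\sg)A_h)\Big(f(u^h(\sg)) - \th f(u(\sg))\Big)d\sg}_{\text{(II)}}.$$
Term (I) is controlled by the inhomogeneous linear estimate \eqref{est.inh} of Theorem \ref{est.lin}, giving a bound $C\max\{1,|I|\}\,\eps(s,h)\|f(u)\|_{L^{q_0'}(I;B^s_{(p+2)',2}(\rr))}$, and by \eqref{normebs} this is $\lesssim \eps(s,h)\,|I|^{\theta}\|u\|_{L^{q_0}(I;B^s_{p+2,2}(\rr))}^{p+1}$. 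Term (II) is handled by the uniform discrete Strichartz estimates for $\exp(itA_h)$ (Proposition \ref{strfil} applied to this semigroup), reducing it to $\|f(u^h)-\th f(u)\|_{L^{q_0'}(I;l^{(p+2)'}(h\zz))}$; splitting $f(u^h)-\th f(u) = \big(f(u^h)-f(\th u)\big) + \big(f(\th u)-\th f(u)\big)$, the first piece is estimated by the local Lipschitz bound $|f(a)-f(b)|\lesssim (|a|^p+|b|^p)|a-b|$ together with Hölder (producing a factor $\|u^h-\th u\|_{L^{q_0}(I;l^{p+2}(h\zz))}$ times a power of the Strichartz norms of $u^h$ and $\th u$, both uniformly bounded by Theorem \ref{schnon} and by \eqref{norm.equiv}), and the second piece, $f(\th u)-\th f(u)$, is a commutator-type consistency error between the sampling operator $\th$ and the nonlinearity that one estimates by $\lesssim (h^s+\eps(s,h))\|u\|_{L^{q_0}(I;B^s_{p+2,2}(\rr))}^{p+1}$ using the mapping properties of $\th$ on Besov spaces (essentially that $\|\th g - g\|$-type errors on functions with suitable regularity are $O(h^s)$, combined with \eqref{normebs}).

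Collecting the pieces on an interval $I$ of length $\le T_0 = c(p)\|\varphi\|_{L^2(\rr)}^{-4p/(4-p)}$, and choosing $|I|$ small enough that the factor multiplying $\|u^h-\th u\|_{L^{q_0}(I;l^{p+2}(h\zz))}$ coming from term (II) is $\le 1/2$, we absorb that term on the left and obtain
$$\|u^h-\th u\|_{L^{q_0}(I;l^{p+2}(h\zz))} + \|u^h-\th u\|_{L^\infty(I;l^2(h\zz))} \le C\Big[\eps(s,h)\|u\|_{L^\infty(I;H^s)} + (h^s+\eps(s,h))\|u\|_{L^{q_0}(I;B^s_{p+2,2})}^{p+1}\Big] + C\|(u^h-\th u)(\text{left endpoint})\|_{l^2(h\zz)}.$$
Then I would iterate over the $\lceil T/T_0\rceil$ consecutive intervals of length $T_0$: since $\|u^h(t)\|_{l^2}$ and $\|u(t)\|_{L^2}$ stay bounded by $\|\varphi\|_{L^2}$, the interval length $T_0$ is uniform across the iteration, and the error at the end of each step feeds into the next as initial data, producing a constant that grows like $C^{T/T_0}$ but is finite and depends only on $T$, $\|\varphi\|_{L^2(\rr)}$, and $p$. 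The main obstacle I anticipate is the commutator term $f(\th u) - \th f(u)$: controlling it requires a genuine harmonic-analysis estimate on how the Fourier-truncation/sampling operator $\th$ interacts with the pointwise nonlinearity on Besov spaces $B^s_{p+2,2}$ with $s<1/2$ (in particular one cannot just differentiate), and this is where the $h^s$ term — distinct from the $\eps(s,h)$ coming from the scheme's consistency — enters the final bound; the bookkeeping of which norm ($H^s$ versus $B^s_{p+2,2}$, continuous versus discrete) appears where is also delicate but routine given Lemma \ref{app18}, \eqref{norm.equiv}, and the Besov embeddings quoted after Theorem \ref{exis.uniq}.
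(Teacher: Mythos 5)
Your plan follows essentially the same route as the paper's proof: the same Duhamel splitting into the linear error plus the two terms (I) and (II) (the paper's $I_3^h$ and $I_2^h$), with (I) handled by the inhomogeneous estimate \eqref{est.inh} together with \eqref{normebs}, (II) by uniform discrete Strichartz estimates plus the Lipschitz bound and the commutator term $f(\th u)-\th f(u)$, followed by absorption on a short interval of length determined by $\|\varphi\|_{L^2(\rr)}$ and iteration using $L^2$-conservation. The commutator estimate you single out as the main obstacle is exactly the paper's Lemma \ref{lemma.non} (proved in Section \ref{putos.lemas} via Marcinkiewicz multipliers and Plancherel--P\'olya), so your proposal is correct modulo citing that technical lemma.
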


In the case of the two-grid method, the solution $u^h$ of system \eqref{eq.two.grid} approximates the solution $u$ of the
NSE \eqref{scha869} and the error committed is given by the following theorem.

\begin{teo}\label{main.two.grid}Let $p\in (0,4)$, $s\in (0,1/2)$, $f(u)=|u|^pu$.  For any $\varphi\in  H^s(\rr)$, we consider
 $u^h$ and $u\in L^\infty(\rr,\,H^s(\rr))\cap
L^{q_0}_{loc}(\rr,B^{s}_{p+2,2}(\rr))$, $q_0=4(p+2)/p$, solutions of problems \eqref{eq.two.grid}-\eqref{eq.two.grid.3}
and \eqref{scha869}, respectively.
 Then  for any $T>0$ there exists a positive constant $C(T,\|\varphi\|_{L^2(\rr)})$
such that
\begin{align}
 \label{est.main.grid}   \|u^h &-\th u\|_{L^{q_0}(0,T;\,l^{p+2}(h\zz))}+\|u^h-\th u\|_{L^\infty (0,T;\,l^2(h\zz))}\\
 \nonumber&\leq C(T,\|\varphi\|_{L^2(\rr)},p)
\Big[h^{s/2}\|u\|_{L^\infty(0,T;H^s(\rr))}+ \Big(h^{s}+h^{s/2}\Big)
\|u\|_{L^{q_0}(0,T;\,B^{s}_{p+2,2}(\rr))}^{p+1}\Big]
\end{align}
holds for all $h>0$.
\end{teo}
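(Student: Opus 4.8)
The plan is to mimic the proof of Theorem \ref{main} (the viscous-scheme case), replacing the ingredient ``linear error for the scheme $\exp(itA_h)$'' by the corresponding two-grid linear estimate of Theorem \ref{est.two.grid}. Write $v^h = \th u$ and compare it with $u^h$ via Duhamel's formula. On each of the intervals $I_k=[kT_0,(k+1)T_0]$ provided by Theorem \ref{multschnon}, with $T_0=T_0(\|\varphi\|_{L^2})$ depending only on the (conserved) $L^2$-norm, the continuous solution obeys
\[
u(t)=\exp(i(t-kT_0)\partial_x^2)u(kT_0)-i\int_{kT_0}^t \exp(i(t-\sigma)\partial_x^2)f(u(\sigma))\,d\sigma,
\]
while $u^h$ on $I_k$ solves \eqref{eq.two.grid.2} with a two-grid–filtered initial datum. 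The first step is to insert $\th$ and split
\[
u^h-\th u = \Big[\exp(it\Delta_h)\bpi^{4h}_h\tph u(kT_0)-\th\exp(it\partial_x^2)u(kT_0)\Big]
+\Big[\text{Duhamel difference}\Big]+\Big[\text{initialisation error }u^h(kT_0)-\bpi^{4h}_h\tph u(kT_0)\Big].
\]
For the first bracket apply the homogeneous estimate \eqref{tg.1}; for the Duhamel difference apply the inhomogeneous estimate \eqref{tg.inhom} to the linear part and, for the genuinely nonlinear contribution, use the uniform Strichartz estimates of Theorem \ref{multstr} exactly as in Theorem \ref{main}.

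The nonlinear term is handled as in the continuous theory: by \eqref{normebs} one has $f(u)\in L^{q_0'}(I_k;B^s_{(p+2)',2}(\rr))$ with norm controlled by $\|u\|_{L^{q_0}(I_k;B^s_{p+2,2})}^{p+1}$, so \eqref{tg.inhom} produces the factor $(h^{\min\{s/2,2\}}+h^{\min\{s,1\}})=h^{s/2}+h^{s}$ (here $0<s<1/2$) multiplying $\|u\|_{L^{q_0}(0,T;B^s_{p+2,2})}^{p+1}$, and the difference of the two nonlinearities $f(u^h)$ versus $f(\th u)$ is estimated by a Lipschitz bound $|f(a)-f(b)|\lesssim (|a|^p+|b|^p)|a-b|$ combined with Hölder and the a priori bounds \eqref{mult140}–\eqref{mult141} and \eqref{est.nse}, yielding a term of the form $C\,\|u^h-\th u\|_{L^{q_0}(I_k;l^{p+2})}$ with a constant that, after choosing $|I_k|=T_0$ small, can be absorbed into the left-hand side. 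This is the standard ``local-in-time contraction'' closing of the estimate; the homogeneous term contributes $h^{s/2}\|u\|_{L^\infty(0,T;H^s)}$ via \eqref{tg.1} (again $0<s<1/2$ so $\min\{s/2,2\}=s/2$ and $\min\{s,1\}=s\ge s/2$, but $\|\varphi\|_{H^s}\le\|\varphi\|_{L^2}$ absorbs the larger power's coefficient — more precisely one writes $h^{s/2}+h^s\lesssim h^{s/2}$).

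The remaining point is to control the \emph{initialisation error} $e_k:=u^h(kT_0)-\bpi^{4h}_h\tph u(kT_0)$ in $l^2(h\zz)$ and propagate it. For $k=0$ we choose $\varphi^h$ so that $e_0=0$ (or $e_0=O(h^{s/2})$). For $k\ge1$, $u^h(kT_0)=\bpi^{4h}_h u^{k-1,h}(kT_0)$ whereas the target at time $kT_0$ is $\bpi^{4h}_h\tph u(kT_0)$; since $\bpi^{4h}_h$ and $\tph$ are bounded on $l^2$ uniformly in $h$, we get $\|e_k\|_{l^2(h\zz)}\lesssim \|u^{k-1,h}(kT_0)-\th u(kT_0)\|_{l^2(h\zz)}+\|(\bpi^{4h}_h\tph-\th)u(kT_0)\|_{l^2(h\zz)}$, and the last term is $O(h^{\min\{s,1\}})=O(h^s)$ by \eqref{est.interpolatori}. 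Hence the errors on successive windows satisfy a recursion $E_{k}\le C\,E_{k-1}+C(h^{s/2}+h^{s})$ where $E_k$ denotes the full $L^{q_0}(I_k;l^{p+2})\cap L^\infty(I_k;l^2)$ norm of $u^h-\th u$; iterating over the $\lceil T/T_0\rceil$ windows (a number depending only on $T$ and $\|\varphi\|_{L^2}$) produces the constant $C(T,\|\varphi\|_{L^2},p)$ and the claimed bound \eqref{est.main.grid}.

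The main obstacle is the same as in Theorem \ref{main}: absorbing the nonlinear difference term into the left-hand side requires the local existence time $T_0$ to be chosen purely in terms of $\|\varphi\|_{L^2(\rr)}$ (so that the number of windows is $h$-independent), and requires the uniform-in-$h$ Strichartz bounds for the two-grid scheme \eqref{mest2}–\eqref{mest4} to control $u^h$ in $L^{q_0}(I_k;l^{p+2})$ independently of $h$; once the a priori bounds \eqref{mult140}–\eqref{mult141} are in hand this is routine, but keeping every constant uniform in $h$ through the interpolation/embedding steps — in particular the discrete Besov bound $\|\bpi^{4h}_h\tph f\|_{B^s_{\tr',2}(h\zz)}\lesssim\|f\|_{B^s_{\tr',2}(\rr)}$ coming from Theorem \ref{graf} applied to the multiplier $m$ — is where the care is needed. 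A secondary subtlety is that the two-grid class is not flow-invariant, so the re-projection at each $kT_0$ injects the extra $O(h^s)$ interpolation error $e_k$ at every window; fortunately this is of the \emph{same} order $h^s$ as (indeed smaller than) the linear error $h^{s/2}$ already present, so it does not degrade the final rate.
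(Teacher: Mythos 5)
Your proposal follows essentially the same route as the paper, which proves Theorem \ref{main.two.grid} exactly by repeating the argument of Theorem \ref{main} on successive windows $(kT_0,(k+1)T_0)$ with $T_0=T_0(\|\varphi\|_{L^2(\rr)})$, replacing the linear error estimates by Theorem \ref{est.two.grid} and the dispersive bounds by the two-grid Strichartz estimates of Theorem \ref{multstr}, and your explicit bookkeeping of the re-initialisation error $\bpi^{4h}_h\tph u(kT_0)$ versus $\th u(kT_0)$ (of order $h^{s}\lesssim h^{s/2}$) is consistent with this. Two small points to tighten, neither affecting the rate: the aside ``$\|\varphi\|_{H^s(\rr)}\le\|\varphi\|_{L^2(\rr)}$'' is backwards (what is actually used is $h^{s}\le h^{s/2}$ for $h\le 1$), and the Lipschitz comparison of nonlinearities must be carried out for the scheme's true nonlinearity $\bpi^{4h}_h f((\bpi^{4h}_h)^* u^h)$ rather than $f(u^h)$, which requires the uniform $l^r(h\zz)$-boundedness of the two-grid operators together with Lemma \ref{lemma.non} for the commutator $f(\th u)-\th f(u)$, exactly as in the proof of Theorem \ref{main}.
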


\begin{remark}
Using classical results on the solutions of the NSE (see for example \cite{MR559676}, Theorem~5.1.1, Ch.~5, p.~147) we can state the above result in a more compact way: For any $T>0$ there exists a positive constant $C(T,\|\varphi\|_{H^s(\rr)})$ such that
\begin{align}
 \label{est.main.grid.100}   \|u^h -\th u\|_{L^{q_0}(0,T;\,l^{p+2}(h\zz))}+\|u^h-\th u\|_{L^\infty (0,T;\,l^2(h\zz))}\leq C(T,\|\varphi\|_{H^s(\rr)}) h^{s/2}
 \end{align}
holds for all $h>0$.

\end{remark}

Theorem \ref{main} shows that if $h^s\leq \eps(s,h)$ then the error committed to
 approximate the nonlinear problem is
the same as for the linear problem with the same initial data.
As we proved in Section  \ref{examples}, for the higher order dissipative scheme $A_h=\Delta_h-i h^{2(m-1)}(-\Delta_h)^m$, $m\geq 2$,
and for the two-grid method, $\eps(s,h)=h^{s/2}\geq h^s$. So these schemes enter in this framework.
 It is also remarkable that the use of
dispersive schemes  allows to prove the convergence  for the NSE and to
obtain the convergence rate for $H^s(\rr)$ initial data with $0<s<1/2$.
 We point out that the
energy method does not provide any error estimate in this case, the minimal smoothing
required for the energy method being $H^{s}(\rr)$, with $s>1/2$ (see Section \ref{nondisipative} for all the details).

\medskip

In the following we prove Theorem \ref{main}, the proof of Theorem \ref{main.two.grid} being similar since the estimates
in any interval $(0,T)$ are obtained reiterating the argument in each interval $(kT_0,(k+1)T_0)$, $k\geq 0$, for some
 $T_0=T_0(\|\varphi\|_{L^2(\rr)})$ in view of the structure of the scheme.

\begin{proof}[Proof of Theorem \ref{main}]
The idea of the proof is that there exists a time $T_1$ depending on the $L^2(\rr)$-norm of the initial data:
 $$T_1\simeq \min\{1,\|\varphi\|_{L^2(\rr)}^{-4p/(4-p)}\},$$
  such that the
error in the approximation of the nonlinear problem
$$\err(t)=u^h(t)-\th u(t),$$  when considered in the ${L^{q_0}(0,T_1;\,l^{p+2}(h\zz))}\cap L^\infty(0,T_1;\,l^2(h\zz))$-norm
 is controlled by the error produced in the linear part
$$\errlin(t)=\exp(itA_h)\th\varphi-\th \exp(it\partial_x^2)\varphi.$$

In the following we denote by  $(q,r)$  one of the admissible pairs $(\infty,2)$ or $(q_0,p+2)$. We now
write the two solutions in the semigroup formulation given by systems \eqref{eq.dis.non} and \eqref{scha869}:
$$u^h(t)=\exp(itA_h)\th \varphi+i\int_0^t\exp(i(t-s)A_h)f(u^h(s))ds,$$
and, respectively,
$$\th u(t)=\th \exp(it\partial_x^2)\varphi+i\int _0^t \th \exp(i(t-s)\partial_x^2)f(u(s))ds.$$
Thus
\begin{equation}\label{error.1}
    \|\err\|_{L^q(0,T; \,l^r(h\zz))}\leq \|\errlin\|_{L^q(0,T; \,l^r(h\zz))}+\|\errnon\|_{L^q(0,T; \,l^r(h\zz))}
\end{equation}
where, by definition, 
$$\errnon(t)=\int_0^t \exp(i(t-s)A_h)f(u^h(s))ds-\int _0^t \th \exp(i(t-s)\partial_x^2)f(u(s))ds.$$

For the linear part  the  error  is estimated in Theorem
\ref{est.lin}:
\begin{equation}\label{estm.1}
\|\errlin\|_{L^q(0,T; \,l^r(h\zz))}\leq C(q)\eps(s,h)\max\{T,1\}\|\varphi\|_{H^s(\rr)}.
\end{equation}

 In the following we will estimate $\errnon$.
We write $\errnon(t)= I_2^h(t)+I_3^h(t)$ where
$$I_2^h(t)=\int_0^t \exp(i(t-s)A_h)\Big(f(u^h(s))-\th f(u(s))\Big)ds$$
and
$$I_3^h(t)=\int_0^t \Big(\exp(i(t-s)A_h)\th f(u(s))-\th \exp(i(t-s)\partial_x^2)f(u(s))\Big)ds.$$

\textbf{Step I. Estimate of $I_3^h$.}
For the last term, the inhomogeneous estimate \eqref{est.inh} in  Theorem \ref{est.lin}
and estimate \eqref{normebs} give us that
\begin{align}\label{est.10}
    \|I_3^h(t)\|_{L^q(0,T;\,l^r(h\zz))}&\leq C(q)\eps(s,h)\max\{1,T\} \|f(u)\|_{L^{q'_0}(0,T;\,B^{s}_{(p+2)',2}(\rr))}\\
  \nonumber  &\leq C(q)\eps(s,h)\max\{1,T\} T^{\frac {4-p(1-2s)}4}\|u\|_{L^q(0,T;B^s_{p+2,2}(\rr))}^{p+1}.
\end{align}

\textbf{Step II. Estimate of $I_2^h$.}
We now prove the existence of a time $T_0$ such that for all $T<T_0$,  $I_2^h$ satisfies
\begin{align}
\label{estm.2}\|I_2(t)&\|_{L^q(0,T;\,l^r(h\zz))}\\
\nonumber& \leq C(p) T^{1-\frac p4} \|\err\|_{L^{q_0}(0,T;\, l^{p+2}(h\zz))}\|\varphi\|_{L^2(\rr)}^p+
h^s T^{1-\frac p4} \|u\|_{L^{q_0}(0,T;B^s_{p+2,2}(\rr))}^{p+1}.
\end{align}

The inhomogeneous Strichartz's estimate \eqref{dis.stric.inhomg} applied to the operators $(\exp(itA_h))_{t\geq 0}$
shows that
\begin{align}\label{est.2000}
\|I_2^h(t)&\|_{L^q(0,T;\,l^r(h\zz))}\leq C(q) \|f(u^h)-\th f(u)\|_{L^{q_0'}(0,T;\,l^{(p+2)'}(h\zz))}\\
\nonumber&\leq C(q)\|f(u^h)- f(\th u)\|_{L^{q_0'}(0,T;\,l^{(p+2)'}(h\zz))}+C(q)\|f(\th u)-\th
f(u)\|_{L^{q_0'}(0,T;\,l^{(p+2)'}(h\zz))}.
\end{align}

We  evaluate each term in the right hand side of \eqref{est.200}.
In the case of the first one, applying H\"older's inequality in time we get
\begin{align*}
\|f(u^h)&- f(\th u)\|_{L^{q_0'}(0,T;\,l^{(p+2)'}(h\zz))}\\
&\leq T^{1-\frac p4}\|u^h-\th u\|_{L^{q_0}(0,T;\, l^{p+2}(h\zz))}\Big(\|u^h\|_{L^{q_0}(0,T;\, l^{p+2}(h\zz))}^p+\|\th u\|_{L^{q_0}(0,T;\, l^{p+2}(h\zz))}^p\Big).
\end{align*}

Let us now set $T_0$ as it is given by Lemma \ref{stab.1} and Theorem \ref{schnon}:
$$T_0\simeq \|\varphi\|_{L^2(\rr)}^{-\frac{4p}{4-p}}.$$
Thus, by Theorem \ref{exis.uniq}, Lemma \ref{stab.1} and Theorem \ref{multschnon} both  $u^h$ and $\th u$ have their $L^q(0,T;\,l^r(h\zz))$-norm  controlled by the $L^2$-norm of the initial data:
$$\|u^h\|_{L^{q_0}(0,T_0;\, l^{p+2}(h\zz))}\leq C(p)\|\varphi\|_{L^2(\rr)}$$
and
$$\|\th u\|_{L^{q_0}(0,T_0;\, l^{p+2}(h\zz))}\leq C(p)\|u\|_{L^{q_0}(0,T_0;\, L^{p+2}(\rr))}\leq C(p)\|\varphi\|_{L^2(\rr)}.$$
These estimates  show that for any $T<T_0$ the following holds:
\begin{align}\label{est.2001}
\|f(u^h)- f(\th u)& \|_{L^{q_0'}(0,T;\,l^{(p+2)'}(h\zz))}\leq C(p)T^{1-\frac p4}\|u^h-\th u\|_{L^{q_0}(0,T;\, l^{p+2}(h\zz))}
\|\varphi\|_{L^2(\rr)}^p.
\end{align}

It remains to estimate the second term in the right hand side of
\eqref{est.2000}.
We will use now the following result which will be proved in Section \ref{putos.lemas}.
\begin{lemma}\label{lemma.non}
Let $s\in [0,1]$, $p\geq 0$ and $f(u)=|u|^pu$. Then there exists a positive constant $c(p,s)$ such that
\begin{equation}\label{inter.1}
    \|f(\th u)-\th f(u)\|_{l^{(p+2)'}(h\zz)}\leq c(p,s)h^{s}\|u\|_{W^{s,p+2}(\rr)}^{p+1}
\end{equation}
holds for all $u\in W^{s,p+2}(\rr)$ and $h>0$.
\end{lemma}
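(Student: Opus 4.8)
The plan is to transfer everything to $L^{p+2}(\rr)$–estimates on the continuous side, exploiting that $\th u$ is nothing but the restriction to $h\zz$ of the band‑limited truncation of $u$. Write $v$ for the interpolator $\widetilde{\th u}$, i.e. the $L^2(\rr)$–function with $\what v=\what u$ on $[-\pi/h,\pi/h]$ and $\what v=0$ outside; thus $\th u=v|_{h\zz}$, and since $f$ acts pointwise $f(\th u)=f(v)|_{h\zz}$, whereas $\th f(u)=(\Pi_h f(u))|_{h\zz}$, where $\Pi_h$ denotes Fourier truncation to $[-\pi/h,\pi/h]$. Adding and subtracting $(\Pi_h f(v))|_{h\zz}$ gives
\begin{equation*}
f(\th u)-\th f(u)=(f(v)-\Pi_h f(v))|_{h\zz}+\th(f(v)-f(u))=:\mathrm I+\mathrm{II},
\end{equation*}
so it suffices to bound $\|\mathrm I\|_{l^{(p+2)'}(h\zz)}$ and $\|\mathrm{II}\|_{l^{(p+2)'}(h\zz)}$ by $c(p,s)\,h^s\|u\|_{W^{s,p+2}(\rr)}^{p+1}$.

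For $\mathrm{II}$ I would combine the Plancherel–Polya inequality \eqref{norm.equiv} (applicable since $(p+2)'\in(1,2]$) with the uniform‑in‑$h$ boundedness of $\Pi_h$ on $L^{(p+2)'}(\rr)$ to get $\|\mathrm{II}\|_{l^{(p+2)'}(h\zz)}\lesssim\|f(v)-f(u)\|_{L^{(p+2)'}(\rr)}$; then the elementary bound $|f(v)-f(u)|\lesssim(|v|^p+|u|^p)|v-u|$, Hölder's inequality, the boundedness of $\Pi_h$ on $L^{p+2}(\rr)$, and the truncation estimate $\|u-\Pi_h u\|_{L^{p+2}(\rr)}\lesssim h^s\|u\|_{W^{s,p+2}(\rr)}$ — which follows from Marcinkiewicz's multiplier theorem (Theorem \ref{graf}) because the symbol $|\xi|^{-s}{\bf 1}_{\{|\xi|\ge \pi/h\}}$ has multiplier norm $\lesssim h^s$ — yield $\|\mathrm{II}\|_{l^{(p+2)'}(h\zz)}\lesssim\|u\|_{W^{s,p+2}}^p\,h^s\|u\|_{W^{s,p+2}}$, as wanted.

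The essential point is the estimate of $\mathrm I$. Since $f(v)$ is not band‑limited, \eqref{norm.equiv} is unavailable; instead I would use the elementary sampling inequality
\begin{equation*}
\|g|_{h\zz}\|_{l^q(h\zz)}\lesssim\|g\|_{L^q(\rr)}+h\,\|\partial_x g\|_{L^q(\rr)},\qquad g\in W^{1,q}(\rr),\quad 1<q<\infty,
\end{equation*}
obtained by writing $h\,|g(jh)|^q\le\int_{jh}^{(j+1)h}|g|^q+h\int_{jh}^{(j+1)h}q\,|g|^{q-1}|\partial_x g|$ and summing over $j$. This applies with $q=(p+2)'$ to $g=(I-\Pi_h)f(v)$, which is legitimate because $v$, being band‑limited, lies in every $W^{k,p+2}(\rr)$, so $f(v)\in W^{1,(p+2)'}(\rr)$. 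Then, applying the same truncation bound to $f(v)$ and the fractional chain rule $\||\nabla|^s(|v|^pv)\|_{L^{(p+2)'}}\lesssim\|v\|_{L^{p+2}}^p\||\nabla|^sv\|_{L^{p+2}}$ (a standard Christ–Weinstein estimate, valid for $s\in(0,1)$; the ordinary chain rule for $s=1$), one gets $\|(I-\Pi_h)f(v)\|_{L^{(p+2)'}}\lesssim h^s\|f(v)\|_{\dot W^{s,(p+2)'}}\lesssim h^s\|v\|_{L^{p+2}}^p\|v\|_{\dot W^{s,p+2}}\lesssim h^s\|u\|_{W^{s,p+2}}^{p+1}$; and $h\,\|\partial_x(I-\Pi_h)f(v)\|_{L^{(p+2)'}}\lesssim h\|f(v)\|_{\dot W^{1,(p+2)'}}\lesssim h\,\|v\|_{L^{p+2}}^p\|\partial_x v\|_{L^{p+2}}$, where the Bernstein inequality for the band‑limited $v$ yields $\|\partial_x v\|_{L^{p+2}}\lesssim h^{-(1-s)}\|v\|_{\dot W^{s,p+2}}\lesssim h^{-(1-s)}\|u\|_{W^{s,p+2}}$, so this term is again $\lesssim h^s\|u\|_{W^{s,p+2}}^{p+1}$. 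Adding the two contributions bounds $\|\mathrm I\|_{l^{(p+2)'}(h\zz)}$ and the lemma follows. (For $p=0$ the left side vanishes identically; the endpoints $s=0,1$ are covered by the same argument with $\dot W^{0,\cdot}=L^\cdot$.)

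The main obstacle is precisely the reason \eqref{norm.equiv} cannot be used for $\mathrm I$: applying the nonlinearity destroys band‑limitation, so sampling $f(v)$ on $h\zz$ produces aliasing, and any crude control of the aliased contribution loses too much when $s$ is small — one would be forced to require $s>(p+1)/(p+2)$, incompatible with the range $s\in(0,1/2)$ needed in Theorem \ref{main}. The remedy is to retain the information that $v=\widetilde{\th u}$ is smooth on scale $h$: although $\|f(v)\|_{\dot W^{1,(p+2)'}}$ may be as large as $h^{-(1-s)}$, this growth is exactly offset by the factor $h$ of the sampling inequality applied to a function whose low‑frequency part has been removed. Beyond this, the only points requiring care are the fractional chain rule for $|v|^pv$ and the verification, via Theorem \ref{graf}, that the multipliers $\Pi_h$, $|\xi|^{-s}{\bf 1}_{\{|\xi|\ge \pi/h\}}$ and $i\xi|\xi|^{-s}{\bf 1}_{\{|\xi|\le \pi/h\}}$ act on $L^{p+2}(\rr)$ with the stated norms, uniformly in $h>0$.
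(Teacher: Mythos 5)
Your proposal is correct, and its skeleton coincides with the paper's: you add and subtract the nonlinearity evaluated at the band-limited truncation $\tuh=\Pi_h u$ (your $v$), and your term $\mathrm{II}$ is treated exactly as in the paper — Plancherel--Polya \eqref{norm.equiv} plus uniform $L^q$-boundedness of the truncation \eqref{est.lp}, the Lipschitz bound for $f$, H\"older, and the truncation estimate $\|u-\tuh\|_{L^{p+2}}\lesssim h^s\|u\|_{\dot W^{s,p+2}}$ of Lemma \ref{lemma.1}(a). Where you genuinely diverge is the aliasing term $\mathrm{I}$: the paper writes it as $(\th-\eh)f(\tuh)$ and invokes Lemma \ref{lemma.3} with one derivative, $\|\th g-\eh g\|_{l^{(p+2)'}}\lesssim h\|g\|_{\dot W^{1,(p+2)'}}$ (whose proof needs the Plancherel--Polya control of the aliasing series, Marcinkiewicz on shifted frequency blocks, and complex interpolation), and then the chain rule plus the Bernstein-type bound \eqref{est.2}; you instead bypass Lemma \ref{lemma.3} entirely with the elementary sampling inequality $\|g|_{h\zz}\|_{l^q}\lesssim\|g\|_{L^q}+h\|\partial_x g\|_{L^q}$ applied to $g=(I-\Pi_h)f(v)$, estimating the zero-order part via the fractional chain rule of Christ--Weinstein and the derivative part via Bernstein. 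Both routes give the same $h^s$ rate; yours is more elementary at the level of sampling (no interpolation machinery), but it imports the fractional chain rule, which the paper deliberately avoids and does not prove. Note you could dispense with it: bounding the zero-order part by $\|(I-\Pi_h)f(v)\|_{L^{(p+2)'}}\lesssim h\|f(v)\|_{\dot W^{1,(p+2)'}}\lesssim h\,\|v\|_{L^{p+2}}^p\|\partial_x v\|_{L^{p+2}}\lesssim h^s\|u\|_{W^{s,p+2}}^{p+1}$ (Lemma \ref{lemma.1}(a) with exponent $1$, then \eqref{est.2}) makes your argument self-contained with the paper's ingredients only. One small caveat, shared with the paper's own Lemma \ref{lemma.1}: the symbols $|\xi|^{-s}{\bf 1}_{\{|\xi|\ge\pi/h\}}$ and $i\xi|\xi|^{-s}{\bf 1}_{\{|\xi|\le\pi/h\}}$ have a jump at $\pm\pi/h$, so Theorem \ref{graf} must be applied in its bounded-variation form on a $\pi$-adic grid, as the remark after Theorem \ref{graf} indicates.
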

Using this lemma, H\"older inequality in time and the embedding $B^s_{p+2,2}(\rr)\hookrightarrow W^{s,p+2}(\rr)$ (\cite{1055.35003}, Remark 1.4.3) we obtain:
\begin{align}\label{est.2003}\|f(\th u)-\th
f(u)\|_{L^{q_0'}(0,T;\,l^{(p+2)'}(h\zz))}&\leq c(p,s) h^{s}T^{1-\frac{p}4}\|u\|_{L^{q_0}(0,T;\,W^{s,p+2}(\rr))}^{p+1}\\
\nonumber& \leq c(p,s) h^{s}T^{1-\frac{p}4}\|u\|_{L^{q_0}(0,T;\,B^s_{p+2,2}(\rr))}^{p+1}.
\end{align}

Both \eqref{est.2001} and \eqref{est.2003} show  that $I_2(t)$ satisfies \eqref{estm.2}.

\medskip
\textbf{Step III. Estimate of $\err$.}
Collecting estimates \eqref{estm.1}, \eqref{est.10} and \eqref{estm.2} for both $(q,r)=(q_0,p+2)$ and
$(q,r)=(\infty,2)$ we obtain that for any $T<T_0$ the error $\err$ satisfies:
\begin{align}\label{error.2}
 \|\err&\|_{L^{q_0}(0,T;\,l^{p+2}(h\zz))}+\|\err\|_{L^\infty (0,T;\,l^2(h\zz))} \\
\nonumber \leq & C(p) \max\{1,T\}\eps(s,h)\|\varphi\|_{H^s(\rr)}+C(p)\|\err\|_{L^{q_0}(0,T;\, l^{p+2}(h\zz))} T^{1-p/4}\|\varphi\|_{L^2(\rr)}^p \\
\nonumber &+h^{s}T^{1-\frac p4}\|u\|_{L^{q_0}(0,T;\,B^{s}_{p+2,2}(\rr))}^{p+1}+\eps(s,h) \max\{1,T\}T^{\frac{4-p(1-2s)}4}
\|u\|_{L^{q_0}(0,T;B^s_{p+2,2}(\rr))}^{p+1}.
\end{align}
Now, let us set
$T_1\leq \min\{1,T_0\}$ such that
$$C(p)T_1^{1-p/4}\|\varphi\|_{L^2(\rr)}^p \leq \frac 12.$$
Then the error term $\err$ in the right hand side of \eqref{error.2} is absorbed in the left hand side:
\begin{align*}
\|\err  \|_{L^{q_0}(0,T_1;\,l^{p+2}(h\zz))}&+\|\err\|_{L^\infty (0,T_1;\,l^2(h\zz))}\\
&\leq C(p)\eps(s,h)\|\varphi\|_{H^s(\rr)}+C(p)
\|u\|_{L^{q_0}(0,T;B^s_{p+2,2}(\rr))}^{p+1}\Big(h^{s}+\eps(s,h) \Big).
\end{align*}

We now obtain the same estimate in any interval $(0,T)$.
Using that the $L^2(\rr)$-norm of the solution $u$ is conserved in time  we can apply
 the same argument in the interval $[kT_1,(k+1)T_1]$:
\begin{align*}
\|\err &\|_{L^{q_0}(kT_1,(k+1)T_1;\,l^{p+2}(h\zz))}+\|\err\|_{L^\infty (kT_1,(k+1)T_1;\,l^2(h\zz))}\\
&\leq C(p) \eps(s,h)\|u(kT_1)\|_{H^s(\rr)}+C(p)\Big(h^{s}+\eps(s,h)\Big)\|u\|_{L^{q_0}(kT_1,(k+1)T_1;\,B^{s}_{p+2,2}(\rr))}^{p+1}.
\end{align*}
Let us choose $T>0$ and $N\geq 1$ an integer such that $(N-1)T_1\leq T< NT_1 $. Thus
\begin{align*}
\|& \err \|_{L^{q_0}(0,T;\,l^{p+2}(h\zz))}+\|\err\|_{L^\infty (0,T;\,l^2(h\zz))}\\
&\leq \sum_{k=0}^{N-1}\Big(\| \err \|_{L^{q_0}(kT_1,(k+1)T_1;\,l^{p+2}(h\zz))}+\|\err\|_{L^\infty (kT_1,(k+1)T_1;\,l^2(h\zz))}\Big)\\
&\leq C(p)\eps(s,h)\sum_{k=0}^{N-1} \|u(kT_1)\|_{H^s(\rr)}+C(p)\Big(h^{s}+\eps(s,h)\Big)\sum_{k=0}^{N-1}
\|u\|_{L^{q_0}(kT_1,(k+1)T_1;\,B^{s}_{p+2,2}(\rr))}^{p+1}.
\end{align*}
Using that $(p+1)/q_0<1$ we have by the discrete H\"older's inequality that
$$\sum_{k=0}^{N-1}
\|u\|_{L^{q_0}(kT_1,(k+1)T_1;\,B^{s}_{p+2,2}(\rr))}^{p+1}\leq N^{1-\frac {p+1}{q_0}}\|u\|_{L^{q_0}(0,T;\,B^{s}_{p+2,2}(\rr))}^{p+1}.$$
Thus the error satisfies:
\begin{align*}
\| \err & \|_{L^{q_0}(0,T;\,l^{p+2}(h\zz))}+\|\err\|_{L^\infty (0,T;\,l^2(h\zz))}\\
&\leq  N \eps(s,h)\|u\|_{L^\infty(0,T;H^s(\rr))}+ \Big(h^{s}+\eps(s,h)\Big)
N^{1-\frac {p+1}{q_0}}\|u\|_{L^{q_0}(0,T;\,B^{s}_{p+2,2}(\rr))}^{p+1}\\
&\leq N \Big[\eps(s,h)\|u\|_{L^\infty(0,T;H^s(\rr))}+ \Big(h^{s}+\eps(s,h)\Big)
\|u\|_{L^{q_0}(0,T;\,B^{s}_{p+2,2}(\rr))}^{p+1}\Big]\\
&\leq C(T,\|\varphi\|_{L^2(\rr)})\Big[\eps(s,h)\|u\|_{L^\infty(0,T;H^s(\rr))}+ \Big(h^{s}+\eps(s,h)\Big)
\|u\|_{L^{q_0}(0,T;\,B^{s}_{p+2,2}(\rr))}^{p+1}\Big].
\end{align*}
This finishes the proof of Theorem \ref{main}.
\end{proof}
\section{Nondispersive methods}\label{nondisipative}
\setcounter{equation}{0}

In this section we will consider a numerical scheme for which the operator $A_h$ has no uniform (with respect to the mesh size $h$) dispersive properties
of Strichartz type. Accordingly we may not use $L^q_t L^r_x$ estimates for
the linear semigroup $\exp(itA_h)$
 and all the possible convergence 
estimates need to be based on the fact that the solution $u$ of the continuous problem is uniformly bounded in space and time:
$u\in L^\infty((0,T); L^\infty(\rr))$.  Thus, the only estimates we can use are those that the $L^2$-theory may yield.
When working with $H^s(\rr)$-data with $s>1/2$, using $L^\infty(\rr;H^s(\rr))$ estimates on solutions and
Sobolev's embedding we can get $L^2$-estimates.

There is a classical argument that works whenever  the  nonlinearity $f$ satisfies
\begin{equation}\label{lipshitz}
|f(u)-f(v)|\leq C (|u|^p+|v|^p)|u-v|.
\end{equation}
Standard error estimates (see Theorem \ref{est.lin} with the particular case $(q,r)=(\infty,2)$ or \cite{MR1005330}, Theorem 10.1.2, p. 201) and Gronwall's inequality yield when  $0\leq t\leq T$:
\begin{align}\label{star}
 \|u^h(t)-\th u(t)\|_{l^2(h\zz)}& \\
\nonumber\leq h^{1/2}C(T)\big(&\|\varphi\|_{H^1(\rr)}+ \|u\|_{L^\infty(0,T:\,H^1(\rr))}^{p+1}\big)\exp(T\|u\|_{L^\infty(0,T;\,H^1(\rr))}^p),
\end{align}
for the conservative semi-discrete finite-difference scheme. For the sake of completness we will prove this estimate in Section \ref{classical.smooth}.

We emphasize that in order to obtain estimate \eqref{star} we need to use that the solution $u$, which we want to approximate, belongs to the space $L^\infty(\rr)$, condition which is guaranteed by assuming that the initial data is smooth enough.  
However, obviously, in general, solutions of the NSE do not belong to $L^\infty(\rr)$ and therefore these estimates can not
be applied.
One can overcome this drawback assuming that the initial data belong to $H^1(\rr)$ or even to  $H^s(\rr)$ with $s>1/2$ since in this case  $H^s(\rr)\hookrightarrow L^\infty(\rr)$. Using
$H^1$-energy estimates and Sobolev's embedding we can deduce $L^\infty$-bounds on solutions allowing to apply \eqref{star}.
We emphasize  that this standard approach fails to provide any error estimate for initial data in $H^s(\rr)$
with $s<1/2$.

However,  this type of error estimate can also be used for $H^s(\rr)$-initial data
with $s<1/2$ (or even for $L^2(\rr)$-initial data), by a density argument.
Indeed, given $\varphi\in H^s(\rr)$ with $0\leq s<1/2$, for any $\delta>0$ we may choose $\varphi_\delta\in H^1(\rr)$
such that
$$\|\varphi-\varphi_\delta\|_{H^s(\rr)}\leq \delta.$$
Let $u_\delta$ be the solution of NSE corresponding to $\varphi_\delta$. Obviously, $\varphi_\delta$ being $H^1(\rr)$-smooth, we can apply standard results as \eqref{star} to $u_\delta$. On the other hand, stability results for NSE allow us to prove the
proximity of $u$ and $u_\delta$ in $H^s(\rr)$. This allows showing the convergence of numerical approximations of $u_\delta$, that we may denote by $u_{\delta,h}$,
towards the solution $u$ associated to $\varphi$ as both $\delta\rightarrow 0$ and $h\rightarrow 0$. But for this to be true $h$
needs to be exponentially small of the order of $\exp(-1/\delta)$ which is much smaller than the typical mesh-size needed to apply
the results of the previous sections on dispersive schemes that required $h$ to be of the order of $\delta^{2/s}$.

\subsection{A classical argument for  smooth initial data}\label{classical.smooth}

In this section we  present the technical details of the error estimates in the case of $H^1(\rr)$-initial data. In this
case we do not require the numerical scheme to be dispersive, the only ingredient being the Sobolev's embedding
$H^1(\rr)\hookrightarrow L^\infty(\rr)$.

\begin{teo}\label{regularizare}
Let  $f(u)=|u|^pu$ with $p\in (0,4)$ and $u\in C(\rr,\,H^1(\rr))$ be solution of \eqref{scha869} with initial data
$\varphi\in H^1(\rr)$. Also assume that $A_h$ is  an
approximation of order two of the laplace operator $\partial_x^2$ and $u^h$ is the solution of the following system
\begin{equation}\label{eq.dis.non.1}
    \left\{
\begin{array}{cc}
 \displaystyle i u^h_t + A_h u^h=f(u^h), & t>0,  \\[8pt]
  \displaystyle u^h(0) =\th \varphi, &
\end{array}
    \right.
\end{equation}
satisfying $\|u^h\|_{L^\infty((0,T)\times h\zz)}\leq C(T,\|\varphi\|_{H^1(\rr)})$.

Then for all $T>0$ and $h>0$
\begin{equation}
\|u^h(t)-\th u(t)\|_{l^2(h\zz)}\leq h^{1/2}\max\{T,T^2\}\big(\|\varphi\|_{H^1(\rr)}+ \|u\|_{L^\infty(0,T:\,H^1(\rr))}^{p+1}\big)\exp(T\|u\|_{L^\infty(0,T;\,H^1(\rr))}^p).
\end{equation}
\end{teo}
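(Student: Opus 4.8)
The plan is to combine the linear error estimate of Theorem \ref{est.lin} (in the case $(q,r)=(\infty,2)$, where it reduces to the classical consistency estimate \eqref{est.linf.l2}) with a Gronwall argument in $l^2(h\zz)$, using the $L^\infty$-bound on $u^h$ and the Sobolev embedding $H^1(\rr)\hookrightarrow L^\infty(\rr)$ to control the nonlinearity. Write both solutions via Duhamel:
$$u^h(t)=\exp(itA_h)\th\varphi+i\int_0^t\exp(i(t-s)A_h)f(u^h(s))\,ds,$$
and apply $\th$ to the continuous Duhamel formula, so that
$$\th u(t)=\th\exp(it\partial_x^2)\varphi+i\int_0^t\th\exp(i(t-s)\partial_x^2)f(u(s))\,ds.$$
Subtracting, the error $\err(t)=u^h(t)-\th u(t)$ splits as $\err(t)=\errlin(t)+I_2^h(t)+I_3^h(t)$ where $\errlin$ is the linear error, $I_2^h(t)=i\int_0^t\exp(i(t-s)A_h)(f(u^h(s))-\th f(u(s)))\,ds$ (after writing $\exp(i(t-s)A_h)\th f(u)$ and comparing), and $I_3^h(t)=i\int_0^t(\exp(i(t-s)A_h)\th f(u(s))-\th\exp(i(t-s)\partial_x^2)f(u(s)))\,ds$ is the linear error applied to the source term $f(u)$.

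Next I would estimate the three pieces in $L^\infty(0,T;l^2(h\zz))$. For $\errlin$, Theorem \ref{est.lin} with $s=1$ and $(q,r)=(\infty,2)$ gives $\|\errlin\|_{L^\infty(0,T;l^2(h\zz))}\leq C\max\{1,T\}h\|\varphi\|_{H^1(\rr)}$, which is stronger than the claimed $h^{1/2}$ rate. For $I_3^h$, the same theorem applied with the source term yields a bound by $C\max\{1,T\}h\int_0^T\|f(u(s))\|_{H^1(\rr)}\,ds$; since $f(u)=|u|^pu$ and $H^1(\rr)$ is a Banach algebra-type space under the Moser/Sobolev product estimate, $\|f(u)\|_{H^1(\rr)}\lesssim \|u\|_{L^\infty(\rr)}^p\|u\|_{H^1(\rr)}\lesssim \|u\|_{H^1(\rr)}^{p+1}$, so this term is bounded by $CTh\max\{1,T\}\|u\|_{L^\infty(0,T;H^1(\rr))}^{p+1}$. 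For $I_2^h$, use $l^2$-stability of $\exp(itA_h)$ (which follows from the order-two approximation being conservative/dissipative, i.e. $\mathrm{Re}(iA_h\varphi,\varphi)_h\le 0$) to get
$$\|I_2^h(t)\|_{l^2(h\zz)}\leq \int_0^t\|f(u^h(s))-\th f(u(s))\|_{l^2(h\zz)}\,ds,$$
and then split $f(u^h)-\th f(u) = (f(u^h)-f(\th u))+(f(\th u)-\th f(u))$. The first summand is controlled by the Lipschitz-type bound \eqref{lipshitz}: $\|f(u^h)-f(\th u)\|_{l^2(h\zz)}\lesssim(\|u^h\|_{L^\infty(h\zz)}^p+\|\th u\|_{L^\infty(h\zz)}^p)\|u^h-\th u\|_{l^2(h\zz)}$, where $\|u^h\|_{L^\infty}$ is bounded by hypothesis and $\|\th u\|_{l^\infty(h\zz)}\lesssim\|u\|_{L^\infty(\rr)}\lesssim\|u\|_{H^1(\rr)}$; this produces the Gronwall term $\int_0^t\|u\|_{L^\infty(0,T;H^1)}^p\|\err(s)\|_{l^2}\,ds$. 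The second summand $f(\th u)-\th f(u)$ is handled by Lemma \ref{lemma.non} with $s=1$, giving $\|f(\th u)-\th f(u)\|_{l^2(h\zz)}\lesssim h\|u\|_{W^{1,2}(\rr)}^{p+1}=h\|u\|_{H^1(\rr)}^{p+1}$, so after integration it contributes $CTh\|u\|_{L^\infty(0,T;H^1)}^{p+1}$.

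Collecting, one arrives at the integral inequality
$$\|\err(t)\|_{l^2(h\zz)}\leq h^{1/2}C\max\{T,T^2\}\big(\|\varphi\|_{H^1(\rr)}+\|u\|_{L^\infty(0,T;H^1(\rr))}^{p+1}\big)+C\|u\|_{L^\infty(0,T;H^1(\rr))}^p\int_0^t\|\err(s)\|_{l^2(h\zz)}\,ds,$$
and Gronwall's lemma then gives the stated bound with the factor $\exp(T\|u\|_{L^\infty(0,T;H^1)}^p)$. I would keep the $h^{1/2}$ rate rather than $h$ merely for uniformity with the rest of the paper (and because the Lemma \ref{lemma.non} step only needs $h$, while the constants are simplest stated this way); the $\max\{T,T^2\}$ absorbs the various $\max\{1,T\}$ and linear-in-$T$ factors. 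The main obstacle — and the only genuinely non-mechanical point — is controlling $\|f(u)\|_{H^1(\rr)}$ and the discrete difference $f(\th u)-\th f(u)$: the former needs the Sobolev product/composition estimate for $|u|^pu$ on the line (which is where $H^1\hookrightarrow L^\infty$ is essential, and where $p<4$, actually $p$ arbitrary here, plays a role only through the algebra property), and the latter is precisely the content of Lemma \ref{lemma.non}, whose proof is deferred to Section \ref{putos.lemas}. Everything else is Duhamel bookkeeping, the triangle inequality, $l^2$-stability, and Gronwall.
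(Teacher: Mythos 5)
Your proposal follows essentially the same route as the paper's proof: Duhamel representation of both $u^h$ and $\th u$, a three-way splitting of the error into the linear error, a term $I_2^h$ handled via $l^2$-stability of $\exp(itA_h)$, the Lipschitz bound \eqref{lipshitz} together with the $L^\infty$-bounds on $u^h$ and $u$ (Sobolev embedding), and an $l^2$-version of Lemma \ref{lemma.non}, a term $I_3^h$ handled by the linear consistency estimate applied to the source $f(u)$ with $\|f(u)\|_{H^1(\rr)}\lesssim\|u\|_{H^1(\rr)}^{p+1}$, and finally Gronwall. The resulting integral inequality and the conclusion are the ones in the paper.

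One correction of substance: your assertion that Theorem \ref{est.lin} with $s=1$, $(q,r)=(\infty,2)$ gives a rate $h$ for $\errlin$ and for $I_3^h$, so that the $h^{1/2}$ in the statement is ``kept merely for uniformity,'' is not right. For an order-two approximation the symbol error is $|a_h(\xi)-\xi^2|\lesssim h^2|\xi|^4$, i.e. $F=\{4\}$, $a(4,h)=h^2$, hence $\eps(s,h)=(h^2)^{\min\{s/4,1\}}=h^{s/2}$ for $s\le 4$; with $\varphi\in H^1(\rr)$ this is exactly $h^{1/2}$, and the same $h^{1/2}$ appears in the bound of $I_3^h$ since $f(u(\cdot))$ is only estimated in $H^1(\rr)$ (this is the content of \eqref{est.500}--\eqref{est.501}). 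A rate $h$ would require $H^2$-regularity. So the $h^{1/2}$ in the theorem is forced by the consistency estimate for $H^1$ data, not a cosmetic weakening; since your final inequality only invokes $h^{1/2}$, the conclusion survives, but the intermediate claims must be restated with $\eps(1,h)=h^{1/2}$. A minor point in the same spirit: Lemma \ref{lemma.non} is stated in the $l^{(p+2)'}(h\zz)$-norm, so for the bound $\|f(\th u)-\th f(u)\|_{l^2(h\zz)}\lesssim h\|u\|_{H^1(\rr)}^{p+1}$ you should say (as the paper does) that one repeats its proof in the $l^2$ setting; that commutation term genuinely is of order $h$, and it is the linear consistency terms that limit the overall rate to $h^{1/2}$.
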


We now give an example where the hypotheses of the above theorem are verified.
We consider the following NSE:
\begin{equation}\label{scha869.1}
\left\{
\begin{array}{l}
  iu_t+\partial_x^2 u = |u|^pu,  \,x\in \rr,\,t> 0,\\
  u(0,x)  =\varphi(x),\, x\in \rr,
  \end{array}\right.
\end{equation}
and its numerical approximation
\begin{equation}\label{eq.dis.non.1111}
    \left\{
\begin{array}{cc}
 \displaystyle i u^h_t + \Delta_h u^h=|u^h|^pu^h, & t>0,  \\[8pt]
  \displaystyle u^h(0) =\varphi^h. &
\end{array}
    \right.
\end{equation}
In the case of the continuous problem we have the following conservation laws (see \cite{1055.35003}, Corollary~4.3.4, p.~93):
$$\|u(t)\|_{L^2(\rr)}=\|\varphi\|_{L^2(\rr)}$$
and
$$\frac{d}{dt}\Big(\frac 12\int _{\rr}| u_x(t,x)|^2dx +\frac 1{p+2}\int _{\rr}|u(t,x)|^{p+2}dx\Big) =0.$$
 The same identities apply in the semi-discrete case (it suffices to multiply the equation \eqref{eq.dis.non.1111} by $\overline{u}^h$, respectively $\overline{u}^h_t$, to sum over the integers and to take the real part of the resulting identity):
 $$\|u^h(t)\|_{l^2(h\zz)}=\|\varphi^h\|_{l^2(h\zz)}$$
and
$$\frac d{dt}\Big( \frac h2 \sum_{j\in \zz} \big|\frac{u^h_{j+1}(t)-u^h_j(t)}h\big|^2+\frac h{p+2}\sum_{j\in \zz}|u^h_{j}(t)|^{p+2}  \Big)=0.$$
In view of the above identities, the hypotheses of Theorem \ref{regularizare} are verified.

\begin{proof}[Proof of Theorem \ref{regularizare}]
Using the variations of constants formula we get
$$\th u(t)=\th \exp(it\partial_x^2)\varphi+\int _0^t \th \exp(i(t-\sg)\partial_x^2)f(u(\sg))d\sg$$
and
$$u^h(t)=\exp(itA_h)\th \varphi +\int _0^t \exp(i(t-\sg)A_h)f(u^h(\sg))d\sg.$$
Then
\begin{align}\label{error.non.grow}
\err(t)&:=\|u^h(t)-\th u(t)\|_{l^2(h\zz)}\\
\nonumber&\leq \|\exp(itA_h)\th \varphi-\th \exp(it\partial_x^2)\varphi\|_{l^2(h\zz)}\\
\nonumber&\hspace{1cm}+\int_0^t \|\exp(i(t-\sg)A_h)\Big(f(u^h(\sg))-\th f(u(\sg))\Big)d\sg\|_{l^2(h\zz)}d\sg\\
\nonumber&\hspace{1cm}+\int _0^t \|\exp(i(t-\sg)A_h)\th f(u(\sg))-\th \exp((t-\sg)\partial_x^2)f(u(\sg))\|_{l^2(h\zz)}d\sg.
\end{align}
Now, applying the error estimates for the linear terms as in \eqref{est.linf.l2} with $\eps(1,h)=h^{1/2}$, we get
\begin{equation}\label{est.500}
\|\exp(itA_h))\th \varphi-\th \exp(it\partial_x^2)\varphi\|_{l^2(h\zz)}\leq T h^{1/2}\|\varphi\|_{H^1(\rr)}.
\end{equation}
Also, using that $f(u)=|u|^pu$ we have that $\|f(u)\|_{H^1(\rr)}\leq C \|u\|_{H^1(\rr)}^p$ and then
by \eqref{est.linf.l2} we  get
\begin{align}\label{est.501}
\int _0^t \|\exp(i(t-\sg)A_h)\th f(u(\sg))-&\th \exp(i(t-\sg)\partial_x^2)f(u(\sg))\|_{l^2(h\zz)}d\sg\\
\nonumber&\leq C T h^{1/2} \|f(u)\|_{L^1(0,T;\,H^1(\rr))}\\
\nonumber&\leq  C T^2 h^{1/2} \|u\|_{L^\infty(0,T;\,H^1(\rr))}^{p+1}.
\end{align}
Using the $l^2(h\zz)$-stability of $\exp(itA_h)$, \eqref{error.non.grow}, \eqref{est.500} and \eqref{est.501} we obtain
$$\err(t)\leq T h^{1/2}\|\varphi\|_{H^1(\rr)} +CT^2 h^{1/2} \|u\|_{L^\infty(0,T;\,H^1(\rr))}^{p+1}+
\int _0^t \|f(u^h(\sg))-\th f(u(\sg)\|_{l^2(h\zz))}.$$
Now we write $f(u^h(s))-\th f(u(s))=I_1^h(s)+I_2^h(s)$
where
$$I_1^h(s)=f(u^h(s))-f(\th u(s) ),\,I_2^h(s)=f(\th u(s))-\th f(u(s)).$$
In the case of $I_1^h$ we use that $f$ satisfies \eqref{lipshitz} 
to get
\begin{align*}
\|I_1^h(s)\|_{l^2(h\zz)}
&\leq C\Big(\|u^h(s)\|_{l^\infty(h\zz)}^p+\|\th u(s)\|_{l^\infty(h\zz)}^p\Big)\|u^h(s)-\th u(s)\|_{l^2(h\zz)}\\
&\leq C(\|u^h\|_{L^\infty((0,T)\times h\zz)}^p+\|u\|_{L^\infty((0,T)\times \rr)}^p)
\|u^h(s)-\th u(s)\|_{l^2(h\zz)}\\
&\leq C\|u\|_{L^\infty(0,T;\, H^1(\rr))}^p\err(s).
\end{align*}
Using the same arguments as in Lemma \ref{lemma.non} we obtain that
$$\|I_2^h(s)\|_{l^2(h\zz)}\leq h\|u(s)\|_{H^1(\rr)}^{p+1}.$$

Putting together all the above estimates, for any $0\leq t\leq  T$ we obtain:
\begin{align*}
\err(t)&\leq h^{1/2}T\|\varphi\|_{H^1(\rr)}+\|u\|_{L^\infty(0,T;\, H^1(\rr))}^p
\int_0^t \err(\sg)d\sg\\
&\hspace{1cm}+hT\|u\|_{L^\infty(0,T;H^1(\rr))}^{p+1}+T^2 h^{1/2} \|u\|_{L^\infty(0,T:\,H^1(\rr))}^{p+1}\\
&\leq h^{1/2}\max\{T,T^2\}\big(\|\varphi\|_{H^1(\rr)}+ \|u\|_{L^\infty(0,T:\,H^1(\rr))}^{p+1}\big)+\|u\|_{L^\infty(0,T;\,H^1(\rr))}^p
\int_0^t \err(s)ds.
\end{align*}
Applying Gronwall's Lemma we obtain
\begin{align}
\label{error.smoot}\err(t)
\lesssim h^{1/2}\max\{T,T^2\}\big(\|\varphi\|_{H^1(\rr)}+ \|u\|_{L^\infty(0,T:\,H^1(\rr))}^{p+1}\big)\exp(T\|u\|_{L^\infty(0,T;\,H^1(\rr))}^p).
\end{align}
The proof is now finished.
\end{proof}

\subsection{Approximating $H^s(\rr)$, $s<1/2$, solutions by smooth ones.}
Given $\varphi\in H^s(\rr)$ we choose an approximation $\tvarphi\in H^1(\rr)$
 such that $\|\varphi-\tvarphi\|_{H^s(\rr)}$ is small (a similar analysis can be done by considering $\varphi_\delta\in H^{s_1}$ with
 $s_1>1/2$).
 For $\tvarphi$ we consider the following approximation of $\tilde u$
solution of the NSE \eqref{scha869} with initial data $\tvarphi$:
\begin{equation}\label{as}
    \left\{
\begin{array}{cc}
 \displaystyle i \partial_t \tuh(t) + A_h \tuh=f(\tuh), & t>0,  \\
 \\
  \displaystyle \tuh(0) =\th \tvarphi, &
\end{array}
    \right.
\end{equation}
where the operator $A_h$ is a second order  approximation of the Laplace operator. We do not require the linear scheme associated
to the operator $A_h$ to satisfy uniform dispersive estimates.

Solving \eqref{as} we  obtain an approximation $\tuh$ of the solutions $\tilde u$ of NSE with initial datum $\tvarphi$,
which itself is an approximation of the solution $u$ of the NSE with initial datum $\varphi$.

In the following Theorem we give an explicit estimate of the distance between $\tuh$ and $u$.
\begin{teo}\label{aprox.regular}
Let $0\leq s< 1/2$, $\varphi\in H^s(\rr)$, and $u\in C(\rr;\,H^s(\rr))$ be the solution of NSE with initial datum
$\varphi$ given by Theorem \ref{exis.uniq}.
For any $T>0$ there exists a positive constant $C(T,\|\varphi\|_{L^2(\rr)})$ such that the following holds
\begin{equation}\label{est.120}
   \|\th u-\tuh\|_{L^\infty(0,T;\,l^2(h\zz))}\leq C(T,p,\|\varphi\|_{L^2(\rr)})\|\varphi-\tvarphi\|_{L^2(\rr)} +h^{1/2}\exp\big(T\|\tilde u\|_{L^\infty(0,T;\,H^1(\rr))}^p\big)
\end{equation}
 for all $h>0$ and $\delta>0$.
\end{teo}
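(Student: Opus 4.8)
The plan is to split the error $\th u-\tuh$ by inserting the exact semidiscretization of the smooth solution,
$$\|\th u-\tuh\|_{L^\infty(0,T;\,l^2(h\zz))}\leq \|\th u-\th \tilde u\|_{L^\infty(0,T;\,l^2(h\zz))}+\|\th \tilde u-\tuh\|_{L^\infty(0,T;\,l^2(h\zz))},$$
where $\tilde u$ denotes the solution of the NSE \eqref{scha869} with initial datum $\tvarphi\in H^1(\rr)$, which exists globally in $C(\rr,\,H^1(\rr))$ by Theorem \ref{exis.uniq}. The first term measures the distance between the two continuous solutions and will be handled purely by the $L^2$ well-posedness theory, while the second is the numerical error committed for an $H^1(\rr)$ datum, which is exactly the situation covered by Theorem \ref{regularizare}.

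For the first term I would use that the operator $\th$ maps $L^2(\rr)$ boundedly into $l^2(h\zz)$, uniformly in $h$ (the bound $\|\th\psi\|_{l^2(h\zz)}\lesssim \|\psi\|_{L^2(\rr)}$ already employed in the proof of Theorem \ref{est.two.grid}), so that $\|\th u(t)-\th\tilde u(t)\|_{l^2(h\zz)}\lesssim\|u(t)-\tilde u(t)\|_{L^2(\rr)}$ for every $t\in[0,T]$; then the stability statement of Theorem \ref{exis.uniq} ii) applied to the pair $\varphi,\tvarphi$ gives $\|u-\tilde u\|_{L^\infty(0,T;\,L^2(\rr))}\leq C(T,\|\varphi\|_{L^2(\rr)},\|\tvarphi\|_{L^2(\rr)})\|\varphi-\tvarphi\|_{L^2(\rr)}$, and since $\|\tvarphi\|_{L^2(\rr)}\leq\|\varphi\|_{L^2(\rr)}+\|\varphi-\tvarphi\|_{L^2(\rr)}$ the constant may be taken to depend only on $T$, $p$ and $\|\varphi\|_{L^2(\rr)}$ in the only relevant regime $\|\varphi-\tvarphi\|_{L^2(\rr)}\leq\|\varphi\|_{L^2(\rr)}$. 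For the second term I would invoke Theorem \ref{regularizare} with initial datum $\tvarphi$: its hypotheses hold because $A_h$ is a second order approximation of $\partial_x^2$ and, by the discrete energy identities as in the example following that theorem, the solution $\tuh$ of \eqref{as} obeys $\|\tuh\|_{L^\infty((0,T)\times h\zz)}\leq C(T,\|\tvarphi\|_{H^1(\rr)})$; this yields $\|\th\tilde u-\tuh\|_{L^\infty(0,T;\,l^2(h\zz))}\lesssim h^{1/2}\max\{T,T^2\}\big(\|\tvarphi\|_{H^1(\rr)}+\|\tilde u\|_{L^\infty(0,T;\,H^1(\rr))}^{p+1}\big)\exp\big(T\|\tilde u\|_{L^\infty(0,T;\,H^1(\rr))}^p\big)$, and absorbing the polynomial prefactor one recovers the $h^{1/2}$ term of \eqref{est.120}. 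Summing the two contributions completes the argument.

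The step I expect to be the main (albeit mild) obstacle is the bookkeeping of constants: one must be careful that the contribution coming from the difference of the two continuous solutions be controlled only through $\|\varphi\|_{L^2(\rr)}$ and not through the $H^1(\rr)$ norm of $\tvarphi$, which is allowed to be arbitrarily large. This is exactly why the $L^2$ stability theory of the NSE (Theorem \ref{exis.uniq} ii)) is used for that piece rather than an energy estimate, and why the large factor $\exp\big(T\|\tilde u\|_{L^\infty(0,T;\,H^1(\rr))}^p\big)$ is confined to the $h^{1/2}$ term; the resulting competition between the smallness of $\|\varphi-\tvarphi\|$ and the admissible mesh size $h$ is precisely the phenomenon discussed in the text preceding this subsection.
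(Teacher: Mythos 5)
Your proposal is correct and follows essentially the same route as the paper: the same triangle-inequality splitting through $\th\tilde u$, the $L^2(\rr)$ stability of the NSE (Theorem \ref{exis.uniq} ii)) for the continuous part, and the smooth-data error estimate of Theorem \ref{regularizare} (estimate \eqref{error.smoot}) for the semidiscrete part. Your extra remarks on the uniform $l^2(h\zz)$ bound for $\th$ and on keeping the constant dependent only on $\|\varphi\|_{L^2(\rr)}$ merely make explicit what the paper's proof leaves implicit.
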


%
%


In the following we show that the above method of regularizing the initial data $\varphi\in H^s(\rr)$ and then applying the $H^1(\rr)$ theory for that
approximation does not give the same rate of convergence $h^{s/2}$ obtained in the case of a dispersive method of order two
(see \eqref{est.main.grid.100}). This occurs since for $\|\varphi-\tvarphi\|_{L^2(\rr)}$ to be small,  $\|\tvarphi\|_{H^1(\rr)}$ needs to be large and 
$\| \tilde u \|_{L^\infty(0,T;\,H^1(\rr))}$ too.

To simplify the presentation we will consider the case $p=2$.

\begin{teo}\label{aproximare.log}
Let $p=2$, $0<s<1/2$, $\varphi\in H^s(\rr)$ and $u\in C(\rr,H^s(\rr))$ be solution of NSE with initial data $\varphi$ given by
Theorem \ref{exis.uniq} and $u _h^*$ be the best approximation with $H^1(\rr)$-initial data as given by \eqref{as} with the conservative
approximation $A_h=\Delta_h$. Then for
any time $T$, there exists a constant $C(\|\varphi\|_{H^s(\rr)},T,s)$ such that
\begin{equation}\label{estimare.proasta}
    \|\th u-u_h^*\|_{L^\infty(0,T;\,l^2(h\zz))}\leq C(\|\varphi\|_{H^s(\rr)},T,s)|\log h|^{-\frac s {1-s}}.
\end{equation}

\end{teo}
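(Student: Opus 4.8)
The plan is to realize the ``best $H^1$-approximation'' by a concrete frequency truncation of $\varphi$, to feed it into the error estimate of Theorem~\ref{regularizare} (equivalently Theorem~\ref{aprox.regular}) while keeping \emph{every} constant explicit in the $H^1$-norm of the truncation, and finally to balance the two resulting contributions by choosing the truncation level as a function of $h$.

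Concretely, I would first take $\tvarphi=\tvarphi_N$ with $\widehat{\tvarphi_N}=\hat\varphi\,{\bf 1}_{\{|\xi|\le N\}}$, where $N=N(h)\ge1$ is a free parameter fixed at the end. Since $\varphi\in H^s(\rr)$ one has the elementary frequency-localization bounds
\begin{equation*}
\|\varphi-\tvarphi_N\|_{L^2(\rr)}\lesssim N^{-s}\|\varphi\|_{H^s(\rr)},\qquad
\|\tvarphi_N\|_{H^1(\rr)}\lesssim N^{1-s}\|\varphi\|_{H^s(\rr)},\qquad
\|\tvarphi_N\|_{L^2(\rr)}\le\|\varphi\|_{L^2(\rr)}.
\end{equation*}
Let $\tilde u$ solve \eqref{scha869} with datum $\tvarphi_N$ and let $\tuh$ solve \eqref{as} (with $A_h=\Delta_h$) with datum $\th\tvarphi_N$; then $u_h^*=\tuh$ for this choice of datum. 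Splitting $\th u-u_h^*=\th(u-\tilde u)+(\th\tilde u-\tuh)$, using $\|\th\psi\|_{l^2(h\zz)}\lesssim\|\psi\|_{L^2(\rr)}$ together with the $L^2$-stability of the NSE flow (Theorem~\ref{exis.uniq}~(ii), the constant being uniform in $N$ because $\|\tvarphi_N\|_{L^2}\le\|\varphi\|_{L^2}$), I obtain
\begin{equation*}
\|\th u-u_h^*\|_{L^\infty(0,T;\,l^2(h\zz))}\lesssim C(T,\|\varphi\|_{L^2(\rr)})\,N^{-s}\,\|\varphi\|_{H^s(\rr)}\;+\;\|\th\tilde u-\tuh\|_{L^\infty(0,T;\,l^2(h\zz))}.
\end{equation*}

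The second term is the semidiscrete error for the \emph{smooth} datum $\tvarphi_N$, and the heart of the proof is to show it is $\le h^{1/2}$ times a polynomial in $N$ times $\exp(CT\,N^{1-s})$ --- the crucial point being the exponent $N^{1-s}$ rather than the $N^{2(1-s)}$ that a verbatim application of Theorem~\ref{regularizare} produces. To get this I would rerun the Gronwall argument of Theorem~\ref{regularizare}, replacing the crude Sobolev bound $\|v\|_{L^\infty(\rr)}\lesssim\|v\|_{H^1(\rr)}$ used there by the sharp one-dimensional Gagliardo--Nirenberg inequality $\|v\|_{L^\infty(\rr)}^2\lesssim\|v\|_{L^2(\rr)}\|v_x\|_{L^2(\rr)}$ combined with the conservation laws. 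Since \eqref{scha869} with $p=2$ is defocusing, mass and energy conservation give $\|\tilde u(t)\|_{L^2}=\|\tvarphi_N\|_{L^2}\le\|\varphi\|_{L^2}$ and, using the $H^1$- and (via Gagliardo--Nirenberg) $L^4$-bounds for $\tvarphi_N$,
\begin{equation*}
\|\tilde u_x(t)\|_{L^2(\rr)}^2\le2E(\tvarphi_N)\lesssim\|(\tvarphi_N)_x\|_{L^2}^2+\|\tvarphi_N\|_{L^4}^4\lesssim N^{2(1-s)},
\end{equation*}
the implied constant depending on $\|\varphi\|_{H^s(\rr)}$; hence $\|\tilde u\|_{L^\infty((0,T)\times\rr)}^2\lesssim N^{1-s}$, and the discrete conservation laws of the conservative scheme $A_h=\Delta_h$ together with the discrete Gagliardo--Nirenberg inequality give the analogous $h$-uniform bound $\|\tuh\|_{L^\infty((0,T)\times h\zz)}^2\lesssim N^{1-s}$. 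These two $L^\infty$-norms raised to the power $p=2$ are exactly what enters the Gronwall constant coming from $\|f(\tuh)-f(\th\tilde u)\|_{l^2(h\zz)}$; the linear error is $\lesssim Th^{1/2}\|\tvarphi_N\|_{H^1}\lesssim Th^{1/2}N^{1-s}$ by \eqref{est.linf.l2}; and the term $f(\th\tilde u)-\th f(\tilde u)$ contributes $\lesssim Th\,\|\tilde u\|_{L^\infty(0,T;\,H^1(\rr))}^{p+1}\lesssim Th\,N^{(p+1)(1-s)}$ by Lemma~\ref{lemma.non}. Gronwall's lemma then yields
\begin{equation*}
\|\th\tilde u-\tuh\|_{L^\infty(0,T;\,l^2(h\zz))}\le h^{1/2}\,Q(T,N)\,\exp\!\big(CT\,N^{1-s}\big),
\end{equation*}
with $C=C(\|\varphi\|_{H^s(\rr)})$ and $Q$ a fixed polynomial.

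Finally I would optimize in $N$: for $h$ small choose $N=N(h)$ so that $CT\,N^{1-s}=\tfrac14|\log h|$, i.e. $N\simeq\big(|\log h|/(4CT)\big)^{1/(1-s)}$ (which satisfies $1\le N\le\pi/h$ once $h$ is small; for $h$ bounded away from $0$ the bound is trivial from mass conservation). Then $\exp(CTN^{1-s})=h^{-1/4}$, so $h^{1/2}Q(T,N)\exp(CTN^{1-s})=h^{1/4}Q(T,N)$ tends to $0$ faster than any power of $|\log h|^{-1}$, whereas $N^{-s}\simeq(4CT)^{s/(1-s)}|\log h|^{-s/(1-s)}$; adding the two contributions gives
\begin{equation*}
\|\th u-u_h^*\|_{L^\infty(0,T;\,l^2(h\zz))}\le C(\|\varphi\|_{H^s(\rr)},T,s)\,|\log h|^{-\frac{s}{1-s}},
\end{equation*}
as asserted. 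The main obstacle is precisely the middle step: one must keep all constants explicit in $\|\tvarphi_N\|_{H^1}\simeq N^{1-s}$ and, above all, bring the exponent in the Gronwall factor down from $N^{2(1-s)}$ to $N^{1-s}$; this halving (obtained by using the sharp Gagliardo--Nirenberg inequality and the mass/energy conservation laws in place of $H^1\hookrightarrow L^\infty$) is exactly what turns the naive rate $|\log h|^{-s/(2(1-s))}$ into the claimed $|\log h|^{-s/(1-s)}$, and it is the reason the statement is restricted to $p=2$ --- for general $p\in(0,4)$ the same argument produces $\exp(O(N^{p(1-s)/2}))$ and hence the rate $|\log h|^{-2s/(p(1-s))}$.
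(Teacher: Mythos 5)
Your argument is correct in substance, but it follows a genuinely different route from the paper. The paper does not choose an explicit regularization: it takes the estimate of Theorem \ref{aprox.regular}, squares it so that the right-hand side becomes (after the rescaling by $\sqrt{2T}$) the functional $J_{h,\varphi}(g)=\tfrac12\|\varphi-g\|_{L^2(\rr)}^2+\tfrac h2\exp(\|g\|_{H^1(\rr)}^2)$, and then proves Lemma \ref{minimizar}: the minimizer $g_h$ is analyzed through its Euler--Lagrange equation $[I+h\exp(\|g_h\|_{H^1}^2)(I-\Delta)]g_h=\varphi$, the scalar quantity $c_h=\|g_h\|_{H^1(\rr)}$ is shown to satisfy $c_h^2\le|\log h|-\tfrac1{1-s}\log|\log h|+a_2$, and this yields $\min J_{h,\varphi}\lesssim|\log h|^{-s/(1-s)}$, together with a matching lower bound \eqref{estimate.below} showing optimality of the exponent (something your construction does not address, though the theorem does not require it). You instead take the concrete truncation $\tvarphi_N$, rerun the Gronwall argument of Theorem \ref{regularizare} keeping the sup-norms explicit, and replace the crude $H^1\hookrightarrow L^\infty$ bound by the sharp Gagliardo--Nirenberg inequality combined with mass and (defocusing) energy conservation, both for $\tilde u$ and, via the discrete conservation laws stated after Theorem \ref{regularizare} and a discrete Gagliardo--Nirenberg inequality, for $\tuh$; this halves the exponent in the Gronwall factor from $N^{2(1-s)}$ to $N^{1-s}$ and, after optimizing $N\simeq(|\log h|/CT)^{1/(1-s)}$, gives the stated rate directly for the norm itself. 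This is a real gain: a verbatim, unsquared application of Theorem \ref{aprox.regular} with $\exp(2T\|\tvarphi\|_{H^1}^2)$ only yields $|\log h|^{-s/(2(1-s))}$, and indeed the paper's own chain \eqref{est.1212} bounds the \emph{square} of the error by $\min J\lesssim|\log h|^{-s/(1-s)}$ before dropping the square, so your sharpened $\exp(CTN^{1-s})$ is exactly the ingredient that makes the unsquared rate come out cleanly. Two small points you should make explicit if you write this up: the $h$-uniform discrete Gagliardo--Nirenberg bound for $\tuh$ requires bounding the discrete energy of the datum $\th\tvarphi_N$ by $CN^{2(1-s)}$ (straightforward, since the symbol of the discrete difference is dominated by $|\xi|$), and the term $\|\th\tilde u(s)\|_{l^\infty(h\zz)}$ in the Gronwall constant should be bounded via the band-limited interpolant and Gagliardo--Nirenberg (it is not trivially dominated by $\|\tilde u\|_{L^\infty}$); also, $u_h^*$ being the \emph{best} $H^1$-approximation, your bound for the particular datum $\th\tvarphi_N$ dominates it, which is the correct logical direction.
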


To prove this result we will use in an essential manner the following Lemma.
\medskip
\begin{lemma}\label{minimizar}
Let $0<s<1$ and $h\in (0,1)$. Then for any $\varphi\in H^s(\rr)$ the functional $J_{h,\varphi}$ defined by
\begin{equation}\label{functional}
J_{h,\varphi}(g)=\frac 12 \|\varphi-g\|_{L^2(\rr)}^2+\frac h2\exp(\|g\|_{H^1(\rr)}^2)
\end{equation}
satisfies:
 \begin{equation}\label{estm.minim}
    \min _{g\in H^1(\rr)}J_{h,\varphi}(g)\leq C(\|\varphi\|_{H^s(\rr)},s)|\log h|^{- s/ (1-s)}.
 \end{equation}
 Moreover, the above estimate is optimal in the sense that the power of the $|\log h|$ term cannot be improved:
 for any $0<\epsilon<1-s$ there exists $\varphi_\epsilon \in H^s(\rr)$ such that
 \begin{equation}\label{estimate.below}
 \liminf_{h\rightarrow 0} \frac{ \min _{g\in H^1(\rr)}J_{h,\varphi_\eps}(g)}{|\log h|^{- (s+\eps)/ (1-s-\eps)}}>0.
 \end{equation}

\end{lemma}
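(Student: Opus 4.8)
The plan is to treat the two assertions separately: the upper bound \eqref{estm.minim} by exhibiting a good competitor $g$, and the optimality \eqref{estimate.below} by producing a single well-chosen datum $\varphi_\epsilon$. For \eqref{estm.minim} I would simply test $J_{h,\varphi}$ against the Fourier truncation $g_N=(\mathbf{1}_{[-N,N]}\hat\varphi)^\vee$, $N\geq1$. Slicing in frequency gives $\|\varphi-g_N\|_{L^2(\rr)}^2=\int_{|\xi|>N}|\hat\varphi|^2\leq N^{-2s}\|\varphi\|_{H^s(\rr)}^2$ and, writing $1+|\xi|^2=(1+|\xi|^2)^{1-s}(1+|\xi|^2)^s$ on $|\xi|\leq N$, one gets $\|g_N\|_{H^1(\rr)}^2\leq 2N^{2(1-s)}\|\varphi\|_{H^s(\rr)}^2$, so that
$$J_{h,\varphi}(g_N)\leq \tfrac12 N^{-2s}\|\varphi\|_{H^s(\rr)}^2+\tfrac h2\exp\big(2N^{2(1-s)}\|\varphi\|_{H^s(\rr)}^2\big).$$
I would then choose $N=N(h)$ so that the exponent equals $\tfrac12|\log h|$, i.e. $N^{2(1-s)}=|\log h|/(4\|\varphi\|_{H^s(\rr)}^2)$, which is legitimate ($N\geq1$) once $|\log h|\geq 4\|\varphi\|_{H^s(\rr)}^2$; for such $h$ the exponential term is $\tfrac12 h^{1/2}$, dominated by $|\log h|^{-s/(1-s)}$ for $h$ small, while the first term equals $C(\|\varphi\|_{H^s(\rr)},s)|\log h|^{-s/(1-s)}$ by the very choice of $N$. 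For the remaining bounded range of $h$ the bound is free, e.g. by testing with $g=0$, since there $|\log h|^{-s/(1-s)}$ is bounded below by a positive constant while $J_{h,\varphi}(0)=\tfrac12\|\varphi\|_{L^2(\rr)}^2+\tfrac h2$ is bounded above. Taking the infimum over $g\in H^1(\rr)$ (a minimum, since $J_{h,\varphi}$ is convex and coercive on $H^1(\rr)$) yields \eqref{estm.minim}.

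For the optimality I would fix $\sigma=s+\epsilon\in(s,1)$ and take $\varphi_\epsilon$ with $\hat\varphi_\epsilon(\xi)=|\xi|^{-1/2-\sigma}\mathbf{1}_{\{|\xi|\geq1\}}$; then $\varphi_\epsilon\in H^s(\rr)$ because $\int_{|\xi|\geq1}|\xi|^{2s-1-2\sigma}d\xi<\infty$ (here $s<\sigma$ is used). The core estimate is a lower bound for the $L^2$-distance from $\varphi_\epsilon$ to the $H^1$-ball of radius $R$: if $\|g\|_{H^1(\rr)}\leq R$, then for any $N\geq1$
$$\|\varphi_\epsilon-g\|_{L^2(\rr)}\geq\Big(\int_{|\xi|>N}|\hat\varphi_\epsilon|^2\Big)^{1/2}-\Big(\int_{|\xi|>N}|\hat g|^2\Big)^{1/2}\geq c(\sigma)N^{-\sigma}-N^{-1}R,$$
where $\int_{|\xi|>N}|\hat g|^2\leq N^{-2}\|g\|_{H^1(\rr)}^2$. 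Optimizing in $N$ (namely $N\simeq R^{1/(1-\sigma)}$) gives $\|\varphi_\epsilon-g\|_{L^2(\rr)}^2\geq c(\sigma)R^{-2\sigma/(1-\sigma)}$ for all $g$ with $\|g\|_{H^1(\rr)}\leq R$, provided $R$ is large.

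Finally I would conclude \eqref{estimate.below} by a dichotomy with $\beta=\sigma/(1-\sigma)=(s+\epsilon)/(1-s-\epsilon)$. Given small $h$ and any $g\in H^1(\rr)$: if $\|g\|_{H^1(\rr)}^2\geq|\log h|-\beta\log|\log h|$ then $\tfrac h2\exp(\|g\|_{H^1(\rr)}^2)\geq\tfrac12|\log h|^{-\beta}$; otherwise $\|g\|_{H^1(\rr)}\leq R$ with $R^2=|\log h|-\beta\log|\log h|\leq|\log h|$ and $R$ large, so the previous step gives $\tfrac12\|\varphi_\epsilon-g\|_{L^2(\rr)}^2\geq\tfrac{c(\sigma)}2 R^{-2\beta}\geq\tfrac{c(\sigma)}2|\log h|^{-\beta}$. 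In either case $J_{h,\varphi_\epsilon}(g)\geq c|\log h|^{-\beta}$ uniformly in $g$ for $h$ small, hence $\min_{g}J_{h,\varphi_\epsilon}(g)\geq c|\log h|^{-(s+\epsilon)/(1-s-\epsilon)}$ and \eqref{estimate.below} follows. The delicate point is precisely this optimality part: obtaining the sharp decay of the distance to $H^1$-balls and, above all, bookkeeping the $\log|\log h|$ corrections so that the exponential penalty in $J_{h,\varphi_\epsilon}$ is matched against the correct negative power of $|\log h|$ — it is exactly this correction that forces the arbitrarily small loss $\epsilon$, rather than the exact exponent $s/(1-s)$.
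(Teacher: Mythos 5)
Your argument is correct, but it follows a genuinely different route from the paper. The paper works with the exact minimizer: by convexity it solves the Euler--Lagrange equation, so $g_h=[I+h e^{c_h^2}(I-\Delta)]^{-1}\varphi$ with $c_h=\|g_h\|_{H^1(\rr)}$ solving a scalar fixed-point equation; the key technical device is a two-sided estimate for the root of $hx^\beta e^x=c$, which yields $c_h^2\leq |\log h|-\tfrac1{1-s}\log|\log h|+a_2$ and then the upper bound by evaluating $J$ at $g_h$, while the optimality is obtained from the same characterization applied to $\hat\varphi_\eps^2(\xi)=(1+\xi^2)^{-(s+1/2+\eps)}$ together with an explicit resolvent-type integral lower bound, giving $c_{\eps,h}^2\geq |\log h|-\tfrac1{1-s-\eps}\log|\log h|+a_1$. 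You bypass the variational characterization entirely: for the upper bound you test with the sharp Fourier truncation $g_N$ and tune $N$ so that the exponential penalty is $h^{1/2}$ (which is essentially what the true minimizer does, since the resolvent acts as a smoothed frequency cutoff at $|\xi|\sim (he^{c_h^2})^{-1/2}$); for the lower bound you prove a distance-to-$H^1$-balls estimate $\|\varphi_\eps-g\|_{L^2(\rr)}^2\gtrsim R^{-2\sigma/(1-\sigma)}$ for $\|g\|_{H^1(\rr)}\leq R$ by frequency splitting and the triangle inequality, and then run a dichotomy at the threshold $\|g\|_{H^1(\rr)}^2\gtrless|\log h|-\beta\log|\log h|$, which reproduces exactly the $\log|\log h|$ bookkeeping that in the paper comes from the auxiliary equation $hx^\beta e^x=c$ and forces the $\eps$-loss. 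Your version is more elementary and self-contained (no resolvent computations, no analysis of the scalar equation), at the price of not identifying the minimizer or the precise size $c_h$ of the optimal regularization, which the paper's computation provides as a byproduct; both proofs yield the same constants structure and the same conclusion. Minor points you should make explicit if this were written up: the existence of the minimum (convexity plus coercivity and weak lower semicontinuity, as you indicate), and the restriction to $h$ small in the choice $N\geq1$ and $R$ large, with the complementary range of $h$ handled by the trivial competitor $g=0$, as you already note.
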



\begin{remark}
We point out that, to obtain \eqref{estm.minim} and \eqref{estimate.below}, we will use in an essential manner that $s<1$. In fact in the case $s=1$ the
minimum of $J_h$ over $H^1(\rr)$ is of order $h$. This can be seen  by choosing $g=\varphi$ and observing that
 $J_h(\varphi)=h\exp(\|\varphi\|_{H^1(\rr)})$. This choice cannot be done if $\varphi\in H^s(\rr)\backslash H^1(\rr)$.
\end{remark}

\begin{proof}[Proof of Theorem \ref{aproximare.log}]
Let us choose $\tvarphi\in H^1(\rr)$ which approximates $\varphi$ in $H^s(\rr)$. Then
by Theorem \ref{aprox.regular} we get
\begin{align}\label{est.1212}
   \|\th u- \tuh\|_{L^\infty(0,T;\,l^2(h\zz))}^2& \leq C(T,\|\varphi\|_{L^2(\rr)}) \|\varphi-\tvarphi\|_{L^2(\rr)}^2 +h\exp\big(2T\|\tvarphi\|_{H^1(\rr)}^2\big)\\
\nonumber   &\leq C(T,\|\varphi\|_{L^2(\rr)})J_{h,\sqrt{2T}\varphi }(\sqrt{2T}\tvarphi),
\end{align}
where $\tuh$ is the solution of \eqref{as} with initial data $\th  \tvarphi$.

For each $h$ fixed,  in order to obtain the best approximation $ u_h^*$ of $\th u$, we have to choose in the right hand side of the above
inequality the function $\varphi^*$ which minimizes the  functional
$J_{h,\sqrt{2T}\varphi}(\cdot)$ defined by \eqref{functional}
over  $H^1(\rr)$. Using estimate \eqref{estm.minim} from Lemma \ref{minimizar} we obtain the desired result:
\begin{align*}
 \|\th u-u_h^*\|_{L^\infty(0,T;\,l^2(h\zz))}&\leq C(\|\varphi\|_{H^s(\rr)},T,s)\min _{\tvarphi\in H^1(\rr)}
J_{h,\sqrt{2T}\varphi }(\sqrt{2T}\tvarphi)\\
 &\leq C(\|\varphi\|_{H^s(\rr)},T,s)|\log h|^{-\frac s {1-s}}
 \end{align*}
where $u_h^*$ is the solution of  \eqref{as} with initial data $\th\varphi^*$
\end{proof}

\begin{proof}[Proof of Lemma \ref{minimizar}]
The  functional $J_{h,\varphi}$ is convex and its minimizer, $g_h$, is unique.
The function $g_h$ satisfies the following equation:
\begin{equation}\label{eq.minimizer}
-\varphi+g_h+h\exp(\|g_h\|_{H^1(\rr)}^2)(-\Delta g_h+g_h)=0
\end{equation}
and so
$$
    \left[I+h\exp(\|g_h\|_{H^1(\rr)}^2)(I-\Delta) \right]g_h=\varphi.
$$
Thus $c_h=\|g_h\|_{H^1(\rr)}$ is the unique  solution of
\begin{equation}\label{eq.ch}
    c_h=\Big\| (I-\Delta)^{1/2} \left[I+h\exp(c_h^2)(I-\Delta)\right]^{-1} \varphi\Big\|_{L^2(\rr)}.
\end{equation}

\textbf{Step I. A useful auxiliary function.} Let us consider the function $q_h(x)=hx^\beta \exp(x)-c$ for some positive constants $\beta$ and
$c$. We prove that there exist two constants $a_1(c)$ and $a_2(c)$ such that the solution $x_h$ of the equation $q_h(x)=0$ satisfies
\begin{equation}\label{cotas.xh}
    |\log h|-\beta \log |\log h|+a_1(c)\leq x_h \leq  |\log h|-\beta \log |\log h|+a_2(c).
\end{equation}
Let us choose a real number $a$. Using that $h=\exp(-|\log h|)$ we get:
\begin{align*}
q_h(( |\log h|-\beta \log |\log h|+a))&=\left( |\log h|-\beta \log |\log h|+a\right)^\beta \exp(-\beta\log|\log h| +a)-c\\
&=\left(1-\beta \frac {\log |\log h|}{|\log h|}+\frac a{|\log h|}\right)^\beta \exp(a)-c\\
&\rightarrow _{h\rightarrow 0} \exp(a)-c.
\end{align*}
Choosing now two constants $a_1$ and $a_2$ such that $\exp(a_1)<c<\exp(a_2)$ and using that the function $q_h$ is increasing
we obtain that, for  $h$ small enough, $x_h$, solution of $q_h(x)=0$,
satisfies \eqref{cotas.xh}.

\medskip

\textbf{Step II. Upper bounds on  $c_h$.}
Using that $\varphi\in H^s(\rr)$, identity \eqref{eq.ch}  gives us
\begin{align*}
    c_h&=\Big\| (I-\Delta)^{1/2} \left[I+h\exp(c_h^2)(I-\Delta)\right]^{-1} \varphi\Big\|_{L^2(\rr)}\\
    &=\Big\| (I-\Delta)^{(1-s)/2} \left[I+h\exp(c_h^2)(I-\Delta)\right]^{-1} (I-\Delta)^{s/2}\varphi\Big\|_{L^2(\rr)}\\
    &=(h e^{c_h^2})^{(s-1)/2}\Big\| \left[h e^{c_h^2}(I-\Delta)\right]^{(1-s)/2} \left[I+h e^{c_h^2}(I-\Delta)\right]^{-1}
     (I-\Delta)^{s/2}\varphi\Big\|_{L^2(\rr)}\\
     &\leq (h\exp(c_h^2))^{(s-1)/2}\|\varphi\|_{H^s(\rr)},
\end{align*}
since, when $s\in [0,1]$, the symbol in the Fourier variable of the operator 
$$ \left[h e^{c_h^2}(I-\Delta)\right]^{(1-s)/2} \left[I+h e^{c_h^2}(I-\Delta)\right]^{-1}$$
is less than one.

Then
$c_h^2(h \exp(c_h^2))^{1-s} \leq \|\varphi\|_{H^s(\rr)}^2$ and
\begin{equation}\label{eq.ch.1}
    (c_h^2)^{1/(1-s)}he^{c_h^2}\leq  \|\varphi\|_{H^s(\rr)}^{2/(1-s)}.
\end{equation}
Applying the result of Step I to $\beta =1/(1-s)$ and $c=\|\varphi\|_{H^s(\rr)}^{2/(1-s)}$ we obtain that $c_h$ satisfies:
\begin{equation}\label{main.est.ch}
c_h^2\leq |\log h|-\frac 1{1-s}\log |\log h|+a_2,
\end{equation}
for some constant $a_2=a_2(\|\varphi\|_{H^s(\rr)}^{2/(1-s)})$.
In particular, when $s<1$, 
$$h\exp(c_h^2)=\exp(c_h^2-|\log h|)\leq
\exp\Big(-\frac 1{1-s}\log |\log h|+a_2\Big)\rightarrow 0,$$ as $h\rightarrow 0$.

\medskip
\textbf{Step III. Estimates on $J_h(g_h)$.} Using that the minimizer $g_h$ satisfies  equation \eqref{eq.minimizer} and $c_h=\|g_h\|_{H^1(\rr)}$, we get
\begin{align*}
    2\min _{g\in H^1(\rr)}J_h(g)&=2J_h(g_h)=
    \|\varphi-g_h\|_{L^2(\rr)}+ h \exp(\|g_h\|_{H^1(\rr)}^2)\\
  &=  (h\exp(c_h^2))^2\|(I-\Delta)g_h\|_{L^2(\rr)}^2+h\exp(c_h^2)\\
    &=(h\exp(c_h^2))^2\|(I-\Delta)\left[I+h\exp(c_h^2)(I-\Delta)\right]^{-1}\varphi\|_{L^2(\rr)}^2+h\exp(c_h^2)\\
    &= (he^{c_h^2})^{s}
    \Big\|\left[he^{c_h^2}(I-\Delta)\right]^{1-s/2}\left[I+he^{c_h^2}(I-\Delta)\right]^{-1}(I-\Delta)^{s/2}\varphi\Big\|_{L^2(\rr)}^2+he^{c_h^2}\\
    &\leq (he^{c_h^2})^{s}\|\varphi\|_{H^s(\rr)}^2+he^{c_h^2}\leq (he^{c_h^2})^s\left(\|\varphi\|_{H^s(\rr)}^2+(he^{c_h^2})^{1-s}\right)\\
&\leq c(s,\|\varphi\|_{H^s(\rr)})(h\exp(c_h^2))^s,
\end{align*}
where in the last inequality we used that $s\leq 1$ and $h\exp(c_h^2)\rightarrow 0$ as $h\rightarrow 0$.

Thus, by \eqref{main.est.ch} we obtain that
\begin{equation}\label{min.j}
    \min _{g\in H^1(\rr)}J_h(g)\leq c(s,\|\varphi\|_{H^s(\rr)}) (h\exp(c_h^2))^s\leq c(s,\|\varphi\|_{H^s(\rr)}) |\log h|^{-\frac s{1-s}}.
\end{equation}

\medskip
\textbf{Step IV. A particular function $\varphi$.} Let us choose $\eps>0$  and $\varphi_\eps$ be defined by means of its Fourier transform
$$\hat \varphi_\eps^2(\xi)=\frac 1{(1+\xi^2)^{s+\frac 12+\eps}}.$$
Thus, for any $\eps>0$,  $\varphi_\eps\in H^s(\rr)$. We will prove that, in this case, the solution  $c_{\eps,h}$ of \eqref{eq.ch} satisfies
\begin{equation}\label{est.c.eps.h}
    c_{\eps,h}^2\geq |\log h|-\frac 1{1-s-\eps}\log |\log h|+a_1
\end{equation}
and
\begin{equation}\label{est.min.eps.h}
    \min _{g\in H^1(\rr)}J_{h,\varphi_\eps}(g)\geq  (h\exp(c_{\eps,h}^2))^{s+\eps} \geq |\log h|^{- (s+\eps)/ (1-s-\eps)},
\end{equation}
for some constant $a_1$.

To prove \eqref{est.c.eps.h} and \eqref{est.min.eps.h} we
claim that for any $\gamma \in (-1/2,2)$ and $x$ large enough the following holds:
\begin{equation}\label{ineg.tonta}
    \int _{\rr} \frac{(1+\xi^2)^\gamma}{(x+1+\xi ^2)^2}d\xi \geq \frac {c(\gamma)}{x^{3/2-\gamma}}.
\end{equation}

Using that $c_{\eps,h}$ is solution of \eqref{eq.ch} and estimate \eqref{ineg.tonta} with $\gamma=1/2-s-\eps$ and $x=(h\exp{c_{\eps,h}^2})^{-1}$
we obtain
\begin{align*}
c_{\eps,h}^2&=\int _\rr \frac {(1+\xi^2)\hat \varphi_\eps^2(\xi)}{(1+h\exp(c_{\eps,h}^2)(1+\xi^2))^2}d\xi\\
&=\frac 1{(h\exp(c_{\eps,h}^2))^2}\int _\rr \frac {(1+\xi^2)^{\frac 12-s-\eps}}{\Big((h\exp(c_{\eps,h}^2))^{-1}+(1+\xi^2)\Big)^2}d\xi\\
&\geq \frac 1{(h\exp(c_{\eps,h}^2))^{1-s-\eps}}
\end{align*}
and 
$$h\exp(c_{\eps,h}^2) (c_{\eps,h}^2 )^{1/(1-s-\eps)}-1\geq 0.$$

Applying Step I to the function $q_h=hx^{1/(1-s-\eps)}\exp(x)-1$  we find that
\begin{equation}\label{estm.ch.eps}
    c_{\eps,h}^2\geq |\log h|-\frac 1{1-s-\eps}\log |\log h|+a_1,
\end{equation}
for some constant $a_1$.

This concludes the proof of \eqref{est.c.eps.h}.

We now prove \eqref{est.min.eps.h}. In view of  \eqref{eq.minimizer}  the minimizer $g_{\eps,h}$ 
satisfies
\begin{equation}\label{gepsh}
-\varphi_\eps+g_{\eps,h}+h\exp(\|g_{\eps,h}\|^2_{H^1(\rr)})(-\Delta g_{\eps,h}+g_{\eps,h})=0
\end{equation}
and
\begin{equation}\label{gepsh.explicit}
g_{\eps,h}= \left[I+h \exp(c_{\eps,h}^2)(I-\Delta)\right]^{-1}\varphi_\eps,
\end{equation}
where $c_{\eps,h}=\|g_{\eps,h}\|_{H^1(\rr)}.$

Thus 
\begin{align*}
2J_{h,\varphi_\eps}(g_{\eps,h})&=
\|\varphi_{\eps}-g_{\eps,h}\|_{L^2(\rr)}^2+h\exp(\|g_{\eps,h}\|^2_{H^1(\rr)})\\
&=(h\exp(c_{\eps,h}^2))^2\|(I-\Delta )g_{\eps,h}\|_{L^2(\rr)}^2+h\exp(c_{\eps,h}^2)\\
&=(h\exp(c_{\eps,h}^2))^2\|(I-\Delta ) \left[I+h \exp(c_{\eps,h}^2)(I-\Delta)\right]^{-1}\varphi_\eps\|_{L^2(\rr)}^2+h\exp(c_{\eps,h}^2).
\end{align*}
Writing the last term in Fourier  variable we get
\begin{align*}
2\min _{g\in H^1(\rr)}J_{h,\varphi_\eps}(g)&=(h\exp(c_{\eps,h}^2))^2\int_{\rr}\frac {(1+\xi^2)\hat \varphi_{\eps,h}^2(\xi)}
{\Big(1+h\exp(c_{\eps,h}^2)(1+\xi^2)\Big)^2}+h\exp(c_{\eps,h}^2)\\
&=\int _\rr\frac {(1+\xi^2)^{\frac 32-s-\eps}}{\Big((h\exp(c_{\eps,h}^2))^{-1}+1+\xi^2)\Big)^2} d\xi+h\exp(c_{\eps,h}^2).
\end{align*}
The same arguments as in Step II give us that $h\exp(c_{\eps,h}^2)\rightarrow 0$ as 
$h\rightarrow 0$. Then for small enough $h$, $x_h$ defined by $x_h=(h\exp(c_{\eps,h}^2))^{-1}$
is sufficiently large to apply inequality
\eqref{ineg.tonta} with $\gamma=3/2-s-\eps$. We get
\begin{align*}
2\min _{g\in H^1(\rr)}J_{h,\varphi_\eps}(g)\geq (h\exp(c_{\eps,h}^2))^{s+\eps}+h\exp(c_{\eps,h}^2)\geq (h\exp(c_{\eps,h}^2))^{s+\eps}.
\end{align*}
Using now \eqref{estm.ch.eps} we obtain
$$\min _{g\in H^1(\rr)}J_{h,\varphi_\eps}(g)\gtrsim |\log h|^{-\frac{s+\eps}{1-s-\eps}}$$ which proves \eqref{estimate.below}.

To finish the proof it remains to prove \eqref{ineg.tonta}. For $|x|\rightarrow \infty$, using changes of variables we get:
\begin{align*}
\int _{0}^\infty \frac{(1+\xi^2)^\gamma}{(x+1+\xi ^2)^2}d\xi& \gtrsim \int_{\sqrt 3}^\infty \frac{(1+\xi^2)^\gamma}{x^2+(1+\xi^2)^2}d\xi+O(\frac 1{x^2})\\
&\mathop{=}_{\xi=(\mu^2-1)^{1/2}}\int_2^\infty \frac {\mu^{2\gamma}}{x^2+\mu^4}\frac {\mu }{(\mu^2-1)^{1/2}}d\mu +O(\frac 1{x^2})\\
&\gtrsim \int_2^\infty \frac {\mu^{2\gamma}}{x^2+\mu^4}d\mu +O(\frac 1{x^2})\\
&\mathop{=}_{\mu=x^{1/2}\xi}\frac 1{x^{3/2-\gamma}}\int _{\xi \geq x^{-1/2}} \frac {\xi^{2\gamma}}{1+\xi^4}d\xi+O(\frac 1{x^2})\\
&\gtrsim \frac 1{x^{3/2-\gamma}}+O(\frac 1{x^2})\gtrsim \frac 1{x^{3/2-\gamma}}(1+\frac 1{x^{1/2+\gamma}})\\
&\gtrsim \frac 1{x^{3/2-\gamma}}
\end{align*}
which proves \eqref{ineg.tonta}.
\end{proof}

\begin{proof}[Proof of Theorem \ref{aprox.regular}]
Using the stability result \eqref{est.nse.1} for the  NSE we obtain
\begin{align*}\|u-\tu\|_{L^\infty(0,T;\,L^2(\rr))}&\leq C(T,p,\|\varphi\|_{L^2(\rr)},\|\tvarphi\|_{L^2(\rr)})\|\varphi-\tvarphi\|_{L^2(\rr)}\\
&\leq C(T,p,\|\varphi\|_{L^2(\rr)}) \|\varphi-\tvarphi\|_{L^2(\rr)}.
\end{align*}

Now using the  classical results for smooth initial data presented in Section  \ref{classical.smooth}, by
\eqref{error.smoot} we get
$$\|\th \tu-\tuh
^h\|_{L^\infty(0,T;\,l^2(h\zz))}\leq C h^{1/2}\exp(T\|\tu\|_{L^\infty(0,T;\, H^1(\rr))}^p).$$
Thus
\begin{align*}
\|\th u-\tuh\|_{L^\infty(0,T;\,l^2(h\zz))}&\leq \|\th u-\th \tu\|_{L^\infty(0,T;\,l^2(h\zz))}+
\|\th\tu-\tuh\|_{L^\infty(0,T;\,l^2(h\zz))}\\
&\leq \|u-\tu\|_{L^\infty(0,T;\,L^2(\rr))}+\|\th\tu-\tuh\|_{L^\infty(0,T;\,l^2(h\zz))}\\
&\leq C(T,p,\|\varphi\|_{L^2(\rr)}) \|\varphi-\tvarphi\|_{L^2(\rr)}+h^{1/2}\exp(T\|\tu\|_{L^\infty(0,T;\, H^1(\rr))}^p).\end{align*}
This yields \eqref{est.120}.
\end{proof}

\section{Technical Lemmas}\label{putos.lemas}
\setcounter{equation}{0}

In this section we prove some technical results that have been used along the paper. The main aim of this section
 is to obtain estimates on the difference $f(\th u)-\th f(u)$ in auxiliary norms $L^q(I,l^r(h\zz))$.

  In the case of smooth enough functions
 $u$,  the pointwise projection operator
 \begin{equation}\label{defe}
    (\eh u)(jh)=u(jh)
 \end{equation}
makes sense. More precisely it is well defined in $H^s(\rr)$, $s>1/2$.
 In these cases the use of the operator $\eh$ has the advantage of commuting with the nonlinearity $f(\eh u)=\eh f(u)$.

The key ingredient is the following Theorem.
\begin{teo}(Marcinkiewicz multiplier theorem \cite{grafakos}, Th.~5.2.2, p.356)\label{graf}
 Let $m:\rr\rightarrow \rr$ be a bounded function which is $C^1$ in every dyadic set $(2^j,2^{j+1})\cup
(-2^{j+1},-2^{j})$ for $j\in \zz$. Assume that the derivative $m'$ of $m$ satisfies
\begin{equation}\label{mark}
    \sup_{j\in \zz}\left[\int_{-2^{j+1}}^{-2^j}|m'(\xi)|d\xi+\int_{2^{j}}^{2^{j+1}}|m'(\xi)|d\xi\right]\leq A<\infty.
\end{equation}
Then there exists a positive constant $C$ such that for all
 $1< q< \infty$ the following holds:
$$\| (\hat f m)^\vee \|_{L^q(\rr)}\leq C\max\{q,(q-1)^{-1}\}^6(A+\|m\|_{L^\infty(\rr)})\|f\|_{L^q(\rr)}.$$
\end{teo}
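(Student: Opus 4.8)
The closing statement is the classical one–dimensional Marcinkiewicz multiplier theorem, so the plan is to run the standard Littlewood--Paley argument, taking care to keep the dependence of the constant on $\max\{q,(q-1)^{-1}\}$ polynomial. Throughout, let $I_j$, $j\in\zz$, be the rough dyadic blocks $\pm[2^{k},2^{k+1})$, which partition $\rr\setminus\{0\}$, let $P_{I_j}$ denote the sharp Fourier projection onto $I_j$, and write $T_\sigma f=(\hat f\,\sigma)^\vee$ for the multiplier operator with symbol $\sigma$.

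The first step is the two–sided rough Littlewood--Paley inequality: for $1<q<\infty$,
$$\Big\|\Big(\sum_j|P_{I_j}f|^2\Big)^{1/2}\Big\|_{L^q(\rr)}\simeq\|f\|_{L^q(\rr)},$$
with both implied constants bounded by a fixed power of $\max\{q,(q-1)^{-1}\}$. I would obtain this by Rademacher randomization and Khintchine's inequality, using that $\sum_j\eps_jP_{I_j}$ is, modulo two modulations, a linear combination of one–sided Hilbert transforms, together with the quantitative $L^q$ bound for the Hilbert transform. Since $P_{I_j}T_m=T_{m\mathbf 1_{I_j}}P_{I_j}$ at the level of symbols, this inequality reduces the theorem to the vector–valued estimate
$$\Big\|\Big(\sum_j|T_{m\mathbf 1_{I_j}}g_j|^2\Big)^{1/2}\Big\|_{L^q}\le C\max\{q,(q-1)^{-1}\}^{c}(A+\|m\|_{L^\infty})\Big\|\Big(\sum_j|g_j|^2\Big)^{1/2}\Big\|_{L^q},$$
applied with $g_j=P_{I_j}f$.

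To prove this estimate I would write each block multiplier as a superposition of sharp cut–offs: on $I_j=[a_j,b_j)$,
$$m(\xi)\mathbf 1_{I_j}(\xi)=m(a_j)\,\mathbf 1_{I_j}(\xi)+\int_{a_j}^{b_j}m'(t)\,\mathbf 1_{[t,b_j)}(\xi)\,dt,$$
which is legitimate since $m'\in L^1(I_j)$, and the representing measure has total mass at most $\|m\|_{L^\infty}+A$ on each block. The term $m(a_j)P_{I_j}g_j$ is controlled by the $\ell^2$–valued boundedness of arbitrary families of sharp interval projections, which follows from the $\ell^2$–valued Calderón--Zygmund bound for the Hilbert transform (modulations being isometries that do not affect the $\ell^2$ structure), yielding a bound by $\|m\|_{L^\infty}\|(\sum_j|g_j|^2)^{1/2}\|_{L^q}$ up to the usual constant. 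For the integral term I would apply Cauchy--Schwarz in $t$ against $|m'(t)|\,dt$ to get $A^{1/2}\big(\int_{I_j}|m'(t)|\,|P_{[t,b_j)}g_j|^2\,dt\big)^{1/2}$ on each block, then sum in $j$, take the $L^{q/2}$ norm of the resulting square function, pull $\int|m'(t)|\,dt\le A$ outside by Minkowski's integral inequality, and invoke the $\ell^2$–valued projection bound once more. Adding the two contributions and combining with the reduction step gives the asserted inequality with the sixth power of $\max\{q,(q-1)^{-1}\}$.

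The main obstacle is the quantitative harmonic analysis input: proving the rough Littlewood--Paley equivalence and the $\ell^2$–valued boundedness of families of sharp interval projections with constants polynomial in $\max\{q,(q-1)^{-1}\}$, which demands careful bookkeeping through the Rademacher/Khintchine reduction and the vector–valued Hilbert transform bound. Once those ingredients are in hand, the block decomposition of $m$ and the two Cauchy--Schwarz/Minkowski manipulations are routine.
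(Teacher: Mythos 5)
This statement is not proved in the paper at all: it is quoted as a known result from the cited reference (Grafakos, Th.~5.2.2) and used as a black box in Section~\ref{putos.lemas}. Your sketch reconstructs the standard Littlewood--Paley proof from that very reference (rough dyadic projections with polynomial-in-$\max\{q,(q-1)^{-1}\}$ constants, the decomposition $m\mathbf 1_{I_j}=m(a_j)\mathbf 1_{I_j}+\int_{I_j}m'(t)\mathbf 1_{[t,b_j)}\,dt$, Cauchy--Schwarz against $|m'(t)|\,dt$, and the $\ell^2$-valued bound for sharp interval projections), and it is essentially correct as an outline of that argument.
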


\begin{remark}Using a change of variables in the Fourier space the above
 dyadic intervals can be replaced by any other one of the form $(c^j,c^{j+1})\cup
(-c^{j+1},-c^{j})$, $j\in \zz$ and $c>1$. In the following applications, the constant $c$ will be chosen to be $c=\pi$.
\end{remark}

For any function $u\in L^2(\rr)$ we define the new function $\tuh$ by truncating
the Fourier transform as follows:
\begin{equation}\label{utilde}
\what\tuh(\xi)= \what u(\xi)\,{\bf{1}}_{(-\pi/h,\,\pi/h)}(\xi).
\end{equation}
For $h=1$, Theorem \ref{graf}, applied with $m(\xi)={\bf 1}_{(-\pi,\pi)}$ which is $C^1$ in every dyadic interval,  shows that for any $1<q<\infty$, the $L^q(\rr)$-norm of
$\tilde u_1$ can be controlled by the one
of $u$:
\begin{equation}\label{est.lp}
\|\tilde u_1\|_{L^{q}(\rr)}\leq C(q)\|u\|_{L^{q}(\rr)}.
\end{equation}
A scaling argument shows us that the above inequality also holds for all $h>0$ with a constant $C(q)$
independent of $h$.


Using Theorem \ref{graf}  we can refine this estimate  as follows:
\begin{lemma}\label{lemma.1}
For any  $s\geq 0$ and $q\in (1,\infty)$ the following hold. \\
a)
There exists a positive constant $c(s,q)$ such that
\begin{equation}\label{est.1}
    \|u-\tuh\|_{L^q(\rr)}\leq c(s,q)h^s \|u\|_{\dot W^{s,q}(\rr)}
\end{equation}
holds for all $u\in \dot W^{s,q}(\rr)$ and $h>0$.\\
b)
Assuming  $s\in [0,1]$, there exists a positive constant $c(s,q)$ such that
\begin{equation}\label{est.2}
    h\|\tuh\|_{\dot W^{1,q}(\rr)}\leq c(s,q)h^s \|u\|_{\dot W^{s,q}(\rr)}
\end{equation}
holds for all $u\in \dot W^{s,q}(\rr)$ and $h>0$.
\end{lemma}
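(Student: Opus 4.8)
The plan is to reduce both inequalities to the special case $h=1$ by a dilation argument — exactly as the paper already does to pass from \eqref{est.lp} at $h=1$ to all $h>0$ — and then to realize each of the two operators involved as a Fourier multiplier to which the Marcinkiewicz multiplier theorem (Theorem~\ref{graf}) applies.

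\textbf{Step 1: reduction to $h=1$.} For $h>0$ set $v=u(h\,\cdot\,)$. From $\hat v(\xi)=h^{-1}\hat u(\xi/h)$ one checks the identities $(\tuh)(h\,\cdot\,)=\tilde v_1$ and $(u-\tuh)(h\,\cdot\,)=v-\tilde v_1$ (the truncations being those of \eqref{utilde}, at scales $h$ and $1$ respectively), together with the scalings $\|w(h\,\cdot\,)\|_{L^q(\rr)}=h^{-1/q}\|w\|_{L^q(\rr)}$ and $\|w(h\,\cdot\,)\|_{\dot W^{\sigma,q}(\rr)}=h^{\sigma-1/q}\|w\|_{\dot W^{\sigma,q}(\rr)}$; in particular $\|v\|_{\dot W^{s,q}(\rr)}=h^{s-1/q}\|u\|_{\dot W^{s,q}(\rr)}$. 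Inserting these into \eqref{est.1} and \eqref{est.2}, the weights $h^{-1/q}$ cancel and the remaining powers of $h$ match, so it suffices to prove the $h=1$ statements
\begin{equation*}
\|u-\tilde u_1\|_{L^q(\rr)}\le c(s,q)\|u\|_{\dot W^{s,q}(\rr)},\qquad
\|\tilde u_1\|_{\dot W^{1,q}(\rr)}\le c(s,q)\|u\|_{\dot W^{s,q}(\rr)}\quad(0\le s\le 1).
\end{equation*}

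\textbf{Step 2: multiplier estimates at $h=1$.} Let $g$ be given by $\hat g(\xi)=|\xi|^s\hat u(\xi)$, so $\|g\|_{L^q(\rr)}=\|u\|_{\dot W^{s,q}(\rr)}$. By \eqref{utilde} one has $\widehat{u-\tilde u_1}=\hat u\,\mathbf 1_{\{|\xi|\ge\pi\}}$ and $\widehat{(-\Delta)^{1/2}\tilde u_1}=|\xi|\,\hat u\,\mathbf 1_{\{|\xi|\le\pi\}}$, hence $u-\tilde u_1=(m_a\hat g)^\vee$ and $(-\Delta)^{1/2}\tilde u_1=(m_b\hat g)^\vee$ with
\begin{equation*}
m_a(\xi)=|\xi|^{-s}\,\mathbf 1_{\{|\xi|\ge\pi\}},\qquad m_b(\xi)=|\xi|^{1-s}\,\mathbf 1_{\{|\xi|\le\pi\}}.
\end{equation*}
Both are bounded, with $\|m_a\|_{L^\infty}\le\pi^{-s}$ and $\|m_b\|_{L^\infty}\le\pi^{1-s}$ (here $s\le 1$ is used), and both are $C^1$ on every interval $(\pi^{j},\pi^{j+1})\cup(-\pi^{j+1},-\pi^{j})$: the sole discontinuity, at $\xi=\pm\pi$, lies at an \emph{endpoint} of such a block rather than in its interior — which is exactly why the ratio $c=\pi$ is chosen in the Remark following Theorem~\ref{graf}. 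A direct computation gives $\int_{\pi^{j}}^{\pi^{j+1}}|m_a'(\xi)|\,d\xi\le\pi^{-s}$ and $\int_{\pi^{j}}^{\pi^{j+1}}|m_b'(\xi)|\,d\xi\le\pi^{1-s}$ uniformly in $j$ (the latter integral being finite near the origin precisely because $s\le 1$), so \eqref{mark} holds. Theorem~\ref{graf} then yields $\|u-\tilde u_1\|_{L^q}\le c(s,q)\|g\|_{L^q}$ and $\|\tilde u_1\|_{\dot W^{1,q}}=\|(-\Delta)^{1/2}\tilde u_1\|_{L^q}\le c(s,q)\|g\|_{L^q}$, i.e. the required $h=1$ bounds.

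\textbf{Main difficulty.} There is essentially a single delicate point, flagged above: the sharp cutoff $\mathbf 1_{\{|\xi|\le\pi\}}$ must not fall inside a Littlewood--Paley block, or the $C^1$ hypothesis of the Marcinkiewicz theorem fails; taking the block ratio equal to $\pi$ places the jump exactly at a block endpoint and resolves this. The rest is routine — the scaling identities of Step~1 and the elementary $L^\infty$- and $L^1$-derivative bounds on $m_a,m_b$ — and the restriction $s\le 1$ in part~b) enters only where $|\xi|^{1-s}$ and $|\xi|^{-s}$ must be controlled on $\{|\xi|\le\pi\}$.
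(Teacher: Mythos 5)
Your argument is correct and is essentially the paper's own proof: the same dilation reduces both estimates to $h=1$, and the same two multipliers $|\xi|^{-s}\mathbf 1_{\{|\xi|\ge\pi\}}$ and $|\xi|^{1-s}\mathbf 1_{\{|\xi|\le\pi\}}$ (acting on $(|\xi|^s\hat u)^\vee$) are handled by Theorem \ref{graf}, with the blocks of ratio $c=\pi$ from the remark following that theorem taking care of the jump at $\pm\pi$, and $s\le 1$ entering only through $m_b$ exactly as in the paper. No gaps; only a cosmetic slip in attaching the ``$s\le1$'' remark to the bound on $m_a$, which in fact holds for all $s\ge0$.
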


\begin{proof}[Proof of Lemma \ref{lemma.1}]
We divide the proof in two steps  corresponding to \eqref{est.1} and \eqref{est.2}.

{\bf Step I. Proof of \eqref{est.1}.}
Let us consider the following operator
$$M_h u:=u-\tilde u_h=( {\bf 1}_{\{|\xi|\geq \pi/h\}}\hat u)^\vee.$$
A change of variables gives us that
$$(M_h u)(x)=M_1(u(h\cdot ))\Big(\frac xh\Big).$$
Using this property the following identities hold:
$$\|M_hu\|_{L^q(\rr)}=h^{1/q}\|M_1(u(h\cdot))\|_{L^q(\rr)}$$
and
$$\|u(h\cdot)\|_{\dot W^{s,q}(\rr)}=\||\nabla|^s[u(h\cdot)]\|_{L^q(\rr)}=h^s\|(|\nabla|^s u)(h\cdot)\|_{L^q(\rr)}=
h^sh^{1/q}\||\nabla|^su\|_{L^q(\rr)}.$$
Thus, it is sufficient to consider the case $h=1$ and to prove that
\begin{equation}\label{est.4}
    \|M_1v\|_{L^q(\rr)}\leq c(s,q)\||\nabla|^s v\|_{L^q(\rr)}
\end{equation}
holds for all $v\in \dot W^{s,q}(\rr)$.

With the notation  $$m_s(\xi):={|\xi|^{-s}}{\bf 1}_{\{|\xi|\geq \pi\}}(\xi),$$ estimate \eqref{est.4} holds if
$m_s(\xi)$ satisfies the hypothesis of Theorem \ref{graf}. Using that $m_s(\xi)\in L^\infty(\rr)$  and that
$$|m'_s(\xi)|\leq \frac{c(s)}{|\xi|^{s+1}}{\bf 1}_{\{|\xi|\geq \pi\}}(\xi),\, \xi\in \rr,$$
by Theorem \ref{graf} we obtain \eqref{est.4}.

\medskip
{\bf Step II. Proof of \eqref{est.2}.}
A similar argument as in the previous case reduces estimate \eqref{est.2} to the case $h=1$:
$$\|(\what u (\xi){\bf 1}_{(-\pi,\pi)}|\xi|)^\vee\|_{L^q(\rr)}\leq c(s,q)
\|(\what u(\xi) |\xi|^s)^\vee\|_{L^q(\rr)}.
$$
Denoting $v=(\what u(\xi) |\xi|^s)^\vee$, it remains to prove that
\begin{equation}\label{est.6}
    \|(\hat{v}(\xi){\bf 1}_{(-\pi,\pi)}|\xi|^{1-s})^\vee\|_{L^q(\rr)}\leq c(s,q)\|v\|_{L^q(\rr)}.
\end{equation}
In other words, it is sufficient to apply Theorem \ref{graf} to the
 multiplier $m_s(\xi)$ given by
$$m_s(\xi)=|\xi|^{1-s}{\bf 1}_{(-\pi,\pi)}(\xi).$$
Using that $m_s(\xi)\in L^\infty(\rr)$  
satisfies
$$|m_s'(\xi)|\leq c(s)|\xi|^{-s}{\bf 1}_{(-\pi,\pi)}(\xi), \, \xi\in \rr\setminus \{0\},$$
 we fit in the hypothesis of Theorem  \ref{graf} and then \eqref{est.6} holds. This finishes the proof.
\end{proof}

In the following we obtain error estimates for the difference between the two interpolators  $\th $  and $\eh $ when applied to
functions $u$ and $f(u)$, where $\th $  and $\eh $ are defined by
\eqref{def.th} and \eqref{defe} respectively.

\begin{lemma}\label{lemma.3}
Let $s>1/2$ and $q\in (1, \infty)$. Then there exists a positive constant $c(s,q)$ such that
\begin{equation}\label{est.3}
    \|\th u-\eh u\|_{l^q(h\zz)}\leq c(s,q)h^s \|u\|_{\dot W^{s,q}(\rr)}
\end{equation}
holds for all $u\in W^{s,q}(\rr)$ and $h>0$.
\end{lemma}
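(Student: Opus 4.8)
The plan is to mirror the proof of Lemma \ref{lemma.1}: a scaling argument reducing everything to $h=1$, a dyadic Littlewood--Paley decomposition combined with the Marcinkiewicz multiplier theorem (Theorem \ref{graf}), and the Plancherel--Polya inequality \eqref{norm.equiv} to move between $l^q(h\zz)$ and $L^q(\rr)$. First, note that by the definitions \eqref{def.th}, \eqref{defe} and \eqref{utilde} one has $\th u=\eh\tuh$, whence $\th u-\eh u=-\eh(u-\tuh)=-\eh(M_h u)$ with $M_h u=(\mathbf 1_{\{|\xi|\ge\pi/h\}}\hat u)^\vee$, exactly the operator appearing in the proof of Lemma \ref{lemma.1}. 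The same change of variables $w(x)=u(hx)$ used there gives $(\th u-\eh u)(jh)=(\mathbf{T}_1 w-\mathbf{E}_1 w)(j)$, together with $\|\th u-\eh u\|_{l^q(h\zz)}=h^{1/q}\|\mathbf{T}_1 w-\mathbf{E}_1 w\|_{l^q(\zz)}$ and $h^{s}\|u\|_{\dot W^{s,q}(\rr)}=h^{1/q}\|w\|_{\dot W^{s,q}(\rr)}$; hence it suffices to prove
$$\|\mathbf{E}_1(M_1 w)\|_{l^q(\zz)}\le c(s,q)\,\|w\|_{\dot W^{s,q}(\rr)},\qquad s>1/2 .$$

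For $q\ge 2$ I would argue directly. Since $M_1 w$ has Fourier transform supported in $\{|\xi|\ge\pi\}$, write $M_1 w=\sum_{k\ge 1}P_k(M_1 w)$ with $P_k$ the projectors \eqref{cutoff}; for all but the finitely many $k$ for which the support of $\eta_k$ meets $|\xi|=\pi$ one has $P_k(M_1 w)=P_k w$, and those borderline terms give fixed bounded Fourier multipliers applied to $w$, hence are harmless. Each $P_k w$ is band limited to $\{|\xi|\lesssim 2^k\}$, so realizing $\zz$ as a sublattice of the refined lattice $2^{-k}\zz$ and applying \eqref{norm.equiv} on the latter yields the oversampling bound $\|(P_k w)(j)\|_{l^q(\zz)}\lesssim 2^{k/q}\|P_k w\|_{L^q(\rr)}$. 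On the other hand, since $\widehat{P_k w}=\eta_k(\xi)|\xi|^{-s}\,\widehat{|\nabla|^s w}$, Theorem \ref{graf} applied to the multiplier $2^{ks}\eta_k(\xi)|\xi|^{-s}$ (which is bounded in $L^\infty$ with Marcinkiewicz constant uniform in $k$, being supported in an annulus $|\xi|\simeq 2^k$) gives $\|P_k w\|_{L^q(\rr)}\lesssim c(s,q)\,2^{-ks}\|w\|_{\dot W^{s,q}(\rr)}$. Summing,
$$\|\mathbf{E}_1(M_1 w)\|_{l^q(\zz)}\le\sum_{k\ge 1}\|(P_k w)(j)\|_{l^q(\zz)}\lesssim c(s,q)\Big(\sum_{k\ge 1}2^{k(1/q-s)}\Big)\|w\|_{\dot W^{s,q}(\rr)},$$
and the geometric series converges because $s>1/2\ge 1/q$. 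Rescaling back then gives \eqref{est.3}.

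For the remaining range $1<q<2$ I would pass to the adjoint. The map $w\mapsto \mathbf{E}_1(M_1 w)$, composed with band-limited interpolation of the resulting samples, is on the Fourier side a weighted periodization operator $v\mapsto\big(\mathbf 1_{[-\pi,\pi)}(\xi)\sum_{m\ne 0}|\xi+2\pi m|^{-s}\hat v(\xi+2\pi m)\big)^\vee$ with $v=|\nabla|^s w$; its $L^q\!\to\!L^q$ boundedness is equivalent to the $L^{q'}\!\to\!L^{q'}$ boundedness of an operator of the same type, to which the $q'\ge 2$ analysis above applies, the convergence condition now reading $s>1/q'$, which is again implied by $s>1/2$.

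The delicate point is precisely this uniformity of the threshold: the triangle inequality over dyadic blocks is lossy and costs the factor $2^{k/q}$, so it only closes for $s>1/q$; getting the clean condition $s>1/2$ valid for every $q\in(1,\infty)$ forces one through the duality step for $q<2$ (equivalently, an $L^2$-orthogonality input hidden in the $q'\ge2$ estimate), and one must check that the constants produced by Theorem \ref{graf} for the annular multipliers $2^{ks}\eta_k|\xi|^{-s}$ really are bounded uniformly in the dyadic scale $k$.
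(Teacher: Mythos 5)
Your reduction to $h=1$ and your argument for $q\ge 2$ are correct, and they follow a genuinely different route from the paper: the paper first proves the case $s>1$ (for every $q\in(1,\infty)$) by periodizing the frequencies $|\xi|\ge\pi$, applying Minkowski's inequality over the integer translates and Theorem \ref{graf} to each multiplier $|\xi-2\pi l|^{-s}\mathbf 1_{(-\pi,\pi)}$, which costs the summability of $|l|^{-s}$ and hence forces $s>1$, and then interpolates with the $q=2$, $s>1/2$ Plancherel estimate. Your dyadic decomposition, with the oversampling bound $\|P_k w\|_{l^q(\zz)}\lesssim 2^{k/q}\|P_k w\|_{L^q(\rr)}$ (legitimate, since $\zz\subset 2^{-k}\zz$ and $2^{k+1}\le \pi 2^k$) and the uniform Marcinkiewicz constants of $2^{ks}\eta_k(\xi)|\xi|^{-s}$, gives the estimate directly whenever $s>1/q$, which for $q\ge2$ is even weaker than $s>1/2$; this part is fine and sharper than the paper's.

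The genuine gap is the duality step for $1<q<2$. The adjoint of the weighted periodization operator is not ``of the same type'': it is the unfolding operator $g\mapsto\sum_{m\ne 0}e^{2\pi i m x}\bigl(\mathbf 1_{(-\pi,\pi)}(\xi)\,|\xi+2\pi m|^{-s}\hat g(\xi)\bigr)^\vee$, to which your sampling analysis does not apply; and since passing to adjoints preserves the norm of each dyadic block, any blockwise estimate on the dual side reproduces the per-block operator norm $2^{k(1/q-s)}$ and hence the same threshold $s>1/q$, not the claimed $s>1/q'$. In fact no argument can close this case, because for $1<q<2$ and $1/2<s<1/q$ the inequality \eqref{est.3} is false: take $h=1$, $\phi$ Schwartz with $\hat\phi$ supported in $[-1/2,1/2]$ and $\phi(0)\ne 0$, and $u_N(x)=e^{iNx}\phi(Nx)$ with $N$ large. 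Then $\hat u_N$ is supported in $[N/2,3N/2]$, so $\mathbf{T}_1u_N\equiv 0$ while $(\mathbf{E}_1u_N)(0)=\phi(0)$, whence the left-hand side is bounded below by $|\phi(0)|$; on the other hand $\|u_N\|_{\dot W^{s,q}(\rr)}\lesssim N^{s}\|u_N\|_{L^q(\rr)}\simeq N^{s-1/q}\to 0$ (apply Theorem \ref{graf} to the multiplier $(|\xi|/N)^s$ smoothly cut off around the support of $\hat u_N$). So the condition $s>1/q$ produced by your direct argument is essentially sharp, your method proves the lemma exactly in that range for every $q\in(1,\infty)$, and the clean threshold $s>1/2$ of the statement is unreachable for $q<2$. (For comparison, the paper's own interpolation between $(s_1>1,q_1)$ and $(s_2>1/2,q=2)$ requires $\theta>|2/q-1|$ and $s>(1+\theta)/2$, so it too only reaches $s>\max\{1/q,1/q'\}$; the only later use of the lemma, in the proof of Lemma \ref{lemma.non}, is with $s=1$, which both arguments cover.)
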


\begin{remark}
This lemma generalizes Theorem 10.1.3 of  \cite{MR1005330}, p. 205, which addresses the case $q=2$, $s>1/2$. In this case
using  Plancherel's identity in the discrete setting it is easy to obtain
\begin{equation}\label{est.l2}
    \|\th u-\eh u\|_{l^2(h\zz)}\leq c(s)h^s \|u\|_{\dot H^{s}(\rr)}.
\end{equation}
\end{remark}

\begin{remark}
Using the above results,  we will be able to obtain estimates of the difference $\th f(u)-f(\th u)$, $f(u)=|u|^pu$, $p\geq 0$,
given by Lemma \ref{lemma.non}.
\end{remark}

\begin{proof}[Proof of Lemma \ref{lemma.3}]
Estimate \eqref{est.l2} provides the desired estimate $\dot W^{s,2}(\rr)\rightarrow l^2(h\zz)$ in  the case $q=2$.
We will also prove the estimate
$\dot W^{s,q}\rightarrow l^q(h\zz)$ in the case $s>1$. Using these two estimates  the general case will be a consequence of an
interpolation argument.

{\bf Case 1: $s>1$, $q\in (1,\infty)$.}
We claim that
\begin{equation}\label{est.23}
    \|\th u-\eh u\|_{l^p(h\zz)}\leq c(p,s)h^s  \| |\nabla|^s u\|_{L^p(\rr)}.
\end{equation}
By rescaling all the above quantities we can assume $h=1$.

We have the following:
$$(\mathbf{T}_1 u-\mathbf{E}_1 u) (j)=\int _{|\xi|\geq \pi} e^{ij\xi}\hat u(\xi)=\int_{-\pi}^{\pi}
e^{ij\xi} \sum _{l\neq 0} \hat u(\xi+2\pi l).$$
Denoting by $v$ the function whose Fourier transform is given by
\begin{equation}\label{defv}
\hat v(\xi)={\bf 1}_{(-\pi,\pi)}\sum _{l\neq 0} \hat u(\xi+2\pi l),
\end{equation}
we get
$$(\mathbf{T}_1 u-\mathbf{E}_1 u) (j)=\int_{-\pi}^{\pi}e^{ij\xi}\hat v(\xi) d\xi.$$
Classical results on band-limited functions
(see Plancherel-Polya \cite{0018.15204}) give us that
$$\|\mathbf{T}_1 u-\mathbf{E}_1 u\|_{l^p(\zz)}\leq \|v\|_{L^p(\rr)},$$
provided that the right hand side term of the above inequality makes sense. It is then sufficient to prove that the
function $v$ defined by \eqref{defv} satisfies:
 \begin{equation}\label{newestimateonv}
    \|v\|_{L^p(\rr)}\leq c(p,s)\| |\nabla|^s u\|_{L^p(\rr)}.
 \end{equation}

Using the properties of the Fourier transform we get:
$$v(x)=\sum _{l\neq 0}e^{2i\pi lx  } ( {\bf {1}}_{((2l-1)\pi,(2l+1)\pi)}\hat u)^\vee.$$
It is sufficient to prove that
$$\left\|\sum _{l\neq 0}e^{2i\pi lx  } ( {\bf {1}}_{((2l-1)\pi,(2l+1)\pi)}\hat u)^\vee\right\|_{L^p(\rr)}\leq
\||\nabla|^s u |\|_{L^p(\rr)}$$
or equivalently
$$\left\|\sum _{l\neq 0}e^{2i\pi lx  } ( |\xi|^{-s}{\bf {1}}_{((2l-1)\pi,(2l+1)\pi)}\hat u )^\vee\right\|_{L^p(\rr)}\leq
\| u\|_{L^p(\rr)}.$$
Minkowsky's inequality gives us
\begin{align*}
\left\|\sum _{l\neq 0}e^{2i\pi lx  } ( |\xi|^{-s}{\bf {1}}_{((2l-1)\pi,(2l+1)\pi)}\hat u )^\vee\right\|_{L^p(\rr)}\leq
\sum _{l\neq 0}\left\|(  |\xi|^{-s}{\bf {1}}_{((2l-1)\pi,(2l+1)\pi)}\hat u)^\vee\right\|_{L^p(\rr)}.
\end{align*}
We claim that for any $l\neq 0$:
\begin{equation}\label{claim1}
\left\|(  |\xi|^{-s}{\bf {1}}_{((2l-1)\pi,(2l+1)\pi)}\hat u)^\vee\right\|_{L^p}\leq
\frac {c(s)}{|l|^s}\|u\|_{L^p(\rr)}.
\end{equation}
Thus, summing all the above inequalities for $l\neq 0$ we obtain the desired estimate.

A translation in \eqref{claim1} reduces its proof to show that $m_{s,l}$, defined by
$$m_{s,l}(\xi)=|\xi-2l\pi|^{-s}{\bf {1}}_{(-\pi,\pi)}(\xi), \, l\neq 0,$$ verify the hypothesis of
Proposition \ref{graf}.
Observe that
$$|m_{s,l}(\xi)|\leq  \frac {c(s)}{|l|^s},\, \xi\in \rr, l\neq 0$$
and
$$|m'_{s,l}( \xi)|\leq \frac {c(s)}{|l|^{s}|\xi|}{\bf {1}}_{(-\pi,\pi)}(\xi),\, \xi\in \rr\setminus\{0\}, l\neq 0.$$
Applying Proposition \ref{graf} to each multiplier  $m_{s,l}$ we get \eqref{claim1} and the proof of this case is finished.

\medskip
{\bf Case 2: $s>1/2$, $q\in (1,\infty)$.} We set $U_h=\th-\eh$.
Using the estimates of the previous case  we deduce that the operator $U_h$ satisfies:
$$U_h:\dot W^{s_1,q_1}(\rr)\rightarrow l^{q_1}(h\zz),\, s_1>1,\, 1<q_1<\infty,$$
and by \eqref{est.l2}:
$$U_h:\dot W^{s_2,2}(\rr)\rightarrow l^2(h\zz),\, s_2>1/2.$$
Then for any $\theta\in (0,1)$,
$$U_h:[\dot W^{s_1,q_1}(\rr), \dot W^{s_2,2}(\rr)]_{[\theta]}\rightarrow [l^{q_1}(h\zz),l^2(h\zz)]_{[\theta]}$$
with a norm that satisfies:
$$ \|U_h\|_{[\dot W^{s_1,q_1}(\rr), \dot W^{s_2,2}(\rr)]_{[\theta]}- [l^{q_1}(h\zz),\,l^2(h\zz)]_{[\theta]}}
 \leq \|U_h\|_{\dot W^{s_1,q_1}(\rr)-l^{q_1}(h\zz)}^\theta \|U_h\|_{\dot W^{s_2,2}(\rr)-l^2(h\zz)}^{1-\theta}.$$
Classical results on interpolation theory (\cite{0344.46071} ,Th.~6.4.5, p.~153) give us that
$$[\dot W^{s_1,q_1}(\rr), \dot W^{s_2,2}(\rr)]_{[\theta]}=\dot W^{s,q}(\rr)$$
and
$$[l^{q_1}(h\zz),l^2(h\zz)]_{[\theta]}=l^{q}(h\zz)$$
where $s$ and $q$ are given by
\begin{equation}\label{rel.sq}
    \left\{ \begin{array}{l}
    s=f_\theta(s_1,s_2)=s_1\theta+s_2(1-\theta),\\[8pt]
\displaystyle    \frac 1q=g_\theta(q_1)=\frac \theta {q_1}+\frac{1-\theta}{2}.
    \end{array}
    \right.
\end{equation}
Using that the ranks of functions $f_\theta$ and $g_\theta$ satisfy
$$Im(f_\theta)=\Big(\frac{1+\theta}2,\infty\Big),\, Im(g_\theta)=\Big(\frac{1-\theta}2,\frac {1+\theta}2\Big),$$
we obtain that for any $s>1/2$ and $0<q<1$ we can find $s_1>1$, $s_2>1/2$, $q_1>1$ and $\theta \in (0,1)$ such that
\eqref{rel.sq} holds and
\begin{equation}\label{est.fin}
    \|A_h\|_{\dot W^{s,q}(\rr)-l^q(h\zz)}\leq h^{s_1\theta} h^{s_2(1-\theta)}\leq h^s.
\end{equation}The proof is now finished.
\end{proof}

\begin{proof}[Proof of Lemma \ref{lemma.non}]
We first recall that the following inequality holds for all $u,v\in L^{p+2}(\rr)$:
\begin{equation}\label{ineg.1}
\|f(u)-f(v)\|_{L^{(p+2)'}(\rr)}\leq C(p)(\|u\|_{L^{p+2}}^p+\|v\|_{L^{p+2}}^p)\|u-v\|_{L^{p+2}(\rr)}.
\end{equation}
We set $\tilde u_h$ defined by $\what\tuh(\xi)= \what u(\xi)\,{\bf{1}}_{(-\pi/h,\,\pi/h)}(\xi)$.
The difference $\th f(u)-f(\th u)$  in \eqref{inter.1} satisfies:
\begin{equation*}\label{inter.2}
    \|\th f(u)-f(\th u)\|_{l^{(p+2)'}(h\zz)}\leq \|\th f(u)-\th f(\tuh)\|_{l^{(p+2)'}(h\zz)}+
    \|\th f(\tuh)-f(\th u)\|_{l^{(p+2)'}(h\zz)}.
\end{equation*}
Using \eqref{ineg.1}, \eqref{est.lp} and Lemma \ref{lemma.1}, the first term in the right hand side satisfies
\begin{eqnarray*}
  \|\th f(u)-\th f(\tuh)\|_{l^{(p+2)'}(h\zz)} &\leq & c(p)\|f(u)-f(\tuh)\|_{L^{(p+2)'}(\rr)} \\
   &\leq & c(p)(\|u\|_{L^{p+2}(\rr)}^{p}+\|\tuh\|_{L^{p+2}(\rr)}^{p})\|u-\tuh\| _{L^{p+2}(\rr)}\\
   &\leq & c(p)h^s\|u\|_{L^{p+2}(\rr)}^{p}  \|u\|_{\dot W^{s,p+2}(\rr)}\leq c(p) h^s \|u\|_{ W^{s,p+2}(\rr)}^{p+1}.
\end{eqnarray*}
For the second term, using that on the grid $h\zz$, $\th u=\eh \tuh$, by Lemma \eqref{lemma.3} we get:
\begin{eqnarray}
  \label{est.20}  \|\th f(\tuh)-f(\th u)\|_{l^{(p+2)'}(h\zz)}&=&  \|\th f(\tuh)-f(\eh \tuh)\|_{l^{(p+2)'}(h\zz)}\\
  \nonumber &=&   \|\th f(\tuh)-\eh  f(\tuh)\|_{l^{(p+2)'}(h\zz)}\\
 \nonumber &\leq &h \|f(\tuh)\|_{\dot W^{1,(p+2)'}(\rr)}\leq h \|\tuh ^p \partial _x\tuh \|_{L^{(p+2)'}(\rr)}.
\end{eqnarray}

Using that $s\in [0,1]$ we apply Young's inequality and \eqref{est.2} to obtain:
\begin{align}
 \label{est.21}\|\tuh ^p \partial _x\tuh \|_{L^{(p+2)/(p+1)}(\rr)}&= \Big(\int _{\rr}
|\tuh|^{p(p+2)/(p+1)}|\partial_x \tuh|^{(p+2)/(p+1)}\Big)^{(p+1)/(p+2)}\\
 \nonumber&\leq \Big(\||\tuh|^{p(p+2)/(p+1)}\|_{(p+1)/p}\||\partial_x \tuh|^{(p+2)/(p+1)}\|_{p+1}\Big)^{(p+1)/(p+2)}\\
 \nonumber&= \|\tuh\|_{L^{p+2}(\rr)}^{p}\|\partial_x \tuh\|_{L^{p+2}(\rr)}\leq \|u\|_{L^{p+2}(\rr)}^{p} \|\tuh\|_{\dot
W^{1,p+2}}\\
\nonumber&\lesssim \|u\|_{L^{p+2}(\rr)}^{p}h^{s-1} \|u\|_{\dot W^{s,p+2}(\rr)}\leq
h^{s-1} \|u\|_{W^{s,p+2}(\rr)}^{p+1}.
\end{align}
Thus by \eqref{est.20} and \eqref{est.21} we obtain
$$\|\th f(\tuh)-f(\th u)\|_{l^{(p+2)'}(h\zz)}\leq h^s \|u\|_{W^{s,p+2}(\rr)}^{p+1}$$
which finishes the proof.
\end{proof}

 {\bf
Acknowledgements.}

The first author was partially supported by Grant PN-II-ID-PCE-2011-3-0075 of the Romanian National Authority for Scientific Research, CNCS--UEFISCDI, MTM2011-29306-C02-00, MICINN, Spain and ERC Advanced Grant FP7-246775 NUMERIWAVES.  The second author
was partially supported by Grant MTM2011-29306-C02-00, MICINN, Spain, ERC Advanced Grant FP7-246775 NUMERIWAVES, ESF Research Networking Programme OPTPDE and Grant PI2010-04 of the Basque Government.

This work was started when the authors were visiting the Isaac Newton Institute,
Cambridge, within the program "Highly Oscillatory Problems".
The authors also acknowledge this institution and Professor A. Iserles for their hospitality and support.

\end{document}